\newtheorem{thm}{Theorem}[section]
\newtheorem{lem}[thm]{Lemma}
\newtheorem{rem}[thm]{Remark}
\theoremstyle{definition}
\theoremstyle{remark}
\numberwithin{equation}{section}
\newcommand{\fr}{\kappa}
\newcommand{\Mod}[1]{\ (\textup{mod}\ #1)}
\DeclareMathOperator{\res}{res}
\providecommand{\sym}{\operatorname{sym}}
\begin{document}

\title[]{Bounds for a spectral exponential sum}

\author{Olga  Balkanova}
\address{Department of Mathematical Sciences, Chalmers University of Technology and University of Gothenburg, Chalmers tv\"{a}rgata 3, 412 58 Gothenburg, Sweden}
\email{olgabalkanova@gmail.com}

\author{Dmitry  Frolenkov}
\address{National Research University Higher School of Economics, Moscow, Russia and Steklov Mathematical Institute of Russian Academy of Sciences,  8 Gubkina st., Moscow, 119991, Russia}
\email{frolenkov@mi.ras.ru}

\begin{abstract}
We prove new upper bounds for a spectral exponential sum by refining the process by which one evaluates mean values of $L$-functions multiplied by an oscillating function.
In particular, we introduce a method which is capable of taking into consideration the oscillatory behaviour of the function.
This gives an improvement of the result of Luo and Sarnak when $T\geq X^{1/6+2\theta/3}$. Furthermore, this proves the conjecture of Petridis and Risager in certain ranges.

\end{abstract}

\keywords{spectral exponential sum; symmetric square L-functions; generalized Dirichlet L-functions; Prime Geodesic Theorem; Kuznetsov trace formula.}
\subjclass[2010]{Primary: 11F12}

\maketitle

\tableofcontents


\section{Introduction}
This paper presents a new approach to studying the spectral exponential sum
\begin{equation}\label{spec.exp.sum}
S(T,X)=\sum_{t_j\le T}X^{it_j},
\end{equation}
where $\kappa_j=1/4+t_{j}^{2}$ are the eigenvalues of the hyperbolic Laplacian for $PSL_2(\mathbf{Z})$.
The sum \eqref{spec.exp.sum} is attracting considerable interest because it is closely related to some classical problems, including prime geodesic theorem and hyperbolic lattice counting problem.
See, for example, \cite{CG}, \cite{IwPG}, \cite{LuoSarnakPG}, \cite{PetRis}, \cite{PetRisLaak}.

The prime geodesic theorem is concerned with the asymptotic behaviour of
\begin{equation*}
\pi_{\Gamma}(X)=\#\{\{P\}:NP\leq X\},
\end{equation*}
where $\{P\}$ denotes a primitive hyperbolic class in  $PSL_2(\mathbf{Z})$ and $NP$ denotes its norm.

By partial summation, the problem can be formulated in terms of  $$\Psi_{\Gamma}(X)=\sum_{NP\leq X}\Lambda(P),$$ where the sum is over all  hyperbolic classes and $\Lambda(P)=\log{NP_0}$ if $\{P\}$ is a power of the primitive hyperbolic class $\{P_0\}$.

Iwaniec \cite[Lemma 1]{IwPG} proved for $1\le T\le X^{1/2}\log^{-2}X$ the following explicit formula
\begin{equation}\label{PrimeGeodesic to spec.sum}
\Psi_{\Gamma}(X)=X+X^{1/2}\sum_{|t_j|\le T}\frac{X^{it_j}}{1/2+it_j}+O\left(\frac{X}{T}\log^2X\right),
\end{equation}
which is closely related to the spectral exponential sum \eqref{spec.exp.sum}.

The trivial bound on \eqref{spec.exp.sum}, namely $$S(T,X)\ll T^{2},$$
follows from Weyl's law and yields
\begin{equation}\label{eq:prgeotrivb}
\Psi_{\Gamma}(X)=X+O(X^{3/4}\log{X}).
\end{equation}

In order to improve \eqref{eq:prgeotrivb} it is required to exploit cancellation in sum \eqref{spec.exp.sum}.
There are several non-trivial estimates in the literature. The first  estimate
\begin{equation}\label{Iwaniec bound}
S(T,X)\ll TX^{11/48+\epsilon}
\end{equation}
was obtained by Iwaniec in \cite{IwPG}. As a consequence, he showed that $$\Psi_{\Gamma}(X)=X+O(X^{35/48+\epsilon}).$$
Luo and Sarnak  \cite{LuoSarnakPG} proved another estimate
\begin{equation}\label{LuoSarnak bound}
S(T,X)\ll T^{5/4}X^{1/8}\log^2T
\end{equation}
and deduced from this that $$\Psi_{\Gamma}(X)=X+O(X^{7/10+\epsilon}).$$

In \cite[Eq. (7.1)]{Cai} Cai showed that
\begin{equation}\label{eq:caises}
S(T,X)\ll (TX)^{\epsilon}(T^{2/5}X^{11/30}+T^{3/2}),
\end{equation}
and consequently,
$$\Psi_{\Gamma}(X)=X+O(X^{71/102+\epsilon}).$$
Finally, the prime geodesic theorem in the strongest presently known form $$\Psi_{\Gamma}(X)=X+O(X^{2/3+\theta/6}) \text{ with } \theta=1/6+\epsilon,$$ was proved by Soundararajan and Young in \cite{SY}
by combining Luo-Sarnak's bound \eqref{LuoSarnak bound} and a new approach without recourse to the spectral exponential sum.

Petridis and Risager  \cite[Conj.  2.2]{PetRisLaak} conjectured square root cancellation in sum \eqref{spec.exp.sum}, namely
\begin{equation}\label{PetrisdisRisager conj}
S(T,X)\ll T(TX)^{\epsilon}.
\end{equation}
Furthermore, they showed that estimate \eqref{PetrisdisRisager conj} yields not only the best possible error term $O(X^{1/2+\epsilon})$  in the prime geodesic theorem, but also the best error term on average for the hyperbolic lattice problem. See \cite{PetRisLaak}  for more details.

In the appendix of \cite{PetRisLaak}, Laaksonen proved that the conjecture of Petridis and Risager is true for a fixed $X$ as $T\to\infty.$
Moreover, for
\begin{equation}\label{fr(X) def}
\fr(X):=\left\|X^{1/2}+X^{-1/2}\right\|, 
\end{equation}
 where  $\|x\|$  is the distance from $x$ to the nearest integer,
Laaksonen mentioned in \cite[Experimental Observation 2]{PetRisLaak} that $S(T,X)$ has a peak when $\fr(X)=0.$

The quantity $\fr(X)$ appears also in our result. Furthermore, our estimates depend on the parameter $\theta$, which denotes the subconvexity exponent in the conductor aspect for Dirichlet $L$-functions of real primitive characters. The current record is $\theta=1/6+\epsilon$ due to Conrey and Iwaniec \cite{CI}.


\begin{thm}\label{thm:spec.exp.sum new bound}
The following estimates hold:
\begin{equation}\label{spec.exp.sum new bound}
S(T,X)\ll \max\left(
X^{1/4+\theta/2}T^{1/2},
X^{\theta/2}T
\right)\log^{3}T;
\end{equation}
\begin{equation}\label{spec.exp.sum new bound2}
S(T,X)\ll T\log^{2}T\quad\hbox{if}\quad T>\frac{X^{1/2+7\theta/6}}{\fr(X)}.
\end{equation}
\end{thm}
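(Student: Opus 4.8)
The plan is to treat the two estimates in parallel, since both should emerge from a single analysis of the smoothed spectral sum after passing to symmetric square $L$-functions: the dichotomy in \eqref{spec.exp.sum new bound} and the clean bound \eqref{spec.exp.sum new bound2} will be two faces of the same stationary-phase computation, separated only by whether a certain resonance condition is met.

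First I would smooth the sharp cutoff $t_j\le T$. Choosing an even test function $h$ concentrated on $[-T,T]$ with rapidly decaying transform, the difference between $S(T,X)$ and the smooth sum $\sum_j h(t_j)X^{it_j}$ is controlled by counting eigenvalues in short windows via Weyl's law, and should be absorbed into the logarithmic factors. The genuine problem is therefore to estimate $\sum_j h(t_j)X^{it_j}$, which carries no arithmetic weight; so I would insert one. Using the Hoffstein--Lockhart relation $|\rho_j(1)|^2\asymp\cosh(\pi t_j)/L(\sym^2 u_j,1)$, the pure spectral sum rewrites as a mean value of $L(\sym^2 u_j,\cdot)$ against the oscillating factor $X^{it_j}$. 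Expanding $L(\sym^2 u_j,s)=\zeta(2s)\sum_n\lambda_j(n^2)n^{-s}$ and invoking an approximate functional equation localises the Hecke data to a range $n\ll T^{1+\epsilon}$, so that everything reduces to evaluating
$$\sum_j h(t_j)\,X^{it_j}\,\lambda_j(n^2)$$
for $n$ in this short range.

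Next I would apply the Kuznetsov trace formula to this twisted sum, the crucial feature being that the \emph{test function itself oscillates}, namely $h(t)X^{it}=h(t)e^{it\log X}$. On the geometric side one obtains the diagonal term together with sums of Kloosterman sums $S(n^2,1;c)$ weighted by the Bessel transform of $h(t)e^{it\log X}$. The heart of the argument, and the step I expect to be the main obstacle, is the asymptotic analysis of this transform: for Bessel functions of large imaginary order the phase combines with $t\log X$, and a stationary-phase analysis in $t$ produces a saddle precisely when $2\sqrt{n^2}/c$ resonates with $X^{1/2}+X^{-1/2}$. This is exactly why $\fr(X)=\|X^{1/2}+X^{-1/2}\|$ governs the size of the transform: the saddle contributes a main term only when $X^{1/2}+X^{-1/2}$ lies near one of the admissible frequencies $2n/c$, and otherwise the phase is never stationary and the contribution is negligible. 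Carrying this out with uniform control in $n$, $c$ and $T$ — correctly identifying the transition ranges and not losing powers of $T$ at the endpoints — is the technically demanding part. This is the ``method capable of taking into consideration the oscillatory behaviour'' advertised in the abstract.

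Finally I would collect the resonant terms. The quadratic structure of $\lambda_j(n^2)$, fed through the Kloosterman sums and the resonance condition, opens the $c$- and $n$-sums into character sums that assemble into values of Dirichlet $L$-functions of real primitive characters; estimating these by the subconvexity bound with exponent $\theta$ should yield the two terms of \eqref{spec.exp.sum new bound}, the term $X^{1/4+\theta/2}T^{1/2}$ coming from the resonant saddle and $X^{\theta/2}T$ from the generic regime (consistent with the crossover at $T\asymp X^{1/2}$ and with the improvement over \eqref{LuoSarnak bound} for $T\ge X^{1/6+2\theta/3}$). For \eqref{spec.exp.sum new bound2}, the point is that once $T>X^{1/2+7\theta/6}/\fr(X)$ the distance $\fr(X)$ is large enough, relative to the effective length of the $n$-sum and the height $T$, that the resonance $2n/c\approx X^{1/2}+X^{-1/2}$ is never met in the active range: the oscillating transform stays in its non-stationary regime, only the diagonal survives, and one recovers the conjectured square-root cancellation $S(T,X)\ll T\log^{2}T$.
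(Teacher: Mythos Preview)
Your strategy captures the right ingredients --- inserting symmetric-square weights, Kuznetsov, the resonance at $X^{1/2}+X^{-1/2}$, and subconvexity for real Dirichlet $L$-functions --- but the route the paper takes is structurally different. The paper does \emph{not} use the identity $\alpha_j L(\sym^2 u_j,1)=2$ directly; instead it follows Iwaniec and introduces an auxiliary average $\sum_n h(n)$ with $n\asymp N$, so that the residue of the Rankin zeta function produces the identity
\[
\sum_j X^{it_j}e^{-t_j/T}\;=\;\frac{C}{N}\,(\text{Kuznetsov part})\;-\;\frac{C}{N}\int_{(1/2)}\tilde h(s)\,M_1(s)\,ds,
\]
where $M_1(s)=\sum_j\alpha_j\hat\varphi(t_j)L(\sym^2 u_j,s)$ is a first moment on the \emph{critical line}. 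The Kuznetsov part is bounded as in Luo--Sarnak, contributing $N^{1/2}X^{1/4}T^{1/2}$; the moment $M_1(1/2+ir)$ is bounded via an \emph{exact formula} (analytic continuation of the Kloosterman side, Lerch zeta functional equation) in which the generalized Dirichlet series $\mathscr{L}_{n^2-4}(1/2+ir)$ appear explicitly and are estimated by the subconvexity bound $\ll n^{2\theta}$. The resonance you describe is not obtained by stationary phase on Bessel transforms but is visible directly in the closed-form weight $I(n/2)\asymp|1-n/(2c)|^{-3/2}$, $c=-i\cosh\beta$. Balancing the two pieces over the free parameter $N$ is what produces the exponent $\theta/2$ rather than $\theta$ in \eqref{spec.exp.sum new bound}; your scheme has no such free parameter, and an approximate functional equation at $s=1$ would bring in a dual sum at $s=0$ that you do not discuss.

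There is a concrete gap in your argument for \eqref{spec.exp.sum new bound2}. Passing from the smooth sum back to the sharp cutoff forces you (in the paper, via the representation $\sum_{t_j\le T}X^{it_j}=\int_{-1}^{1}\hat g(\xi)\sum_j (Xe^{-2\pi\xi})^{it_j}e^{-t_j/T}\,d\xi+O(T\log T)$) to integrate your bound over a family of parameters $X'=Xe^{-2\pi\xi}$. As $\xi$ ranges over $[-1,1]$, the quantity $\fr(X')$ vanishes at $\asymp X^{1/2}$ points, so your assertion that ``the resonance is never met in the active range'' is false after unsmoothing: the resonance \emph{is} met, for a set of $\xi$ of small but nonzero measure. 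The paper handles this by splitting the $\xi$-integral into neighbourhoods of the resonant points $Z_n=\bigl((n+\sqrt{n^2-4})/2\bigr)^2$ of width $\delta$ (where the cruder bound \eqref{spec.exp.sum new bound} is used) and their complement (where $\fr(X')\gg\delta$ and the good bound applies), and then optimizing $\delta=\fr(X)X^{-\theta/2}$; the exponent $7\theta/6$ in the hypothesis of \eqref{spec.exp.sum new bound2} comes precisely from this balancing. Your sketch omits this entire mechanism.
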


Theorem \ref{thm:spec.exp.sum new bound} shows that the conjecture of Petridis and Risager is true for $T>X^{1/2}$ under the Lindel\"{o}f conjecture, and  for $T>X^{1/2+7\theta/6} /\fr(X)$ unconditionally.

Furthermore, estimate \eqref{spec.exp.sum new bound} improves \eqref{LuoSarnak bound} when $T\geq X^{1/6+2\theta/3+\epsilon}$. Consequently, this result has an application to the prime geodesic theorem, as we now show.
Using summation by parts, \eqref{LuoSarnak bound} and  \eqref{spec.exp.sum new bound} we obtain
\begin{equation}\label{PrimeGeodesic bound}
\Psi_{\Gamma}(X)=X+O\left(X^{2/3+\theta/6}\log^3X\right).
\end{equation}
The error term in asymptotic formula \eqref{PrimeGeodesic bound} coincides with the one proved by Soundararajan and Young. Note that using \eqref{eq:caises} instead of \eqref{LuoSarnak bound} would not lead to further improvement.

The proof of Theorem \ref{thm:spec.exp.sum new bound} is based on the upper bound for the mean value of Maa{\ss} symmetric square $L$-functions on the critical line multiplied by $X^{it_j}$.

\begin{thm}\label{thm:momentoscill} Let $X\gg 1$ then for $s=1/2+ir, |r|\ll T^{\epsilon}$ one has
\begin{multline}\label{symsquare estimate0}
\sum_{j}\alpha_jX^{it_j}\exp(-t_j/T)L(\sym^2 u_{j},s)
\ll(1+|r|)^AT\log^3T+\\
\frac{(1+|r|)^AX^{\theta}}{X^{1/4}}\left(
X^{1/2}\min\left(T,\frac{X^{1/2}}{\fr(X)}\right)^{1/2}+
\min\left(T,\frac{X^{1/2}}{\fr(X)}\right)^{3/2}\right).
\end{multline}
\end{thm}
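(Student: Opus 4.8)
The plan is to evaluate the spectral average by combining an approximate functional equation for the symmetric square $L$-function with the Kuznetsov trace formula, and then to extract a genuine saving from the oscillation of $X^{it_j}$ by stationary phase. Since $L(\sym^2 u_j,s)$ has degree three and analytic conductor $\asymp(1+|r|)^{O(1)}(1+t_j)^2$ on the line $\Re s=1/2$, its approximate functional equation writes it as $\sum_{n\ll(1+t_j)^{1+\epsilon}}\lambda_j(n^2)n^{-s}V_r(n/(1+t_j))+(\text{dual})$, where $V_r$ is a smooth weight of rapid decay whose dependence on $t_j$ and $r$ is made explicit through a Mellin transform. Inserting this and interchanging the $n$- and $j$-summations reduces the problem to bounding, for each $n\ll T$, the twisted spectral sum $\mathcal S(n)=\sum_j\alpha_j\lambda_j(n^2)X^{it_j}\exp(-t_j/T)(\cdots)$, to which I would apply the Kuznetsov formula after writing $\lambda_j(n^2)=\lambda_j(n^2)\lambda_j(1)$.

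On the geometric side the diagonal survives only for $n=1$ and contributes $O((1+|r|)^{A})$, so the entire average is governed by the Kloosterman term $\sum_{n}n^{-s}(\cdots)\sum_c c^{-1}S(n^2,1;c)\,H_r(4\pi n/c)$, where $H_r$ is the Bessel transform of $X^{it}\exp(-t/T)$. The decisive step is the analysis of $H_r(x)$: the phase of $X^{it}J_{2it}(x)$ has a stationary point in $t$ only when $x=4\pi n/c$ is near the resonant value determined by $X^{1/2}+X^{-1/2}$, which forces the modulus $c$ into a short interval and confines the eigenvalue parameter to $t_j\ll T$. This is precisely the mechanism by which the oscillation is exploited, and it explains the appearance of $\fr(X)=\|X^{1/2}+X^{-1/2}\|$: the resonance identifies $X^{1/2}+X^{-1/2}$ with a would‑be geodesic trace (note $\arcosh(\tfrac12(X^{1/2}+X^{-1/2}))=\tfrac12\log X$), so after stationary phase the resonant sum behaves like a geometric progression summable via $\sum_{\ell\le L}e(\ell\beta)\ll\min(L,\|\beta\|^{-1})$ with $\|\beta\|\asymp\fr(X)$; its effective length is therefore $\asymp\min(T,X^{1/2}/\fr(X))$, the cutoff at $T$ taking over once $\fr(X)$ is so small that the stationary point escapes the admissible range. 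The complementary, non‑resonant portion of the $(n,c)$‑sum carries no stationary point, is estimated by repeated integration by parts, and should yield the first term $(1+|r|)^{A}T\log^{3}T$.

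For the resonant portion I would open the Kloosterman sums $S(n^2,1;c)$ and perform the $c$‑summation over the short interval; through the quadratic structure of these sums the average collapses onto values of generalized Dirichlet $L$-functions $L(1/2,\chi)$ attached to real primitive characters of conductor $\asymp X$. Applying the Conrey--Iwaniec subconvexity bound with exponent $\theta$ produces the factor $X^{\theta}$, while the normalisation of $H_r$ at the stationary point contributes the weight $X^{-1/4}$; summing over the two stationary‑phase regimes (according to whether the relevant modulus is below or above $X^{1/2}$) over the resonant ranges of length $M=\min(T,X^{1/2}/\fr(X))$ assembles the two terms $X^{1/2}M^{1/2}$ and $M^{3/2}$, and hence the second line of the stated bound.

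I expect the principal obstacle to be the uniform stationary‑phase treatment of the oscillatory transform $H_r(4\pi n/c)$: one must locate the stationary point and measure the width of the resonant interval in terms of $\fr(X)$ uniformly in $n$, $c$ and $r$, and control the transition across $T=X^{1/2}/\fr(X)$ without forfeiting the square‑root saving. A closely related difficulty is casting the $c$‑sum of Kloosterman sums as an average of Dirichlet $L$-functions whose conductor is genuinely of size $X$, so that subconvexity delivers $X^{\theta}$ rather than a weaker power. Once these are in place, bounding the dual term of the approximate functional equation and the continuous‑spectrum contribution by the same quantities is routine.
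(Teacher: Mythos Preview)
Your plan diverges substantially from the paper's argument. The paper does not use an approximate functional equation or stationary phase at all: it works with the full Dirichlet series for $\Re s>3/2$, applies Kuznetsov with the explicit test function $\varphi(x)=\frac{\sinh\beta}{\pi}\,x\exp(ix\cosh\beta)$ (whose spectral transform equals $X^{it}e^{-t/T}+O(e^{-\pi t})$), and then converts the Kloosterman piece $\sum_q q^{-1}S(1,n^2;q)\,\varphi(4\pi n/q)$ into a sum $\sum_n n^{s-1}\mathscr{L}_{n^2-4}(s)\,I(n/2)$ via the functional equation of the Lerch zeta function, producing an exact formula that is analytically continued to $\Re s=1/2$. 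The appearance of $\fr(X)$ is not through a geometric progression: the special function $I(x)$ is explicit and has a single peak of height $\asymp T^{3/2}$ and width $\asymp |c|/T$ centred at $x=|c|\approx\tfrac12(X^{1/2}+X^{-1/2})$, so $\min(T,X^{1/2}/\fr(X))$ simply records whether the nearest integer to $2|c|$ lies inside that peak. The two terms $X^{1/2}M^{1/2}$ and $M^{3/2}$ are, respectively, the integral of $|I|$ over $n$ and the pointwise maximum at the nearest integer---not two stationary-phase regimes.

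As written, your route has genuine gaps. First, your approximate functional equation truncates at $n\ll(1+t_j)^{1+\epsilon}$, so the weight depends on $t_j$; separating this (say by Mellin inversion) alters the test function entering Kuznetsov, and you have not said how the resulting Bessel transform still localises to produce the claimed resonance structure. Second, the step ``open $S(n^2,1;c)$ over a short $c$-interval and collapse to Dirichlet $L$-functions of conductor $\asymp X$'' is not an available identity: in the paper this collapse is achieved globally, via the Lerch functional equation together with a combinatorial identity for $\sum_{d\bmod q}S(1,d^2;q)e(nd/q)$, and it is precisely this that yields $\mathscr{L}_{n^2-4}(s)$ with the shifted discriminants $n^2-4$. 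Extracting the same from a short $c$-sum would require an additional Poisson or Voronoi step you have not supplied. Third, your heuristic $\sum_{\ell\le L}e(\ell\beta)\ll\min(L,\|\beta\|^{-1})$ does not match the actual mechanism and would not on its own produce the exponents $1/2$ and $3/2$ on $M$. Until these points are made precise, the proposal does not yet establish the stated bound.
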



The paper is organised as follows. In Section \ref{sec:introduction} we introduce the  required notation. In Section \ref{sec:initial steps}  initial steps of the proof are described following the paper of Iwaniec \cite{IwPG}.
Sections \ref{sec:exactformula}, \ref{sec:sigma1}, \ref{sec:sigma2} are devoted to proving an exact formula for the first moment of Maa{\ss} symmetric square $L$-functions multiplied by an oscillating function.
In Section \ref{sec:specialfunctions} we analyse special functions that appear in the exact formula. Consequently, we prove Theorem \ref{thm:momentoscill} in Section \ref{sec:estimatesmoment}.
The main result, namely Theorem \ref{thm:spec.exp.sum new bound}, is proved in Section \ref{sec:estimatessum}.


\section{Notation}\label{sec:introduction}

For a complex number $ v $ let
\begin{equation*}
\tau_v(n)=\sum_{n_1n_2=n}\left( \frac{n_1}{n_2}\right)^v=n^{-v}\sigma_{2v}(n),
\end{equation*}
where
\begin{equation*}\sigma_v(n)=\sum_{d|n}d^v.
\end{equation*}

Note that
\begin{equation}\label{series with tau}
\sum_{n=1}^{\infty}\frac{\tau_{ir}(n^2)}{n^s}=\frac{\zeta(s)\zeta(s+2ir)\zeta(s-2ir)}{\zeta(2s)},
\end{equation}
where $\zeta(s)$ is the Riemann zeta function.

Define the Mellin transform of  $f(x)$ as follows
\begin{equation*}
\tilde{f}(s)=\int_0^{\infty}f(x)x^{s-1}dx.
\end{equation*}

 The  Kloosterman sum is given by
\begin{equation*}
S(n,m;c)=\sum_{\substack{a\pmod{c}\\ (a,c)=1}}e\left( \frac{an+a^*m}{c}\right), \quad aa^*\equiv 1\pmod{c},
\end{equation*}
where $e(x)=exp(2\pi ix)$.
According to the Weil bound (see \cite{W})
\begin{equation}\label{Weilbound}
|S(m,n;c)|\leq \tau_0(c)\sqrt{(m,n,c)}\sqrt{c}.
\end{equation}

The Lerch zeta function
\begin{equation}
\zeta(\alpha,\beta,s)=\sum_{n+\alpha>0}\frac{e(n\beta)}{(n+\alpha)^s}
\end{equation}
satisfies the functional equation (see \cite{Ler})
\begin{equation}\label{LerchFE}
\zeta(\alpha,0,s)=\frac{\Gamma(1-s)}{(2\pi)^{1-s}}\biggl(
ie\left(-\frac{s}{4}\right)\zeta(0,-\alpha,1-s)-ie\left(\frac{s}{4}\right)\zeta(0,\alpha,1-s)\biggr).
\end{equation}

Introduce the generalized Dirichlet $L$-function (see \cite{B,Z} for details)
\begin{equation}\label{Lbyk}
\mathscr{L}_{n}(s)=\frac{\zeta(2s)}{\zeta(s)}\sum_{q=1}^{\infty}\frac{\rho_q(n)}{q^{s}}=\sum_{q=1}^{\infty}\frac{\lambda_q(n)}{q^{s}},
\end{equation}
where
\begin{equation}
\rho_q(n):=\#\{x\Mod{2q}:x^2\equiv n\Mod{4q}\},
\end{equation}
\begin{equation}
\lambda_q(n):=\sum_{q_{1}^{2}q_2q_3=q}\mu(q_2)\rho_{q_3}(n).
\end{equation}
For $n\neq 0$ one has
\begin{equation}\label{eq:subconvexity}
\mathscr{L}_n(1/2+it)\ll n^{\theta}(1+|t|)^{A},
\end{equation}
where $\theta$ and $A$ are subconvexity exponents for Dirichlet $L$-functions of real primitive characters. The best known result in the $q$ aspect, namely $\theta=1/6+\epsilon$, was first obtained by Conrey and Iwaniec \cite{CI}.
Young \cite{Y} proved the hybrid subconvexity bound with $A=\theta=1/6+\epsilon$.

Denote by $\{u_j\}$  the orthonormal basis of the space of Maa{\ss} cusp forms consisting of common eigenfunctions of all Hecke operators and the hyperbolic Laplacian. Let
$\{\lambda_{j}(n)\}$ be the eigenvalues of Hecke operators acting on $u_{j}$. Let $\kappa_{j}=1/4+t_{j}^2$ be the eigenvalues of the hyperbolic Laplacian acting on $u_{j}$.
Elements of the basis have a Fourier expansion of the following form
\begin{equation*}
u_{j}(x+iy)=\sqrt{y}\sum_{n\neq 0}\rho_{j}(n)K_{it_j}(2\pi|n|y)e(nx),
\end{equation*}
where $K_{\alpha}(x)$ is the $K$-Bessel function and
\begin{equation*}
\rho_{j}(n)=\rho_{j}(1)\lambda_{j}(n).
\end{equation*}
Note that for $n,m \geq 1$
\begin{equation}\label{eq:multipFourcoeff2}
\lambda_{j}(n)\lambda_{j}(m)=\sum_{d|(m,n)}\lambda_{j}\left( \frac{nm}{d^2}\right).
\end{equation}

Introduce the normalizing coefficient
\begin{equation}\label{alphaj}
\alpha_{j}:=\frac{|\rho_{j}(1)|^2}{\cosh{\pi t_j}}.
\end{equation}

For $\Re{s}>1$, the symmetric square $L$-function is given by
\begin{equation}
L(\sym^2 u_{j},s):=\zeta(2s)\sum_{n=1}^{\infty}\frac{\lambda_{j}(n^2)}{n^s}.
\end{equation}

For $\Re{s}>1$ define the Rankin zeta-function
\begin{equation}\label{Rankin zeta}
L(u_{j}\otimes u_{j},s):=\sum_{n=1}^{\infty}\frac{|\rho_{j}(n)|^2}{n^s}.
\end{equation}

We will use the following properties (see \cite[p. 216]{Luo} and \cite[proof of Lemma 8]{IwPG})
\begin{equation}\label{residue of Rankin zeta}
\res_{s=1} L(u_{j}\otimes u_{j},s)=\frac{2}{\zeta(2)}\cosh(\pi t_j),
\end{equation}
\begin{equation}\label{Rankin to symsquare}
L(u_{j}\otimes u_{j},s)=|\rho_{j}(1)|^2\frac{\zeta(s)}{\zeta(2s)}L(\sym^2 u_{j},s).
\end{equation}
Let $\varphi(x)$ be a smooth function on $[0,\infty)$ such that
\begin{equation*}
\varphi(0)=0,\quad \varphi^{(j)}(x)\ll(1+x)^{-2-\epsilon}, \quad j=0,1,2.
\end{equation*}
Let $J_{\nu}(x)$ be the $J$-Bessel function. Define the series of integral transforms
\begin{equation}\label{phi0-Def}
\varphi_0=\frac{1}{2\pi}\int_0^{\infty}J_0(y)\varphi(y)dy,
\end{equation}

\begin{equation}\label{phiB-Def}
\varphi_B(x)=\int_0^{1}\int_0^{\infty}\xi xJ_0(\xi x)J_0(\xi y)\varphi(y)dyd\xi,
\end{equation}

\begin{equation}\label{phiH-Def}
\varphi_H(x)=\int_1^{\infty}\int_0^{\infty}\xi xJ_0(\xi x)J_0(\xi y)\varphi(y)dyd\xi,
\end{equation}

\begin{equation}\label{phiHat-Def}
\hat{\varphi}(t)=\frac{\pi i}{2\sinh(\pi t)}\int_0^{\infty}(J_{2it}(x)-J_{-2it}(x))\varphi(x)\frac{dx}{x},
\end{equation}

\begin{equation}\label{phiCheck-Def}
\check{\varphi}(l)=\int_0^{\infty}J_l(y)\varphi(y)\frac{dy}{y}.
\end{equation}

The following decomposition holds

\begin{equation}\label{phi=phiB+phiH}
\varphi(x)=\varphi_H(x)+\varphi_B(x),
\end{equation}
where
\begin{equation}\label{phiB series Def}
\varphi_B(x)=\sum_{l\equiv1(2)}2l\check{\varphi}(l)J_l(x).
\end{equation}
\begin{lem}(Kuznetsov trace formula)
For all $m,n \geq 1$
\begin{multline}\label{eq:KuzTrForm}
\sum_{j=1}^{\infty}\alpha_j\lambda_j(m)\lambda_j(n)\hat{\varphi}(t_j)+
\frac{2}{\pi}\int_{0}^{\infty}
\frac{\tau_{ir}(m)\tau_{ir}(n)}{|\zeta(1+2ir)|^2}\hat{\varphi}(r)dr=\\
\delta(m,n)\varphi_0+
\sum_{q=1}^{\infty}\frac{S(m,n;q)}{q}\varphi_H\left( \frac{4\pi \sqrt{mn}}{q}\right).
\end{multline}
\end{lem}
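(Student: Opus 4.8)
The plan is to realise both sides of \eqref{eq:KuzTrForm} as two evaluations of the inner product of a pair of Poincar\'e series, following the classical arguments of Kuznetsov and Iwaniec. For $m\geq 1$ and $\Re s>1$ I would introduce the Poincar\'e series attached to the cusp at infinity,
\[
P_m(z,s)=\sum_{\gamma\in\Gamma_\infty\backslash\Gamma}(\Im\gamma z)^s\,e(m\,\Re\gamma z),
\]
and assemble from the family $\{P_m(\cdot,s)\}$ a single Poincar\'e series whose radial profile is dictated by $\varphi$, by taking a suitable superposition in $s$. Such a series is square-integrable on $\Gamma\backslash\HH$, and the decay hypotheses $\varphi^{(j)}(x)\ll(1+x)^{-2-\epsilon}$ together with the standard uniform bounds on $J$-Bessel functions and on spectral averages justify both the meromorphic continuation in $s$ and every interchange of the $q$-summation with the spectral expansion.

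First I would compute $\langle P_m, P_n\rangle$ geometrically by unfolding one copy against a fundamental domain and writing $\Gamma_\infty\backslash\Gamma/\Gamma_\infty$ in Bruhat form. The identity (diagonal) double coset contributes $\delta(m,n)\varphi_0$ once its weight is identified through \eqref{phi0-Def}; each nontrivial cell of modulus $q$ contributes the Kloosterman sum $S(m,n;q)$ against a Bessel integral in the Bruhat coordinate that, after the substitution $x=4\pi\sqrt{mn}/q$, reproduces the transform \eqref{phiH-Def}. This is precisely where the decomposition \eqref{phi=phiB+phiH} intervenes: the kernel produced by the unfolding is the full $\varphi$, whereas the spectral side below sees only Maa{\ss} forms and Eisenstein series. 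The difference is carried by the odd-order Neumann series $\varphi_B=\sum_{l\equiv 1(2)}2l\check\varphi(l)J_l$, which by Petersson's formula corresponds exactly to the holomorphic cusp forms of weight $l+1$; subtracting it leaves $\varphi_H$ on the geometric side.

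Second I would expand the Poincar\'e series spectrally in the orthonormal basis $\{u_j\}$ together with the Eisenstein series $E(z,1/2+ir)$. The projection onto $u_j$ unfolds to a single integral of the radial profile against the $K$-Bessel Fourier coefficient, producing $\rho_j(m)$ times a Gamma factor; the symmetric appearance of the projection of $P_n$ then yields $|\rho_j(1)|^2\lambda_j(m)\lambda_j(n)=\alpha_j\lambda_j(m)\lambda_j(n)$ by \eqref{alphaj}, weighted by the transform that becomes $\hat\varphi(t_j)$ of \eqref{phiHat-Def}. The continuous spectrum contributes the analogous expression with the Eisenstein Fourier coefficients $\tau_{ir}(m)\tau_{ir}(n)/|\zeta(1+2ir)|^2$ against $\hat\varphi(r)$. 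Equating the geometric and spectral evaluations gives the identity \eqref{eq:KuzTrForm}.

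The main obstacle is not the bookkeeping but the two Bessel-transform identifications and their compatibility. Concretely, one must show that the Weber--Schafheitlin type integral emerging from the unfolding coincides, after analytic continuation, with the transforms defining $\varphi_H$, and that the continuous spectral transform $\hat\varphi$ alone does \emph{not} reconstruct $\varphi$: by the Sears--Titchmarsh inversion underlying \eqref{phi=phiB+phiH}, the defect is exactly the discrete part $\varphi_B$ attached to odd Bessel indices, i.e. to holomorphic forms. Ensuring that this defect is precisely what is removed in passing from $\varphi$ to $\varphi_H$ on the geometric side is the delicate point; rather than recompute these classical kernel identities I would invoke Kuznetsov's original evaluation and the completeness of the Neumann-series inversion.
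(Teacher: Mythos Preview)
The paper does not prove this lemma: its entire proof is the sentence ``See \cite{Iwbook} or \cite{Kuz}.'' The Kuznetsov trace formula is being quoted as a known result, not established. Your sketch, by contrast, outlines the classical derivation via the inner product of two Poincar\'e series, which is precisely the approach taken in the references the paper cites; in that sense your proposal is consonant with the paper, just far more detailed. The identification of $\varphi_B$ with the holomorphic contribution (via Petersson) and of $\varphi_H$ as what remains after its removal is the correct way to arrive at the specific form \eqref{eq:KuzTrForm}, and matches the Sears--Titchmarsh decomposition \eqref{phi=phiB+phiH} used here.

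One small caution: in practice the argument is usually run not with $P_m(z,s)$ at variable $s$ and a superposition, but either with a Poincar\'e-type series built directly from a radial test function (as in \cite{Iwbook}) or by computing $\langle P_m(\cdot,s_1),P_n(\cdot,s_2)\rangle$ at two fixed spectral parameters and then integrating against a weight to synthesise $\hat\varphi$. Your phrasing ``taking a suitable superposition in $s$'' is a bit vague on this point, and the analytic continuation needed to reach $\Re s=1/2$ requires care with the continuous spectrum; but since the paper itself defers entirely to the literature, this is a matter of presentation rather than a gap.
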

\begin{proof}
See \cite{Iwbook} or \cite{Kuz} .
\end{proof}
Following \cite{DesIw} and \cite{LuoSarnakPG}\footnote{Note that there is a typo (the imaginary unit $i$ is placed in the denominator instead of the numerator) in \cite[p.68 line -1]{DesIw} and \cite[p.233 line -3]{LuoSarnakPG} in the definition of $\hat{\varphi}(t)$. See \eqref{phiHat-Def} for the corrected version. This explains the sign change in the definition of $\varphi(x)$.} let
\begin{equation}\label{phi def}
\varphi(x)=\frac{\sinh\beta}{\pi}x\exp(ix\cosh\beta)
\end{equation}
with
\begin{equation}\label{beta def}
\beta=\frac{1}{2}\log X+\frac{i}{2T}.
\end{equation}
It is useful to introduce the following  notation
\begin{equation}\label{c def}
c:=-i\cosh\beta=a-ib,
\end{equation}
\begin{equation}\label{a,b def}
\begin{cases}
a:=\sinh(\log\sqrt{X})\sin((2T)^{-1});\\
b:=\cosh(\log\sqrt{X})\cos((2T)^{-1}).
\end{cases}
\end{equation}
Note that
\begin{equation}\label{argc}
\arg{c}=-\pi/2+\gamma,\quad 0<\gamma\ll T^{-1}.
\end{equation}

\begin{lem}
The following holds
\begin{equation}\label{phihat expression}
\hat{\varphi}(t)=\frac{\sinh(\pi t+2i\beta t)}{\sinh(\pi t)}=X^{it}\exp(-t/T)+O(\exp(-\pi t)),
\end{equation}
\begin{equation}\label{phi0 expression}
\varphi_0=\frac{-\cosh\beta}{2\pi^2\sinh^2\beta},
\end{equation}
\begin{equation}\label{phiBbound}
\varphi_B(x)\ll X^{-1/2}\min(x,x^{1/2}),
\end{equation}
\begin{equation}\label{phiBseries}
\varphi_B(x)=\frac{2}{\pi}\sum_{k=1}^{\infty}(-1)^k (2k-1)\exp(-(2k-1)\beta)J_{2k-1}(x).
\end{equation}
\end{lem}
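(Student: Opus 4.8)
The plan is to reduce all four formulas to the single Laplace-type evaluation
$$\int_0^\infty J_\nu(y)e^{-cy}\,dy=\frac{(\sqrt{1+c^2}-c)^\nu}{\sqrt{1+c^2}},\qquad\Re\nu>-1,$$
a standard integral (e.g.\ Gradshteyn--Ryzhik) which converges here because $\Re c=a>0$ by \eqref{a,b def}. First I would rewrite \eqref{phi def} as $\varphi(y)=\pi^{-1}\sinh\beta\,y\,e^{-cy}$, using $\cosh\beta=ic$ from \eqref{c def} so that $\exp(iy\cosh\beta)=\exp(-cy)$. The crucial preliminary step is to fix the branch of the square root. Since $1+c^2=1-\cosh^2\beta=-\sinh^2\beta$, the two candidate values are $\pm i\sinh\beta$; matching the asymptotic $\sqrt{1+c^2}\sim c$ as $\Re c\to+\infty$ against \eqref{argc} (both arguments being $-\pi/2+O(T^{-1})$) forces the choice $\sqrt{1+c^2}=-i\sinh\beta$, and correspondingly $\sqrt{1+c^2}-c=i(\cosh\beta-\sinh\beta)=ie^{-\beta}$.

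With this in hand, $\hat\varphi(t)$ follows by inserting $\nu=\pm2it$ into \eqref{phiHat-Def}. The factors $(ie^{-\beta})^{\pm2it}$ produce $e^{\mp\pi t}e^{\mp2it\beta}$ via $i^{\pm2it}=e^{\mp\pi t}$, the $\sinh\beta$ cancels against the prefactor, and collecting terms yields the combination $e^{\pi t+2it\beta}-e^{-\pi t-2it\beta}$, hence \eqref{phihat expression}. The stated asymptotic is then read off by dividing numerator and denominator by $e^{\pi t}$: since $e^{2it\beta}=X^{it}e^{-t/T}$ by \eqref{beta def}, expanding $(1-e^{-2\pi t})^{-1}$ and bounding the term $e^{-2\pi t-2it\beta}$ gives the error $O(e^{-\pi t})$. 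Similarly, $\varphi_0$ in \eqref{phi0-Def} requires $\int_0^\infty yJ_0(y)e^{-cy}\,dy$, which I obtain by differentiating the $\nu=0$ transform in $c$, yielding $c(1+c^2)^{-3/2}$; substituting $1+c^2=-\sinh^2\beta$ and $c=-i\cosh\beta$ in the branch fixed above gives \eqref{phi0 expression}.

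For the series \eqref{phiBseries} I would start from the decomposition \eqref{phiB series Def}, so that only $\check\varphi(l)$ in \eqref{phiCheck-Def} must be computed; this is again the master integral with $\nu=l$, giving $\check\varphi(l)=\pi^{-1}i^{l+1}e^{-l\beta}$, and setting $l=2k-1$ turns $i^{l+1}=i^{2k}$ into $(-1)^k$, which reproduces \eqref{phiBseries}. Finally, for the bound \eqref{phiBbound} I would work instead from the integral representation \eqref{phiB-Def}: carrying out the $y$-integral via $\int_0^\infty yJ_0(\xi y)e^{-cy}\,dy=c(c^2+\xi^2)^{-3/2}$ leaves $\varphi_B(x)=\pi^{-1}(\sinh\beta)\,c\,x\int_0^1\xi J_0(\xi x)(c^2+\xi^2)^{-3/2}\,d\xi$. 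Since $|\sinh\beta|,|c|\asymp X^{1/2}$ and $|c^2+\xi^2|\gg X$ uniformly for $\xi\in[0,1]$, the prefactor is $O(X^{-1/2})$, and substituting $u=\xi x$ reduces the remaining integral to $x^{-1}\int_0^x u|J_0(u)|\,du$, which is $O(x)$ for $x\le1$ and $O(x^{1/2})$ for $x\ge1$ by $J_0(u)\ll\min(1,u^{-1/2})$; this gives \eqref{phiBbound}.

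The main obstacle I anticipate is not any single calculation but pinning down the branch of $\sqrt{1+c^2}$ consistently: every one of the four formulas hinges on the sign choice $-i\sinh\beta$ rather than $+i\sinh\beta$, and an error here flips signs throughout. This is precisely the source of the sign subtlety flagged in the footnote after \eqref{phi def}, so I would justify the branch carefully using \eqref{argc} before evaluating any of the transforms.
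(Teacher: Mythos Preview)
Your proposal is correct and follows essentially the same route as the paper: both reduce everything to the Laplace transform $\int_0^\infty J_\nu(y)e^{-cy}\,dy$ (the paper reaches the equivalent closed form via \cite[Eq.~6.6621.1]{GR} and \cite[Eq.~15.4.18]{HMF}), obtain $\varphi_0$ by differentiating that identity (in $\beta$ rather than in $c$, which is the same computation up to the chain rule), derive \eqref{phiBseries} from \eqref{phiB series Def}, and bound $\varphi_B$ by evaluating the $y$-integral in \eqref{phiB-Def} and estimating the remaining $\xi$-integral via $|J_0(u)|\ll\min(1,u^{-1/2})$. Your explicit justification of the branch $\sqrt{1+c^2}=-i\sinh\beta$ is a welcome addition, since this is precisely the point where sign errors have propagated in the literature (cf.\ the footnotes surrounding \eqref{phi def} and \eqref{integral J2it exp}).
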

\begin{proof}
It follows from \cite[Eq. 6.6621.1]{GR}, \cite[Eq. 15.4.18]{HMF} that
\begin{equation}\label{integral J2it exp}
\int_0^{\infty}J_{2it}(x)\exp(ix\cosh\beta)dx=-\frac{\exp(-(\pi+2i\beta)t)}{i\sinh\beta}.
\footnote{Note that there is a typo in \cite[Eq. 7.7]{DesIw}: the minus sign is missed. For this reason, the formula \eqref{integral J0 exp} has an additional minus sign compared to \cite[Eq. 7.8]{DesIw}. Consequently, we obtain the same expression \eqref{phi0 expression} for $\varphi_0$  as in \cite[Eq. 7.8]{DesIw} regardless of the sign change in the definition of $\varphi(x)$. }
\end{equation}
Using \eqref{phiHat-Def}, \eqref{phi def} and \eqref{integral J2it exp} we prove \eqref{phihat expression}. Differentiating equation \eqref{integral J2it exp} with respect to $\beta$ and taking $t=0$ we obtain
\begin{equation}\label{integral J0 exp}
\int_0^{\infty}J_{0}(x)\exp(ix\cosh\beta)xdx=-\frac{\cosh\beta}{\sinh\beta}.
\end{equation}
Now \eqref{phi0 expression} follows from \eqref{phi0-Def},\eqref{phi def} and \eqref{integral J0 exp}.
Estimate \eqref{phiBbound} can be proved in the same way as \cite[Lemma 11]{DesIw}. Namely,
\begin{equation*}
\varphi_B(x)\ll |\sinh(2\beta)|\int_0^1\frac{\min(\xi x,\sqrt{\xi x})}{|\cosh^2\beta-\xi^2|^{3/2}}d\xi\ll
X^{-1/2}\min(x,x^{1/2}).
\end{equation*}
To prove \eqref{phiBseries} it is required to substitute \eqref{phi def} to \eqref{phiB series Def} and use \cite[Eq. 6.6621.1]{GR}, \cite[Eq. 15.4.18]{HMF}.
\end{proof}

\section{Initial steps of the proof}\label{sec:initial steps}
To prove Theorem \ref{thm:spec.exp.sum new bound} we follow the approach of Iwaniec \cite{IwPG}. See also \cite[Section 6]{LuoSarnakPG} for more details.
First of all, the problem can be reduced to the analysis of the sum
\begin{equation}\label{spec.exp.sum.smooth}
\sum_{j}X^{it_j}\exp(-t_j/T)
\end{equation}
at the cost of the error $O(T\log T).$ To this end, we introduce a smooth function $g(x)$   such that $g(x)=1$ for $1\le x\le T$ and
$g(x)=0$ for $x\le1/2$ and $x\ge T+1/2.$ Then
\begin{equation*}
\sum_{t_j\le T}X^{it_j}=\sum_{j}X^{it_j}\exp(-t_j/T)g(t_j)\exp(t_j/T)+O(T).
\end{equation*}
Let
\begin{equation*}
\hat{g}(x)=\int_{-\infty}^{\infty}g(\xi)\exp(\xi/T)e(x\xi)d\xi.
\end{equation*}
According to \cite[Section 6]{LuoSarnakPG}
\begin{equation}\label{g(x) estimate}
\hat{g}(x)\ll\min(T,1/|x|).
\end{equation}

Finally we have the following lemma (see \cite[Section 6]{LuoSarnakPG}).

\begin{lem} One has
\begin{equation}\label{spec.sum to spec.sum.exp}
\sum_{t_j\le T}X^{it_j}=
\int_{-1}^1\hat{g}(\xi)\left(\sum_{j}(X\exp(-2\pi\xi))^{it_j}\exp(-t_j/T)\right)d\xi+O(T\log T).
\end{equation}
\end{lem}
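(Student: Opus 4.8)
The plan is to realise the cutoff $g(t_j)$ by Fourier inversion and then truncate the resulting frequency integral to $[-1,1]$. Starting from the reduction already recorded,
$$\sum_{t_j\le T}X^{it_j}=\sum_{j}X^{it_j}\exp(-t_j/T)\,g(t_j)\exp(t_j/T)+O(T),$$
I would invert the definition of $\hat g$, which is the Fourier transform of the smooth, compactly supported function $g(x)\exp(x/T)$: this gives $g(t_j)\exp(t_j/T)=\int_{\R}\hat g(\xi)e(-\xi t_j)\,d\xi$. Multiplying by $X^{it_j}\exp(-t_j/T)$, using $X^{it_j}e(-\xi t_j)=(X\exp(-2\pi\xi))^{it_j}$, and summing over $j$, I obtain
$$\sum_{t_j\le T}X^{it_j}=\int_{\R}\hat g(\xi)\Big(\sum_{j}(X\exp(-2\pi\xi))^{it_j}\exp(-t_j/T)\Big)d\xi+O(T).$$
The interchange of sum and integral is justified by the rapid decay $\hat g(\xi)\ll_N|\xi|^{-N}$ coming from the smoothness of $g$, together with $\sum_j\exp(-t_j/T)\ll T^2$.

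It remains to replace $\int_{\R}$ by $\int_{-1}^{1}$, that is, to bound the tail
$$E:=\int_{|\xi|>1}\hat g(\xi)\Big(\sum_{j}(X\exp(-2\pi\xi))^{it_j}\exp(-t_j/T)\Big)d\xi=\sum_{j}X^{it_j}\exp(-t_j/T)R(t_j),$$
where $R(t):=\int_{|\xi|>1}\hat g(\xi)e(-\xi t)\,d\xi$ is the error in reconstructing $g(t)\exp(t/T)$ from the frequencies $|\xi|\le1$. Since $g(x)\exp(x/T)$ varies slowly on the bulk $1\le x\le T$ and changes appreciably only across the two transition windows of width $O(1)$ about $x\approx1$ and $x\approx T$, I expect $R(t)\ll1/|t|$, together with an extra piece localised near the upper cutoff $t\approx T$. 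Making this precise, one integration by parts in $\xi$ yields $R(t)=O(1/|t|)+\frac{1}{2\pi i t}\int_{|\xi|>1}\hat g'(\xi)e(-\xi t)\,d\xi$; for $|\xi|>1$ the transform $\hat g'(\xi)$ is dominated by the edge $x\approx T$, so it is of the shape $e(\xi T)$ times a smooth factor of size $\ll T/\xi^{2}$, and hence $\int_{|\xi|>1}\hat g'(\xi)e(-\xi t)\,d\xi\ll T\min(1,1/|t-T|)$. Altogether $R(t)\ll1/|t|+\frac{T}{|t|}\min(1,1/|t-T|)$.

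Inserting this into $|E|\le\sum_j\exp(-t_j/T)|R(t_j)|$ and summing by Weyl's law, whose counting density is $\sim t/6$, the first term contributes $\sum_j\exp(-t_j/T)/t_j\ll T$, while the term localised at the upper cutoff contributes at most $\frac{T}{6}\int\min(1,1/|t-T|)\exp(-t/T)\,dt\ll T\log T$, the logarithm arising from integrating $1/(1+|t-T|)$ against the dense spectrum near $t\approx T$. This gives $E\ll T\log T$ and hence the lemma. The main obstacle is exactly this control of $R(t)$: the crude estimate $\hat g(\xi)\ll\min(T,1/|\xi|)$ is by itself useless here, since taking absolute values against the $\sim T^{2}$ terms of the inner sum would only yield $O(T^{2})$; one must instead exploit the oscillation of $\hat g$ beyond its modulus, tracking that for $|\xi|>1$ its behaviour is governed by the sharp upper end $x\approx T$ of the support of $g$.
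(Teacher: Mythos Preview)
The paper does not actually prove this lemma; it simply cites \cite[Section 6]{LuoSarnakPG}. So there is no detailed argument in the paper to compare against, and your task was effectively to reconstruct the Luo--Sarnak argument.

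Your outline is sound and captures the essential mechanism. The reduction via Fourier inversion to $\int_{\R}$, the interchange of sum and integral, and the final Weyl-law summation producing $T\log T$ from the near-diagonal $t_j\approx T$ are all correct. You are also right that the crude modulus bound $\hat g(\xi)\ll\min(T,1/|\xi|)$ is useless for the tail and that one must exploit the phase of $\hat g$.

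The one place where your write-up is genuinely sketchy is the structural claim that, for $|\xi|>1$, the transform $\hat g'(\xi)$ is ``of the shape $e(\xi T)$ times a smooth factor of size $\ll T/\xi^2$''. This is morally true but requires an explicit decomposition: after $N$ integrations by parts in $x$, the $N$-th derivative of $xg(x)e^{x/T}$ is $O(T)$ on the upper transition window $[T,T+1/2]$, $O(1)$ on the lower window $[1/2,1]$, and only $O(T^{1-N})$ on the bulk $[1,T]$. Writing the upper-window piece as $e(T\xi)$ times the transform of a fixed bump of height $T$, one gets a factor $B_U(\xi)$ with $B_U(\xi),\,B_U'(\xi)\ll_N T(1+|\xi|)^{-N}$; this is exactly what is needed to integrate by parts once more in $\xi$ and obtain $\int_{|\xi|>1}B_U(\xi)e(\xi(T-t))\,d\xi\ll T\min(1,1/|t-T|)$. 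The lower-window and bulk pieces contribute $O(1)$ to $J$ (for $t_j$ bounded away from $1$, which holds since $t_1>9$). With this made precise, your bound $R(t)\ll 1/t+(T/t)\min(1,1/|t-T|)$ and the ensuing summation are correct.
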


Let $h(x)$ be a smooth function supported in $[N,2N]$ such that
\begin{equation}\label{h conditions}
|h^{(j)}(x)|\ll N^{-j},\,\hbox{for}\,j=0,1,2,\ldots\, \int_{-\infty}^{\infty}h(x)dx=N.
\end{equation}

The first idea of Iwaniec was to investigate the following expression
\begin{equation}\label{spec.sum averaged over n}
\sum_n h(n)\sum_{j}\frac{|\rho_j(n)|^2}{\cosh(\pi t_j)}X^{it_j}\exp(-t_j/T).
\end{equation}
The sum over $n$ can be evaluated using the  Mellin inversion formula for $h(n)$ and the fact that Rankin zeta function \eqref{Rankin zeta} has a pole at the point $s=1$ with the residue given by equation \eqref{residue of Rankin zeta}. As in \cite[Lemma 8]{IwPG} one has
\begin{equation}\label{sum over n with h}
\sum_n h(n)\frac{|\rho_j(n)|^2}{\cosh(\pi t_j)}=
\frac{2}{\zeta(2)}N+\frac{1}{2\pi i}\int_{(1/2)}\tilde{h}(s)\frac{L(u_{j}\otimes u_{j},s)}{\cosh(\pi t_j)}ds.
\end{equation}
Substituting \eqref{sum over n with h} to \eqref{spec.sum averaged over n}, we obtain
\begin{multline}\label{spec.sum decomposition1}
\sum_{j}X^{it_j}\exp(-t_j/T)=
\frac{\zeta(2)}{2N}\sum_n h(n)\sum_{j}\frac{|\rho_j(n)|^2}{\cosh(\pi t_j)}X^{it_j}\exp(-t_j/T)\\-
\frac{\zeta(2)}{2N}
\frac{1}{2\pi i}\int_{(1/2)}\tilde{h}(s)\sum_{j}X^{it_j}\exp(-t_j/T)\frac{L(u_{j}\otimes u_{j},s)}{\cosh(\pi t_j)}ds.
\end{multline}
The standard tool for studying the first sum on the right-hand side of equation  \eqref{spec.sum decomposition1} is the Kuznetsov trace formula.

The second idea of Iwaniec was not to apply Kuznetsov trace formula directly but to find a function $\varphi(x)$ such that its transform \eqref{phiHat-Def} approximates $X^{it}\exp(-t/T)$ with a small error term. The procedure of finding a suitable $\varphi(x)$ is not straightforward and several examples of such functions are available in the literature, i.e. \cite[Eq. ~29]{IwPG}, \cite[Eq.~7.2]{DesIw}. The approximation by \cite[Eq.~7.2]{DesIw} produces a smaller error term, and therefore,  similarly to \cite{LuoSarnakPG} we choose $\varphi(x)$ defined by \eqref{phi def}. One can compute $\hat{\varphi}(t)$ explicitly, see equation \eqref{phihat expression}. Replacing  $X^{it}\exp(-t/T)$ by $\hat{\varphi}(t)$  in \eqref{spec.sum decomposition1}, we have
\begin{multline}\label{spec.sum decomposition2}
\sum_{j}X^{it_j}\exp(-t_j/T)+O(1)=
\frac{\zeta(2)}{2N}\sum_n h(n)\sum_{j}\frac{|\rho_j(n)|^2}{\cosh(\pi t_j)}\hat{\varphi}(t_j)\\-
\frac{\zeta(2)}{2N}
\frac{1}{2\pi i}\int_{(1/2)}\tilde{h}(s)\sum_{j}\hat{\varphi}(t_j)\frac{L(u_{j}\otimes u_{j},s)}{\cosh(\pi t_j)}ds.
\end{multline}
Applying Kuznetsov's trace formula \eqref{eq:KuzTrForm} to the first sum over $j$ on the right-hand side of equation \eqref{spec.sum decomposition2} and arguing as in the paper of Luo-Sarnak  \cite[p. 234]{LuoSarnakPG}, we obtain
\begin{multline}\label{spec.sum decomposition3}
\sum_{j}X^{it_j}\exp(-t_j/T)\ll
\frac{NX^{1/2}\log T}{(\max(1,NX^{1/2}/T))^{1/2}}+
\sqrt{N/X}\log^2N\\+T\log^2T+
\frac{1}{N}
\Biggl|\int_{(1/2)}\tilde{h}(s)\sum_{j}\hat{\varphi}(t_j)\frac{L(u_{j}\otimes u_{j},s)}{\cosh(\pi t_j)}ds\Biggr|.
\end{multline}

\begin{rem}
Estimate \eqref{spec.sum decomposition3} coincides with the  bound of Luo and Sarnak if $\max(1,NX^{1/2}/T)=NX^{1/2}/T.$  See \cite[p. 235, lines 2-4]{LuoSarnakPG}. This maximum arises naturally when we estimate the sum of Kloosterman sums
\begin{equation*}
S_n(\phi):=\sum_{q=1}^{\infty}\frac{S(n,n;q)}{q}\varphi\left( \frac{4\pi n}{q}\right)\ll
\sum_{q=1}^{\infty}\frac{|S(n,n;q)|}{q}X^{1/2}\frac{n}{q}\exp(-4\pi an/q)
\end{equation*}
using the Weil bound. Since $a\asymp X^{1/2}/(4T)$ (see \eqref{a,b def}), the sum over $q$  can be viewed as a sum over $q>\max(1,NX^{1/2}/T).$
\end{rem}

To analyse the integral in \eqref{spec.sum decomposition3}, Luo and Sarnak used the following "mean Lindel\"{o}f" estimate \cite[Eq. 5]{LuoSarnakPG}
\begin{equation}\label{mean Lindelof}
\sum_{t_j\le T}\frac{L(u_{j}\otimes u_{j},1/2+ir)}{\cosh(\pi t_j)}\ll (1+|r|)^4T^{2}\log^2T.
\end{equation}
In order to improve their result we consider the whole sum
\begin{equation*}
\sum_{j}\hat{\varphi}(t_j)\frac{L(u_{j}\otimes u_{j},1/2+ir)}{\cosh(\pi t_j)},
\end{equation*}
trying to use the oscillations of the function $\hat{\varphi}(t_j).$

It follows from the properties of $h(x)$ that $|\tilde{h}(s)|\ll N^{1/2}(1+|r|)^{-A}$ for any $A$ and $s=1/2+ir.$ Thus applying equation \eqref{mean Lindelof} to handle $|r|>T^{\epsilon}:=r_0$, we need to work only with $|r|\le r_0.$
Finally, using equations \eqref{Rankin to symsquare} and \eqref{alphaj}, we obtain
\begin{multline}\label{spec.sum decomposition10}
\sum_{j}X^{it_j}\exp(-t_j/T)\ll
\frac{NX^{1/2}\log T}{(\max(1,NX^{1/2}/T))^{1/2}}+
\sqrt{N/X}\log^2N+\\ T\log^2T+
\frac{1}{N^{1/2}}
\int_{-r_0}^{r_0}\Biggl|\frac{\zeta(1/2+ir)}{\zeta(1+2ir)}
\sum_{j}\alpha_j\hat{\varphi}(t_j)L(\sym^2 u_{j},1/2+ir)\Biggr|dr.
\end{multline}

\section{Exact formula}\label{sec:exactformula}
This section is devoted to the analysis of the first moment
\begin{equation}\label{firstmomentdef}
M_1(s):=\sum_{j}\alpha_j\hat{\varphi}(t_j)L(\sym^2 u_{j},s)
\end{equation}
with $s=1/2+ir,|r|\le T^{\epsilon}.$ The key ideas are similar to the ones used in \cite{BF1} for the holomorphic case. The main difference is that instead of the Petersson trace formula we now work with the Kuznetsov trace formula \eqref{eq:KuzTrForm}.

In order to prove an exact formula for $M_1(1/2+ir)$, we apply the technique of analytic continuation. For $\Re{s}>3/2$ define
\begin{equation}\label{sigmadef}
\Sigma(s)=\zeta(2s)\sum_{n=1}^{\infty}\frac{1}{n^{s}}\sum_{q=1}^{\infty}\frac{S(1,n^2;q)}{q}\varphi\left(\frac{4\pi n}{q}\right),
\end{equation}
\begin{equation}\label{sigmaBdef}
\Sigma_B(s)=\zeta(2s)\sum_{n=1}^{\infty}\frac{1}{n^{s}}\sum_{q=1}^{\infty}\frac{S(1,n^2;q)}{q}\varphi_B\left(\frac{4\pi n}{q}\right).
\end{equation}
Convergence of the double series in \eqref{sigmadef} and \eqref{sigmaBdef} for $\Re{s}>3/2$ follows from \eqref{Weilbound}, \eqref{phi def} and \eqref{phiBbound}.
\begin{lem} For $\Re{s}>3/2$ one has
\begin{equation}\label{firstmoment=MT+Sigma-SigmaB-ContSpectr}
M_1(s)=\zeta(2s)\varphi_0+\Sigma(s)-\Sigma_B(s)-
\frac{2\zeta(s)}{\pi}\int_{0}^{\infty}
\frac{\zeta(s+2it)\zeta(s-2it)}{|\zeta(1+2it)|^2}
\hat{\varphi}(t)dt.
\end{equation}
\end{lem}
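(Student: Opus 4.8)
The plan is to start from the definition of the first moment $M_1(s)=\sum_j\alpha_j\hat{\varphi}(t_j)L(\sym^2 u_j,s)$ and unfold everything into an absolutely convergent expression in the half-plane $\Re s>3/2$, where the Kuznetsov formula can be applied termwise. First I would write $L(\sym^2 u_j,s)=\zeta(2s)\sum_{n\ge1}\lambda_j(n^2)/n^s$ and recall that the normalizing coefficient $\alpha_j=|\rho_j(1)|^2/\cosh(\pi t_j)$ turns the product $\alpha_j\lambda_j(n^2)$ into the Fourier-coefficient data appearing on the spectral side of the Kuznetsov trace formula. Since $\lambda_j(n^2)=\lambda_j(1)\lambda_j(n^2)$ with $m=1$, the relevant instance of \eqref{eq:KuzTrForm} is the one with $m=1$, $n\mapsto n^2$, which lets me replace $\sum_j\alpha_j\lambda_j(n^2)\hat{\varphi}(t_j)$ by the geometric (diagonal plus Kloosterman) side minus the contribution of the continuous spectrum.

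The central step is therefore to interchange the $n$-summation with the application of the trace formula: I would write
\begin{equation*}
M_1(s)=\zeta(2s)\sum_{n=1}^{\infty}\frac{1}{n^s}\sum_{j}\alpha_j\lambda_j(1)\lambda_j(n^2)\hat{\varphi}(t_j),
\end{equation*}
and then substitute \eqref{eq:KuzTrForm} with the pair $(m,n)=(1,n^2)$ inside the sum over $n$. The diagonal term $\delta(1,n^2)\varphi_0$ contributes only $n=1$, producing $\zeta(2s)\varphi_0$. The Kloosterman term yields precisely $\zeta(2s)\sum_n n^{-s}\sum_q q^{-1}S(1,n^2;q)\varphi(4\pi\sqrt{1\cdot n^2}/q)=\Sigma(s)$, since $\sqrt{n^2}=n$. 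For the continuous-spectrum term I would use \eqref{series with tau}: the factor $\tau_{ir}(1)\tau_{ir}(n^2)=\tau_{ir}(n^2)$ summed against $\zeta(2s)n^{-s}$ collapses via \eqref{series with tau} to $\zeta(s)\zeta(s+2it)\zeta(s-2it)/\zeta(2s)$, and after multiplying by the surviving $\zeta(2s)$ this matches the integral term written in the statement with the correct normalization $2/\pi$ and weight $1/|\zeta(1+2it)|^2$.

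To arrive at the stated formula I then split $\varphi=\varphi_H+\varphi_B$ via \eqref{phi=phiB+phiH}; the appearance of $\Sigma(s)-\Sigma_B(s)$ rather than a single term reflects that I keep the full $\Sigma(s)$ built from $\varphi$ and explicitly subtract the Bessel part $\Sigma_B(s)$ built from $\varphi_B$, so that effectively only $\varphi_H$ is retained in agreement with the Kloosterman side of Kuznetsov. The main obstacle is \emph{justifying the interchange of summation and the termwise application of the trace formula} in the region $\Re s>3/2$: I must verify absolute convergence of the double (indeed triple, counting the spectral sum) series uniformly enough to rearrange freely. This is where the hypotheses pay off — the Weil bound \eqref{Weilbound} controls $S(1,n^2;q)$, the decay \eqref{phi def} and the estimate \eqref{phiBbound} control $\varphi$ and $\varphi_B$ for large and small arguments respectively, and the remark already notes that convergence of the series in \eqref{sigmadef} and \eqref{sigmaBdef} for $\Re s>3/2$ follows from exactly these inputs. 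I would assemble these bounds to show the rearrangement is legitimate, after which the identity is a formal consequence of the three term-by-term computations above.
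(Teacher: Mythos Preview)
Your proposal is correct and follows essentially the same route as the paper: unfold $L(\sym^2 u_j,s)$ as a Dirichlet series, apply Kuznetsov \eqref{eq:KuzTrForm} with $(m,n)=(1,n^2)$, sum the continuous spectrum via \eqref{series with tau}, and then rewrite the $\varphi_H$ appearing on the Kloosterman side as $\varphi-\varphi_B$ to produce $\Sigma(s)-\Sigma_B(s)$. The only slip is in your second paragraph, where you say the Kloosterman term yields $\Sigma(s)$ with $\varphi$ directly---it yields the sum with $\varphi_H$, as you correctly acknowledge in the next paragraph.
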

\begin{proof}
Applying Kuznetsov trace formula \eqref{eq:KuzTrForm} and using identity \eqref{series with tau}, we obtain
\begin{multline*}\label{firstmoment1}
M_1(s)=\zeta(2s)\sum_{n=1}^{\infty}\frac{1}{n^{s}}\sum_j\alpha_j\hat{\varphi}(t_j)\lambda_j(1)\lambda_j(n^2)=\\
\zeta(2s)\varphi_0-
\frac{2\zeta(s)}{\pi}\int_{0}^{\infty}
\frac{\zeta(s+2it)\zeta(s-2it)}{|\zeta(1+2it)|^2}
\hat{\varphi}(t)dt+\\
\zeta(2s)\sum_{n=1}^{\infty}\frac{1}{n^{s}}\sum_{q=1}^{\infty}\frac{S(1,n^2;q)}{q}\varphi_H\left( \frac{4\pi n}{q}\right).
\end{multline*}
 Then equation \eqref{phi=phiB+phiH} yields the lemma.
\end{proof}
In order to extend exact formula \eqref{firstmoment=MT+Sigma-SigmaB-ContSpectr} to the critical line $\Re{s}=1/2$, it is required to continue analytically
double sums \eqref{sigmadef} and \eqref{sigmaBdef}. This is the subject of the two subsequent sections.
\section{Analysis of $\Sigma(s)$}\label{sec:sigma1}
The strategy of working with $\Sigma(s)$ is the same as in the proof of \cite[Lemma 5.1]{BF1}. First, we change the order of summation using the fact that $\Re{s}>3/2$.  Second, we apply the Mellin inverse formula for $\varphi(x)$, open the Kloosterman sum and obtain the Lerch zeta function.  After that we move the line of integration to the region, where the functional equation for the Lerch zeta function can be applied.

However, it turns out that the Mellin transform of  $\varphi(x)$ does not allow moving the line of integration to the desired region. The reason is that the function $\varphi(x)$  behaves asymptotically like $x$ when $x\rightarrow0$, and this is insufficient for absolute convergence. To overcome this difficulty we use the so-called "Hecke trick".  Accordingly, instead of working directly with  $\varphi(x)$ and $\Sigma(s)$, we introduce for a complex variable $\lambda$ with $\Re{\lambda}>1$ two functions
\begin{equation}\label{philambda def}
\varphi(\lambda,x)=\frac{\sinh\beta}{\pi}x^{\lambda}\exp(ix\cosh\beta),
\end{equation}
\begin{equation}\label{sigmalambda def}
\Sigma(\lambda,s)=\sum_{n=1}^{\infty}\frac{1}{n^{s}}\sum_{q=1}^{\infty}\frac{S(1,n^2;q)}{q}\varphi\left(\lambda,\frac{4\pi n}{q}\right),
\end{equation}
where $\beta$ is defined by \eqref{beta def}. We first prove a formula for $\Sigma(\lambda,s)$ under the assumption that $\Re{\lambda}>\Re{s}$  and then extend it by analytic continuation to the point $\lambda=1.$

\begin{lem} For $\Re{\lambda}>\Re{s}>3/2$ one has
\begin{equation}\label{Sigma(lambda,s)}
\Sigma(\lambda,s)=(4\pi)^{s-1}\tilde{\varphi}(\lambda,1-s)\mathscr{L}_{-4}(s)+
2(2\pi)^{s-1}\sum_{n=1}^{\infty}\frac{1}{n^{1-s}}\mathscr{L}_{n^2-4}(s)I\left(\lambda,\frac{n}{2}\right),
\end{equation}
where
\begin{equation}\label{eq:integralI}
I(\lambda,x):=\frac{1}{2\pi i}\int_{(\Delta)}\tilde{\varphi}(\lambda,w)\Gamma(1-s-w)\sin\left( \pi \frac{s+w}{2}\right)x^wdw
\end{equation}
with $-\Re{\lambda}<\Delta<-\Re{s}.$
\end{lem}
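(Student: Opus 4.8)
The plan is to convert the double series \eqref{sigmalambda def} into a Mellin--Barnes integral and then drive it through the Lerch functional equation \eqref{LerchFE}. Since $\Re\lambda>\Re s>3/2$, the Weil bound \eqref{Weilbound} together with the estimate $\varphi(\lambda,x)\ll x^{\Re\lambda}$ for small $x$ (and exponential decay for large $x$, cf. \eqref{a,b def}) makes the double series absolutely convergent, so the order of summation may be interchanged freely. I would first insert the Mellin inversion of $\varphi(\lambda,\cdot)$, with contour $\Re w=\sigma$ inside the strip of holomorphy of $\tilde\varphi(\lambda,\cdot)$ (that is $\sigma>-\Re\lambda$) and large enough that the $n$- and $q$-series still converge after the interchange. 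This recasts the object as
\[
\Sigma(\lambda,s)=\frac1{2\pi i}\int_{(\sigma)}\tilde\varphi(\lambda,w)(4\pi)^{-w}\sum_{q=1}^{\infty}\frac{1}{q^{1-w}}\sum_{n=1}^{\infty}\frac{S(1,n^2;q)}{n^{s+w}}\,dw.
\]

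Next I would open the Kloosterman sum, $S(1,n^2;q)=\sum_{(a,q)=1}e((a+a^*n^2)/q)$, and, since this is a function of $n\bmod q$, split the $n$-summation into residue classes; for each $q$ this writes the inner sum as a finite combination of Lerch zeta functions $\zeta(r/q,0,s+w)$ weighted by the Kloosterman data. This is where \eqref{LerchFE} enters: applied with $s+w$ in place of $s$, it re-expresses each $\zeta(r/q,0,s+w)$ through the dual functions $\zeta(0,\pm r/q,1-s-w)$, which converge once the contour lies in $\Re(s+w)<0$. Because the Kloosterman weight is even in $r$, the two terms of \eqref{LerchFE} coalesce via $ie(-\tfrac{s+w}{4})-ie(\tfrac{s+w}{4})=2\sin(\pi\tfrac{s+w}{2})$; combined with $\Gamma(1-s-w)$ and the factor $(2\pi)^{s+w-1}$, this reproduces exactly the prefactor $2(2\pi)^{s-1}$ and the kernel of $I(\lambda,x)$ from \eqref{eq:integralI}.

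The arithmetic heart is the evaluation of the complete exponential sums that remain. Pairing the dual frequency $e(\mp mr/q)$ with the Kloosterman weight and completing the square in $r$ (carried out modulo $4q$, the modulus underlying $\rho_q$), the quadratic Gauss sum forces the discriminant $m^2-4$ to surface -- the shift by $4$ coming from the completion of the square against the modulus $4q$ -- and summing the resulting solution counts over $q$ reassembles the defining series \eqref{Lbyk}, namely $\mathscr L_{m^2-4}(s)$; the same power-of-two matching explains the scaling $x=m/2$ in $I(\lambda,m/2)$. The nonzero frequencies $m\ge1$ then contribute $2(2\pi)^{s-1}\sum_{m\ge1}\frac{1}{m^{1-s}}\mathscr L_{m^2-4}(s)I(\lambda,m/2)$, while the degenerate frequency $m=0$ (discriminant $0^2-4=-4$) is singular, since $I(\lambda,0)$ diverges, and must be extracted by hand. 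It is accounted for by the pole at $w=1-s$, and evaluating the residue there produces the main term $(4\pi)^{s-1}\tilde\varphi(\lambda,1-s)\mathscr L_{-4}(s)$, the factors $(4\pi)^{s-1}=(4\pi)^{-(1-s)}$ and $\tilde\varphi(\lambda,1-s)$ being simply $(4\pi)^{-w}\tilde\varphi(\lambda,w)$ evaluated at $w=1-s$. Finally I would push the contour to $\Re w=\Delta$ with $-\Re\lambda<\Delta<-\Re s$; this shift is available precisely because the Hecke parameter $\lambda$ has widened the holomorphy strip of $\tilde\varphi(\lambda,\cdot)$, and it lands us where the dual Lerch series converge absolutely and can be pulled out of the integral, giving \eqref{Sigma(lambda,s)}.

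I expect the main obstacle to be twofold. The first is the Gauss-sum bookkeeping that matches the transformed $q$-series to $\mathscr L_{m^2-4}(s)$: one must identify the counting functions $\rho_q$ correctly and track every power of $2$, $\pi$ and $q$ so that the remaining $w$-integral condenses exactly into $I(\lambda,m/2)$. The second, and genuinely delicate, point is the isolation of the $m=0$ term: the divergent dual sum over $m\ge1$ and the pole of the gamma (and Riemann-zeta) factor at $w=1-s$ must be disentangled together, since handling either naively gives a divergent residue, and only the careful pooling of these degenerate contributions pins down the constant $\mathscr L_{-4}(s)$. The interchanges of summation and the contour shift are otherwise routine, controlled by the region $\Re\lambda>\Re s>3/2$ and the decay of $\varphi(\lambda,\cdot)$.
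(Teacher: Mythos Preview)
Your proposal is correct and follows essentially the same route as the paper: Mellin inversion, reduction of the $n$-sum to Lerch zeta functions via residue classes modulo $q$, a contour shift past $w=1-s$, the Lerch functional equation \eqref{LerchFE} (with the evenness of $d\mapsto S(1,d^2;q)$ collapsing the two terms into the sine), and finally the identification of the transformed $q$-sum with $\mathscr{L}_{n^2-4}(s)$, which the paper imports as \cite[Lemma~4.1]{BF1}.

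The one point where your description is slightly off is the extraction of the $\mathscr{L}_{-4}(s)$ term. In the paper the residue at $w=1-s$ is picked up \emph{before} the functional equation is applied, coming directly from the pole of $\zeta(d/q,0,s+w)$; the residue is then $\zeta(2s)(4\pi)^{s-1}\tilde\varphi(\lambda,1-s)\sum_q q^{-1-s}\sum_{d\bmod q}S(1,d^2;q)$, and this last double sum is identified with $\mathscr{L}_{-4}(s)/\zeta(2s)$ outright. There is therefore no divergent ``$m=0$ dual frequency'' to pool against a gamma pole---the anticipated delicacy disappears once the order of operations is chosen this way, and the remaining integral already sits on a contour where the dual Lerch series converge absolutely.
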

\begin{proof}
To change the order of summation in \eqref{sigmalambda def}, absolute convergence of the both series is required. Applying  \eqref{beta def}, \eqref{c def} and \eqref{a,b def}, we have
\begin{equation}\label{philambda bound}
|\varphi(\lambda,x)|\ll X^{1/2}x^{\Re(\lambda)}\exp(-ax).
\end{equation}
Using \eqref{Weilbound} we obtain that both sums in \eqref{sigmalambda def} are absolutely convergent for $\Re(s)>3/2.$
According to \cite[p. 312, Eq. 1]{BE}, the Mellin transform of $\varphi(\lambda,x)$  for $\Re{w}>-\Re{\lambda}$ is equal to
\begin{equation}\label{philambdaMellin}
\tilde{\varphi}(\lambda,w)=\frac{\sinh\beta}{\pi}\frac{\Gamma(w+\lambda)}{c^{w+\lambda}}.
\end{equation}
 It follows that
\begin{equation*}
\Sigma(\lambda,s)=\zeta(2s)\sum_{q=1}^{\infty}\frac{1}{q}
\frac{1}{2\pi i}
\int_{(\Delta)}\tilde{\varphi}(\lambda,w)\sum_{n=1}^{\infty}\frac{S(1,n^2;q)}{n^{w+s}}
\left(\frac{q}{4\pi} \right)^wdw.
\end{equation*}
We assume that $\max(-\Re{\lambda},1-\Re{s})<\Delta<-1/2$ to guarantee absolute convergence of the integral over $w$ and the sums over $q,n.$ Note that due to \eqref{argc} the function $\tilde{\varphi}(\lambda,w)$ is of exponential decay in terms of $\Im{w}.$
By \cite[Lemma 5.1]{BF1} one has
\begin{equation*}
\sum_{n=1}^{\infty}\frac{S(1,n^2;q)}{n^{w+s}}=
\sum_{d\Mod{q}}\frac{S(1,d^2;q)}{q^{w+s}}\zeta\left(\frac{d}{q},0,w+s\right).
\end{equation*}
Note that for all $d$, the Lerch zeta function has a simple pole at $w=1-s$ with residue one. We move the $w$-contour to the left up to $\Delta_1:=-s-\epsilon$, crossing a simple pole at $w=1-s.$ For the resulting integral we apply functional equation \eqref{LerchFE}. Finally,
\begin{multline*}
\Sigma(\lambda,s)=\zeta(2s)(4\pi)^{s-1}\tilde{\varphi}(\lambda,1-s)\sum_{q=1}^{\infty}\frac{1}{q^{1+s}}
\sum_{d\Mod{q}}S(1,d^2;q)+\\
\zeta(2s)\sum_{q=1}^{\infty}\frac{1}{q}
\frac{1}{2\pi i}
\int_{(\Delta_1)}\tilde{\varphi}(\lambda,w)
\frac{2(2\pi)^{s+w-1}}{q^{s+w}}\Gamma(1-s-w)\times \\\sin\left(\pi \frac{s+w}{2}\right)
\sum_{d\Mod{q}}S(1,d^2;q)\zeta(0,d/q;1-s-w)
\left(\frac{q}{4\pi} \right)^wdw.
\end{multline*}
Opening the Lerch zeta function and using \cite[Lemma 4.1]{BF1} we prove the lemma.
\end{proof}

\begin{lem} The following relation holds
\begin{multline}\label{integralI(lambda,x)rational}
I(\lambda,x)=\frac{\sinh\beta}{\pi c^{\lambda}}2^{\lambda-s}
\frac{\Gamma((1+\lambda-s)/2)\Gamma(1+(\lambda-s)/2)}{2\Gamma(1/2)}\\ \times \left(\frac{x}{c}\right)^{1-s}
\left(
\left(1+\sqrt{-x^2/c^2}\right)^{s-\lambda-1}+
\left(1-\sqrt{-x^2/c^2}\right)^{s-\lambda-1}
\right).
\end{multline}
\end{lem}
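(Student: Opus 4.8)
The plan is to insert the explicit Mellin transform \eqref{philambdaMellin} into the definition \eqref{eq:integralI}, pull the constant $\frac{\sinh\beta}{\pi c^{\lambda}}$ out of the integral, and combine $c^{-w}x^{w}=(x/c)^{w}$. This reduces the lemma to evaluating
\[
J:=\frac{1}{2\pi i}\int_{(\Delta)}\Gamma(w+\lambda)\,\Gamma(1-s-w)\,\sin\!\left(\pi\tfrac{s+w}{2}\right)\left(\tfrac{x}{c}\right)^{w}dw
\]
and showing that $J=\tfrac12\,\Gamma(1+\lambda-s)\,(x/c)^{1-s}\bigl[(1+ix/c)^{s-\lambda-1}+(1-ix/c)^{s-\lambda-1}\bigr]$. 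The stated form \eqref{integralI(lambda,x)rational} then follows by recasting the constant through Legendre's duplication formula and identifying $\sqrt{-x^{2}/c^{2}}=ix/c$; indeed $\Gamma(1+\lambda-s)=\tfrac{2^{\lambda-s}}{\sqrt{\pi}}\Gamma(\tfrac{1+\lambda-s}{2})\Gamma(\tfrac{2+\lambda-s}{2})$ turns $\tfrac12\Gamma(1+\lambda-s)$ into exactly the prefactor $\tfrac{\sinh\beta}{\pi c^{\lambda}}2^{\lambda-s}\frac{\Gamma((1+\lambda-s)/2)\Gamma(1+(\lambda-s)/2)}{2\Gamma(1/2)}$.

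The key device is to write $\sin(\pi(s+w)/2)=\tfrac{1}{2i}\bigl(e^{i\pi(s+w)/2}-e^{-i\pi(s+w)/2}\bigr)$ and absorb the $w$-dependent exponential into the power, so that $(x/c)^{w}$ becomes $(\pm i x/c)^{w}$. This splits $J=\tfrac{1}{2i}\bigl(e^{i\pi s/2}J_{+}-e^{-i\pi s/2}J_{-}\bigr)$ with
\[
J_{\pm}=\frac{1}{2\pi i}\int_{(\Delta)}\Gamma(w+\lambda)\,\Gamma(1-s-w)\,(\pm i x/c)^{w}\,dw .
\]
Each $J_{\pm}$ is now a classical Mellin--Barnes beta integral, which I would evaluate from the identity $\frac{1}{2\pi i}\int_{(c_{0})}\Gamma(a+w)\Gamma(b-w)z^{w}\,dw=\Gamma(a+b)\,z^{b}(1+z)^{-(a+b)}$, valid for $|\arg z|<\pi$ and obtained by closing the contour to the side where $z^{w}$ decays and summing the resulting binomial residue series. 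Taking $a=\lambda$, $b=1-s$ and $z=\pm i x/c$ gives $J_{\pm}=\Gamma(1+\lambda-s)\,(\pm i x/c)^{1-s}(1\pm i x/c)^{s-\lambda-1}$. Recombining and simplifying the phases via $e^{i\pi s/2}(+i)^{1-s}=i$ and $e^{-i\pi s/2}(-i)^{1-s}=-i$ collapses $J$ into the required symmetric sum.

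The main obstacles are the contour and branch bookkeeping. First one must check that $(\Delta)$ separates the poles of $\Gamma(w+\lambda)$ (at $w=-\lambda-n$) from those of $\Gamma(1-s-w)$ (at $w=1-s+n$), i.e.\ that $-\Re\lambda<\Delta<1-\Re s$; this holds since $-\Re\lambda<\Delta<-\Re s<1-\Re s$. The more delicate point is the validity of the beta-integral evaluation as an analytic continuation, which requires $\pm i x/c$ to avoid the cut along the negative reals, i.e.\ $|\arg(\pm i x/c)|<\pi$. Here the precise location $\arg c=-\pi/2+\gamma$ with $0<\gamma\ll T^{-1}$ from \eqref{argc} is exactly what is needed: for $x>0$ it yields $\arg(x/c)=\pi/2-\gamma$, hence $\arg(ix/c)=\pi-\gamma$ and $\arg(-ix/c)=-\gamma$ both lie strictly inside $(-\pi,\pi)$. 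This is precisely where the fact that $\arg c$ is close to but not equal to $-\pi/2$ is used, and it fixes the correct single-valued determinations of the two powers appearing in \eqref{integralI(lambda,x)rational}. Finally, the exponential decay $\Gamma(w+\lambda)\Gamma(1-s-w)\ll e^{-\pi|\Im w|}$ along $(\Delta)$ justifies splitting the sine and evaluating each $J_{\pm}$ separately.
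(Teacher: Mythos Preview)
Your argument is correct and complete; the contour separation, the convergence check for the split integrals, and the branch bookkeeping via \eqref{argc} are all handled properly. (A minor quibble: with the principal branch one actually has $\sqrt{-x^{2}/c^{2}}=-ix/c$ rather than $+ix/c$, since $\arg(-x^{2}/c^{2})=-2\gamma$; but the target formula is symmetric in the sign of the root, so this is harmless.)

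Your route, however, differs from the paper's. The paper does not split the sine into exponentials; instead it pushes the contour in \eqref{eq:integralI} to the right and sums the residues at $w=1-s+j$. The sine kills the odd $j$, and what remains is recognised as a ${}_2F_1$ with argument $-x^{2}/c^{2}$ (this is recorded as formula \eqref{integralI(lambda,x)hypergeometric}); the closed form \eqref{integralI(lambda,x)rational} then drops out of the quadratic-transformation identity \cite[Eq.~15.4.7]{HMF}. Your approach bypasses the hypergeometric intermediary entirely by recognising each half as a Barnes beta integral, which is arguably cleaner and makes the dependence on $\arg c$ more transparent. The paper's route, on the other hand, yields the intermediate hypergeometric representation \eqref{integralI(lambda,x)hypergeometric} for free, and avoids having to justify convergence of the two pieces $J_{\pm}$ separately at the borderline argument $\pi-\gamma$.
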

\begin{proof}
Moving the contour of integration in \eqref{eq:integralI} to the right, we cross simple poles at $w=1-s+j$, $j=0,1,2,\ldots$
Therefore,
\begin{equation*}
I(\lambda,x)=\frac{\sinh\beta}{\pi c^{\lambda}}\left(\frac{x}{c}\right)^{1-s}
\sum_{j=0}^{\infty}\frac{(-1)^j}{(2j)!}\Gamma(1+\lambda-s+2j)\left(\frac{x}{c}\right)^{2j}.
\end{equation*}
Using the duplication formula twice
\begin{equation*}
\Gamma(1+\lambda-s+2j)=\frac{2^{\lambda-s+2j}}{\sqrt{\pi}}\Gamma((1+\lambda-s)/2+j)\Gamma(1+(\lambda-s)/2)+j),
\end{equation*}
\begin{equation*}
\Gamma(2j+1)=\frac{2^{2j}}{\sqrt{\pi}}\Gamma(j+1/2)\Gamma(j+1),
\end{equation*}
we obtain
\begin{multline}\label{integralI(lambda,x)hypergeometric}
I(\lambda,x)=\frac{\sinh\beta}{\pi c^{\lambda}}2^{\lambda-s}
\frac{\Gamma((1+\lambda-s)/2)\Gamma(1+(\lambda-s)/2)}{\Gamma(1/2)}\\ \times \left(\frac{x}{c}\right)^{1-s} {}_2F_{1}\left(\frac{1+\lambda-s}{2},1+\frac{\lambda-s}{2},\frac{1}{2};-\frac{x^2}{c^2} \right).
\end{multline}
Finally,  \cite[Eq. 15.4.7]{HMF} yields the lemma.
\end{proof}

Analyzing the right-hand side of equation \eqref{Sigma(lambda,s)} we see that $\Sigma(\lambda,s)$ can be continued analytically to the region of convergence of the series
\begin{equation*}
\sum_{n=1}^{\infty}\frac{1}{n^{1-s}}\mathscr{L}_{n^2-4}(s)I\left(\lambda,\frac{n}{2}\right).
\end{equation*}
In view of  \eqref{integralI(lambda,x)rational}  and \eqref{eq:subconvexity}, we prove analytic continuation of $\Sigma(\lambda,s)$ to the region of our interest, namely $\Re{s}=1/2$ and $\lambda=1.$


\begin{lem} For $\Re{s}=1/2$ one has
\begin{equation}\label{Sigma(1,s)}
\Sigma(s)=(4\pi)^{s-1}\tilde{\varphi}(1,1-s)\mathscr{L}_{-4}(s)+
2(2\pi)^{s-1}\sum_{n=1}^{\infty}\frac{1}{n^{1-s}}\mathscr{L}_{n^2-4}(s)I\left(\frac{n}{2}\right),
\end{equation}
where
\begin{multline}\label{integralI(1,x)}
I(x)=\frac{\sinh\beta}{\pi c}2^{1-s}\left(\frac{x}{c}\right)^{1-s}
\frac{\Gamma(1-s/2)\Gamma(3/2-s/2)}{\Gamma(1/2)}\times \\
\left(
\left(1+\sqrt{-x^2/c^2}\right)^{s-2}+
\left(1-\sqrt{-x^2/c^2}\right)^{s-2}
\right).
\end{multline}
\end{lem}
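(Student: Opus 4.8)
The plan is to obtain \eqref{Sigma(1,s)} from the identity \eqref{Sigma(lambda,s)}, already established for $\Re\lambda>\Re s>3/2$, by analytic continuation in the two variables $\lambda$ and $s$ down to the point $\lambda=1$, $\Re s=1/2$. Since $\varphi(1,x)=\varphi(x)$ by \eqref{philambda def} and \eqref{phi def}, the function $\Sigma(\lambda,s)$ reduces to $\Sigma(s)$ at $\lambda=1$, and $I(x)$ is by definition $I(1,x)$; so it suffices to exhibit a connected domain in $(\lambda,s)$ on which the right-hand side of \eqref{Sigma(lambda,s)} is analytic and which contains both $\{\Re\lambda>\Re s>3/2\}$ and the target point, and then to specialise.

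First I would record the specialisation of the coefficient. Setting $\lambda=1$ in \eqref{integralI(lambda,x)rational} and using $\Gamma((1+\lambda-s)/2)=\Gamma(1-s/2)$, $\Gamma(1+(\lambda-s)/2)=\Gamma(3/2-s/2)$, together with $c^{\lambda}=c$ and $2^{\lambda-s}=2^{1-s}$, gives the expression \eqref{integralI(1,x)} for $I(x)$. The first summand of \eqref{Sigma(lambda,s)} specialises to $(4\pi)^{s-1}\tilde{\varphi}(1,1-s)\mathscr{L}_{-4}(s)$, and by \eqref{philambdaMellin} the factor $\tilde{\varphi}(1,1-s)=\frac{\sinh\beta}{\pi}\Gamma(2-s)c^{s-2}$ is holomorphic near $\Re s=1/2$, while $\mathscr{L}_{-4}(s)$ is regular there by \eqref{eq:subconvexity}.

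The heart of the matter is the convergence of the series $\sum_{n}n^{s-1}\mathscr{L}_{n^2-4}(s)I(\lambda,n/2)$. From \eqref{integralI(lambda,x)rational}, with $x=n/2$, the argument $-x^2/c^2$ has modulus $\asymp n^2/|c|^2$, so for $n\to\infty$ one has $|(1\pm\sqrt{-x^2/c^2})^{s-\lambda-1}|\ll(n/|c|)^{\Re s-\Re\lambda-1}$; combining this with the prefactor $(x/c)^{1-s}$ and the pole-free Gamma-factors (valid when $\Re(\lambda-s)>0$) yields $I(\lambda,n/2)\ll_{X}n^{-\Re\lambda}$, uniformly for $(\lambda,s)$ on compacta. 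Invoking the subconvexity bound \eqref{eq:subconvexity} in the form $\mathscr{L}_{n^2-4}(s)\ll n^{2\theta}(1+|r|)^{A}$, the general term is $\ll_{X}n^{\Re s-1+2\theta-\Re\lambda}$, so the series converges absolutely and locally uniformly on the domain $\{\Re\lambda>\Re s+2\theta\}$. This domain contains the original region $\{\Re\lambda>\Re s>3/2\}$ and also the target $\lambda=1$, $\Re s=1/2$, because $1>1/2+2\theta$ when $\theta=1/6$; the two are joined inside the domain by first lowering $\Re s$ at large fixed $\Re\lambda$ and then lowering $\Re\lambda$. Hence \eqref{Sigma(lambda,s)} continues analytically throughout, and evaluating at $\lambda=1$, $\Re s=1/2$ gives \eqref{Sigma(1,s)}.

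The step I expect to be the main obstacle is the uniform estimate $I(\lambda,n/2)\ll_{X}n^{-\Re\lambda}$: one has to extract the correct power of $n$ from the combination $(1+\sqrt{-x^2/c^2})^{s-\lambda-1}+(1-\sqrt{-x^2/c^2})^{s-\lambda-1}$, being careful in the range $n\asymp|c|\asymp X^{1/2}$ where $1-\sqrt{-x^2/c^2}$ is small. Since $\arg c=-\pi/2+\gamma$ with $\gamma\ll T^{-1}$ by \eqref{argc}, this quantity stays bounded away from $0$, so the near-critical range contributes only finitely many bounded terms and does not affect convergence here, although it is precisely the feature that will drive the quantitative estimates (and the dependence on $\fr(X)$) in the later sections. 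Checking that $\theta<1/4$ places the target inside the region of convergence then closes the argument.
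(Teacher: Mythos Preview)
Your argument is correct and is essentially the paper's own: the paper also specialises the closed form \eqref{integralI(lambda,x)rational} to $\lambda=1$, observes that the right-hand side of \eqref{Sigma(lambda,s)} is analytic wherever the series $\sum_n n^{s-1}\mathscr{L}_{n^2-4}(s)I(\lambda,n/2)$ converges, and invokes \eqref{eq:subconvexity} together with the decay $I(\lambda,n/2)\ll n^{-\Re\lambda}$ to reach $\Re s=1/2$, $\lambda=1$. Your explicit delineation of the region $\{\Re\lambda>\Re s+2\theta\}$ and the observation that the near-critical terms $n\asymp|c|$ are harmless for convergence (because $\gamma>0$ keeps $1-\sqrt{-x^2/c^2}$ away from zero) are details the paper leaves implicit, but the logic is the same.
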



\section{Analysis of $\Sigma_B(s)$}\label{sec:sigma2}

Substituting expression \eqref{phiBseries} to formula \eqref{sigmaBdef} we obtain
\begin{equation}\label{sigmaBwith sum over k}
\Sigma_B(s)=\frac{1}{\pi^2}\sum_{k=1}^{\infty} (2k-1)\exp(-(2k-1)\beta)\Sigma_B(k,s),
\end{equation}
where
\begin{equation}\label{sigmaB(k,s) def}
\Sigma_B(k,s)=2\pi(-1)^k
\zeta(2s)\sum_{n=1}^{\infty}\frac{1}{n^{s}}\sum_{q=1}^{\infty}\frac{S(1,n^2;q)}{q}J_{2k-1}\left(\frac{4\pi n}{q}\right).
\end{equation}
The analysis of the function $\Sigma_B(k,s)$ was performed in \cite[Lemma 5.1]{BF1}. The main difference is that  in \cite[Lemma 5.1]{BF1} we assumed that $k\ge6$ and now the sum is over all $k\geq 1$. The case $k=1$ is the most difficult one. In all other cases we can proceed exactly as in the proof of \cite[Lemma 5.1]{BF1}. The major problem for $k=1$ is that the line of integration cannot be moved to $\Delta_1:=-s-\epsilon$  because it is required that $1-2k<-s-\epsilon<-3/2-\epsilon$, which is possible only for $k\geq 2.$ To avoid this problem we again use the "Hecke trick" in the same manner as in \cite{BykF}. More precisely, we replace $k$ in the order of the Bessel function by a parameter $\lambda$ and assume that $\Re{\lambda}>5/4.$  In doing so we can carry on the analysis of \cite[Lemma 5.1]{BF1} and obtain for $\Re{\lambda}>(1+\Re{s})/2, \Re{s}>3/2$ the following expression
\begin{equation}\label{sigmaB(lambda,s)continuation}
\Sigma_B(\lambda,s)=(2\pi)^{s}i^{2k}\frac{\Gamma(\lambda-s/2)}{\Gamma(\lambda+s/2)}\mathscr{L}_{-4}(s)+
(2\pi)^{s}i^{2k}\sum_{n=1}^{\infty}\frac{1}{n^{1-s}}\mathscr{L}_{n^2-4l^2}(s)I_B\left(\lambda,n\right),
\end{equation}
where
\begin{equation}\label{eq:integralIB}
I_B(\lambda,x):=\frac{1}{2\pi i}\int_{(\Delta)}\frac{\Gamma(\lambda-1/2+w/2)}{\Gamma(\lambda+1/2-w/2)}\Gamma(1-s-w)\sin\left( \pi \frac{s+w}{2}\right)x^wdw
\end{equation}
with $1-2\Re{\lambda}<\Delta<1-\Re{s}.$ As in \cite[Lemma 5.1]{BF1} we prove that for $x \geq 2$ the following holds
\begin{multline}\label{integralIBgeq2}
I_B(\lambda,x)=
\frac{2^{2\lambda}}{2^s\sqrt{\pi}}\cos\left( \pi\lambda-\frac{\pi s}{2}\right)
\frac{\Gamma(\lambda-s/2)\Gamma(\lambda+1/2-s/2)}{\Gamma(2\lambda)} x^{1-2\lambda} \\ \times {}_2F_{1}\left(\lambda-s/2,\lambda+1/2-s/2,2\lambda;\frac{4}{x^2} \right).
\end{multline}
The right-hand side of \eqref{sigmaB(lambda,s)continuation} yields analytic continuation of $\Sigma_B(\lambda,s)$ to the region of convergence of the series
\begin{equation*}
\sum_{n=1}^{\infty}\frac{1}{n^{1-s}}\mathscr{L}_{n^2-4l^2}(s)I_B\left(\lambda,n\right).
\end{equation*}
Using \eqref{integralIBgeq2}, \eqref{eq:subconvexity} we obtain analytic continuation of $\Sigma_B(\lambda,s)$ to $\Re{s}=1/2, \lambda=1.$


\begin{lem} For $\Re{s}=1/2$ the following formula holds
\begin{multline}\label{SigmaB(1,s)}
\Sigma_B(s)=\frac{1}{\pi^2}\sum_{k=1}^{\infty} (2k-1)\exp(-(2k-1)\beta)\Biggl(
\frac{(2\pi)^{s}i^{2k}}{2}\frac{\Gamma(k-s/2)}{\Gamma(k+s/2)}\mathscr{L}_{-4}(s)\\+
\frac{(2\pi)^s}{\sqrt{\pi}}\zeta(2s-1)\cos\left(\frac{\pi s}{2}\right)
\frac{\Gamma(k-s/2)\Gamma(k+1/2-s/2)}{\Gamma(k+s/2)\Gamma(k-1/2+s/2)}\Gamma(s-1/2)\\+
\frac{(2\pi)^{s}\sin(\pi s/2)}{\sqrt{\pi}}\mathscr{L}_{-3}(s)\Phi_k(s,1/4)\\+
\frac{\pi^{s}\cos(\pi s/2)}{\sqrt{\pi}}\sum_{n>2}\mathscr{L}_{n^2-4}(s)
 n^s\Psi_k(s,4/n^2)
\Biggr),
\end{multline}
where
\begin{equation}\label{defPhik}
\Phi_k(s,x):=
\frac{\Gamma(k-s/2)\Gamma(1-k-s/2)}{\Gamma(1/2)}
{}_2F_1\left( k-\frac{s}{2},1-k-\frac{s}{2},\frac{1}{2};\frac{1}{4}\right),
\end{equation}
\begin{equation}\label{defPsik}
\Psi_k(s,x):=x^k
\frac{\Gamma(k-s/2)\Gamma(k+1/2-s/2)}{\Gamma(2k)}
{}_2F_1\left( k-\frac{s}{2},k+\frac{1-s}{2},2k;x\right).
\end{equation}
\end{lem}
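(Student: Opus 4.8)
The plan is to obtain \eqref{SigmaB(1,s)} by inserting the continued expression \eqref{sigmaB(lambda,s)continuation}, specialised to the integer point $\lambda=k$, into the series \eqref{sigmaBwith sum over k}; the $k$-sum converges because of the factor $\exp(-(2k-1)\beta)$ with $\Re\beta=\log\sqrt{X}>0$, so nothing obstructs term-by-term substitution. This produces the leading term carrying $\mathscr{L}_{-4}(s)$, which I expect to match the first summand, together with the arithmetic sum $\sum_{n\ge1}n^{s-1}\mathscr{L}_{n^2-4}(s)\,I_B(k,n)$. I would split this sum into the three ranges $n\ge3$, $n=2$, $n=1$, dictated by the fact that \eqref{integralIBgeq2} is valid only for argument $4/n^2\le1$, i.e. $n\ge2$, and is there a genuine series, a boundary value, and divergent respectively. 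A preliminary observation simplifies the trigonometric prefactors once $\lambda=k$ is an integer: since $i^{2k}=(-1)^k$ and $\cos(\pi k-\pi s/2)=(-1)^k\cos(\pi s/2)$, the product $i^{2k}\cos(\pi\lambda-\pi s/2)$ collapses to $\cos(\pi s/2)$, exactly the factor appearing in \eqref{SigmaB(1,s)}.

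For the tail $n\ge3$ the argument $4/n^2<1$, so I would substitute \eqref{integralIBgeq2} directly; matching the power $n^{s-1}\cdot n^{1-2k}=n^{s-2k}$ and the constant $2^{2k}=4^k$ against the factor $x^k$ with $x=4/n^2$ in the definition \eqref{defPsik}, and using $(2\pi)^s/2^s=\pi^s$, reproduces the weight $n^s\Psi_k(s,4/n^2)$ and hence the last summand. For $n=2$ the argument equals $1$ and I would evaluate the hypergeometric by Gauss's theorem ${}_2F_1(a,b,c;1)=\Gamma(c)\Gamma(c-a-b)/(\Gamma(c-a)\Gamma(c-b))$ with $a=k-s/2$, $b=k+1/2-s/2$, $c=2k$, noting $c-a-b=s-1/2$; the powers of $2$ collapse (the factor $2^{2\lambda}\cdot 2^{1-2\lambda}=2$ cancels against $2^{s-1}$ from $n^{s-1}$, leaving $(2\pi)^s$) and $\Gamma(2k)$ cancels, producing the factor $\Gamma(s-1/2)$ times the Gamma quotient of the second summand. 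To finish this term I would prove the degenerate identity $\mathscr{L}_0(s)=\zeta(2s-1)$ from the definition \eqref{Lbyk} by computing $\rho_q(0)$, which supplies the $\zeta(2s-1)$. Since on $\Re s=1/2$ one has $\Re(c-a-b)=0$, Gauss's formula sits on the edge of its range, so I would first work at $\Re s>1/2$ and continue.

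The delicate contribution is $n=1$, the analytic reincarnation of the $k=1$ difficulty: here $4/n^2=4>1$ lies outside the disc of convergence of \eqref{integralIBgeq2}, so that formula is unusable. Instead I would return to the Mellin--Barnes integral \eqref{eq:integralIB} and collect the residues at the poles $w=1-s+2\ell$ of $\Gamma(1-s-w)$, closing the contour in the direction opposite to that used for \eqref{integralIBgeq2}; after the duplication and reflection identities this residue series reorganises into ${}_2F_1(k-s/2,1-k-s/2,1/2;x^2/4)$ with $x=1$, i.e. argument $1/4$, together with the Gamma factors $\Gamma(k-s/2)\Gamma(1-k-s/2)/\Gamma(1/2)$ --- precisely $\Phi_k(s,1/4)$ as in \eqref{defPhik}. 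The reflection step produces a factor $\sin(\pi(\lambda+s/2))$, which at $\lambda=k$ equals $(-1)^k\sin(\pi s/2)$ and thus combines with $i^{2k}$ to give the $\sin(\pi s/2)$ multiplying $\mathscr{L}_{-3}(s)\Phi_k(s,1/4)$ in the third summand. I expect this $n=1$ step to be the main obstacle: the contour shift and the reflection manipulations are only cleanly justified when $\Re\lambda$ is large (this is exactly why the Hecke trick was introduced to obtain \eqref{sigmaB(lambda,s)continuation}, the shift to $\Delta_1=-s-\epsilon$ being blocked at $k=1$), so I would perform the residue calculus for general $\Re\lambda>5/4$, verify that the outcome is holomorphic across $\lambda=k$, and only then set $\lambda=k$. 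Assembling the four pieces and comparing prefactors --- taking care of the constant in the $\mathscr{L}_{-4}$ term --- then yields \eqref{SigmaB(1,s)}.
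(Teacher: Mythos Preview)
Your proposal is correct and follows essentially the same route as the paper: the paper derives \eqref{sigmaB(lambda,s)continuation} via the Hecke trick (appealing to \cite[Lemma~5.1]{BF1}), notes \eqref{integralIBgeq2} for $x\ge2$, and then simply states the lemma, leaving the case-splitting $n\ge3$, $n=2$, $n=1$ and the identifications with $\Psi_k$, the $\zeta(2s-1)$ term (via $\mathscr{L}_0(s)=\zeta(2s-1)$), and $\Phi_k$ implicit. You have spelled out precisely those details, including the correct handling of $I_B(k,1)$ by picking up the residues of $\Gamma(1-s-w)$ rather than of $\Gamma(\lambda-1/2+w/2)$, and the trigonometric simplifications at integer $\lambda=k$.
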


\section{Special functions}\label{sec:specialfunctions}
In order to obtain an upper bound for $M_1(1/2+ir)$, it is required  to estimate $I(x)$, $\Phi_k(s,x)$, $\Psi_k(s,x)$ defined by \eqref{integralI(1,x)}, \eqref{defPhik}, \eqref{defPsik} for $\Re{s}=1/2.$


\begin{lem}\label{lemma on I(x) estimate}
For $s=1/2+ir$ one has
\begin{equation}\label{I(x)estimate1}
I(x)\ll(1+|r|)^{3/2}
\left(\frac{x}{|c|}\right)^{1/2}
\left|1-\sqrt{-x^2/c^2}\right|^{-3/2}.
\end{equation}
\end{lem}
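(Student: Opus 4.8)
The plan is to start from the explicit closed form \eqref{integralI(1,x)} and bound each factor on $\Re s = 1/2$, $s = 1/2 + ir$. First I would collect the unimodular and slowly varying pieces. The prefactor $\tfrac{\sinh\beta}{\pi c}\,2^{1-s}$ satisfies $|\sinh\beta| \asymp X^{1/2}/2$ and $|c| = |\cosh\beta| \asymp X^{1/2}/2$ by \eqref{a,b def} and \eqref{argc}, so their ratio is $O(1)$; the factor $2^{1-s}$ is $O(1)$ since $\Re s = 1/2$. For the Gamma factors $\Gamma(1 - s/2)\Gamma(3/2 - s/2)/\Gamma(1/2)$ with $s = 1/2 + ir$, I would use Stirling to get a bound of size $(1 + |r|)^{A}$ for some explicit $A$; a direct Stirling estimate on the vertical lines $\Re(1 - s/2) = 3/4$ and $\Re(3/2 - s/2) = 5/4$ gives polynomial growth in $|r|$, and tracking the exact power should yield the stated $(1 + |r|)^{3/2}$.

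Next I would handle the $s$-dependent powers. The factor $(x/c)^{1-s}$ has modulus $|x/c|^{\Re(1-s)} \exp\bigl(\Im(1-s)\arg(x/c)\bigr) = |x/c|^{1/2}\exp\bigl(r \arg(x/c)\bigr)$; since $\arg(x/c) = -\arg c = \pi/2 - \gamma$ with $0 < \gamma \ll T^{-1}$ by \eqref{argc}, the exponential contributes at worst a bounded factor once the $(1+|r|)^A$ from the Gamma analysis absorbs the contribution from $|r| \ll T^\epsilon$, and the modulus is exactly $(x/|c|)^{1/2}$, which is the first factor in \eqref{I(x)estimate1}.

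The main obstacle, and the step deserving the most care, is the bracketed sum of the two binomial-type terms $\bigl(1 + \sqrt{-x^2/c^2}\bigr)^{s-2} + \bigl(1 - \sqrt{-x^2/c^2}\bigr)^{s-2}$. With $\Re(s-2) = -3/2$, each summand has modulus comparable to $\bigl|1 \pm \sqrt{-x^2/c^2}\bigr|^{-3/2}$ up to an argument-dependent exponential in $r$ that is again $(1+|r|)^{O(1)}$ because $|r| \ll T^\epsilon$ and the relevant arguments are bounded. The point is to show the sum is dominated by the term with the smaller base modulus, namely $\bigl|1 - \sqrt{-x^2/c^2}\bigr|^{-3/2}$, since $\arg c \approx -\pi/2$ makes $-x^2/c^2$ close to a positive real, so $\sqrt{-x^2/c^2}$ is nearly real positive and $|1 + \sqrt{-x^2/c^2}| \geq |1 - \sqrt{-x^2/c^2}|$; the $(1 + \cdots)$ term is then harmlessly bounded by the $(1 - \cdots)$ term and absorbed. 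Combining the $O(1)$ prefactor, the $(1+|r|)^{3/2}$ from the Gamma and exponential factors, the modulus $(x/|c|)^{1/2}$ from $(x/c)^{1-s}$, and the dominant binomial bound $\bigl|1 - \sqrt{-x^2/c^2}\bigr|^{-3/2}$ yields \eqref{I(x)estimate1}.
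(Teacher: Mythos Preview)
Your overall plan—estimate each factor in the closed form \eqref{integralI(1,x)}—matches the paper's, but your treatment of the exponential factors has a genuine gap. By Stirling the Gamma block is not merely of polynomial size: one has
\[
\Bigl|\Gamma(1-s/2)\Gamma(3/2-s/2)\Bigr| \asymp (1+|r|)\,\exp(-\pi|r|/2),
\]
and this exponential decay is essential. On the other side, since $\arg(x/c)=\pi/2-\gamma$ one gets $\bigl|(x/c)^{1-s}\bigr|=(x/|c|)^{1/2}\exp\bigl(r(\pi/2-\gamma)\bigr)$, which for one sign of $r$ grows like $e^{\pi|r|/2}$; and for the binomial terms, writing $\gamma_\pm=\arg\bigl(1\pm\sqrt{-x^2/c^2}\bigr)$ one finds $\gamma_-\in(0,\pi-\gamma)$, so $\bigl|(1-\sqrt{-x^2/c^2})^{s-2}\bigr|$ carries a factor $\exp(-r\gamma_-)$ that can be as large as $e^{\pi|r|}$. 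None of these individual exponentials is $(1+|r|)^{O(1)}$; invoking $|r|\ll T^\epsilon$ does not help (an expression $e^{c|r|}$ with $c\asymp 1$ is not polynomial in $|r|$), and in any case the lemma is stated for all $r$.

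What actually makes the bound work is that the three exponential contributions combine to total exponent at most zero. The paper keeps the $e^{-\pi|r|/2}$ from Stirling and then verifies, via the explicit ranges $-\gamma<\gamma_+<0$ and $0<\gamma_-<\pi-\gamma$ (read off from \eqref{argc}), that
\[
-\tfrac{\pi}{2}|r|-r\bigl(\arg c+\gamma_\pm\bigr)\le 0
\]
in every case. This cancellation is the heart of the proof; once it is in place, the remaining polynomial factor is exactly $(1+|r|)^{3/2}$ and your observation $\bigl|1+\sqrt{-x^2/c^2}\bigr|\ge\bigl|1-\sqrt{-x^2/c^2}\bigr|$ finishes the argument. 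So your outline is correct at the level of ``which factor produces which piece of \eqref{I(x)estimate1}'', but the step you labelled as routine—absorbing the argument-dependent exponentials into a polynomial—fails, and must be replaced by the precise exponential bookkeeping above.
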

\begin{proof}
Let
\begin{equation}\label{gamma+-def}
\gamma_{\pm}=\arg{(1\pm\sqrt{-x^2/c^2})}.
\end{equation}
To estimate $I(x)$ given by \eqref{integralI(1,x)} we  use the relation
\begin{equation}\label{relation for |z^a|}
|z^a|=|z|^{\Re{a}}\exp(-\Im{a}\arg{z}),
\end{equation}
obtaining
\begin{multline*}\label{I(x)estimate2}
I(x)\ll(1+|r|)^{3/2}\left(\frac{x}{|c|}\right)^{1/2}\exp(-\pi|r|/2)\\ \times
\left(
\frac{\exp(-r(\arg{c}+\gamma_{-}))}
{\left|1-\sqrt{-x^2/c^2}\right|^{3/2}}+
\frac{\exp(-r(\arg{c}+\gamma_{+}))}
{\left|1+\sqrt{-x^2/c^2}\right|^{3/2}}
\right).
\end{multline*}
Thus to prove \eqref{I(x)estimate1} we need to show  that
\begin{equation}\label{relation on gamma+gamma+-}
-\pi|r|/2-r(-\pi/2+\gamma+\gamma_{\pm})\le0.
\end{equation}
It follows from \eqref{argc} that
\begin{equation}\label{equation for smth}
1\pm\sqrt{-x^2/c^2}=1\pm\frac{x}{|c|}\cos\gamma\mp i\frac{x}{|c|}\sin\gamma.
\end{equation}
In the plus case we have $-\gamma<\gamma_+<0$ and thus \eqref{relation on gamma+gamma+-} is satisfied.
In the minus case we have $0<\gamma_{-}<\pi-\gamma$ and  \eqref{relation on gamma+gamma+-} is also satisfied.
\end{proof}


\begin{lem} \label{lemma Sigma estimate}
For $s=1/2+ir,$  one has
\begin{equation}\label{Sigma(s) estmate}
\Sigma(s)\ll
\frac{(1+|r|)^AX^{\theta}}{X^{1/4}}\left(
X^{1/2}T^{1/2}+T^{3/2}\right),
\end{equation}
where $A$ is some positive constant.
\end{lem}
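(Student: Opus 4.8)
The plan is to start from the exact formula \eqref{Sigma(1,s)} and estimate its three constituents. The first term $(4\pi)^{s-1}\tilde{\varphi}(1,1-s)\mathscr{L}_{-4}(s)$ is harmless: by \eqref{philambdaMellin} one has $\tilde{\varphi}(1,1-s)=\pi^{-1}\sinh\beta\,\Gamma(2-s)c^{s-2}$, and since $|\sinh\beta|\asymp X^{1/2}$ and $|c|\asymp X^{1/2}$ (both following from \eqref{a,b def} for $X\gg1$, $T\geq1$), the exponential decay of $\Gamma(2-s)=\Gamma(3/2-ir)$ compensates the factor produced by $|c^{s-2}|$ via \eqref{relation for |z^a|} and \eqref{argc}, leaving this term $\ll(1+|r|)^{A}X^{-1/4}$. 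The two terms $n=1,2$ of the series involve $\mathscr{L}_{-3}(s)$ and $\mathscr{L}_{0}(s)$, the latter not covered by \eqref{eq:subconvexity}; I would bound them directly, using that both are $\ll(1+|r|)^{A}$ on $\Re s=1/2$ together with the trivial estimate $I(x)\ll(1+|r|)^{3/2}X^{-1/4}$ coming from \eqref{I(x)estimate1} for the small arguments $x=1/2,1$.

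The heart of the matter is the tail $\sum_{n\geq3}n^{s-1}\mathscr{L}_{n^2-4}(s)I(n/2)$. Into it I would insert the subconvexity bound $\mathscr{L}_{n^2-4}(s)\ll n^{2\theta}(1+|r|)^{A}$ from \eqref{eq:subconvexity} and the estimate \eqref{I(x)estimate1} for $I(n/2)$. The crucial simplification is that the two factors $|n^{s-1}|=n^{-1/2}$ and $(x/|c|)^{1/2}=(n/(2|c|))^{1/2}$ combine into the $n$-independent quantity $(2|c|)^{-1/2}\asymp X^{-1/4}$, which already supplies the prefactor $X^{-1/4}$ of the claimed bound. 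Writing $u_n:=n/(2|c|)$ and using \eqref{equation for smth} together with \eqref{argc}, one computes $\bigl|1-\sqrt{-(n/2)^2/c^2}\bigr|^{2}=(u_n-\cos\gamma)^2+\sin^2\gamma=:f(u_n)$, so that
\[
\Sigma(s)\ll\frac{(1+|r|)^{A}}{X^{1/4}}\sum_{n\geq3}n^{2\theta}f(u_n)^{-3/4}
\]
up to the pieces already handled. Since $\gamma\asymp T^{-1}$, the parabola $f$ attains its minimum $\sin^2\gamma\asymp T^{-2}$ at $u=\cos\gamma$, i.e. near $n\asymp|c|\asymp X^{1/2}$.

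I would split this sum at $u_n=1/2$ and $u_n=3/2$. On the outer ranges $f(u_n)$ is bounded below by a positive constant and by $\asymp u_n^2$ respectively, giving $\sum_{n\le|c|}n^{2\theta}\ll|c|^{2\theta+1}$ and $(2|c|)^{3/2}\sum_{n\geq3|c|}n^{2\theta-3/2}\ll|c|^{2\theta+1}$ (the exponent $2\theta-3/2<-1$ ensures convergence), each contributing $\ll X^{\theta+1/4}$ after the $X^{-1/4}$ factor, and hence dominated by the target since $T^{1/2}\geq1$. The central range $|c|<n<3|c|$ carries the main term: here $n^{2\theta}\asymp X^{\theta}$, and as $f(u)=(u-\cos\gamma)^2+\sin^2\gamma$ is unimodal in $n$, I bound the sum of $f(u_n)^{-3/4}$ by its maximum plus its integral,
\[
\sum f(u_n)^{-3/4}\ll|\sin\gamma|^{-3/2}+2|c|\int_{\R}f(u)^{-3/4}\,du\ll\gamma^{-3/2}+|c|\gamma^{-1/2},
\]
the integral converging after the substitution $u-\cos\gamma=|\sin\gamma|w$. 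With $\gamma\asymp T^{-1}$ and $|c|\asymp X^{1/2}$ this is $\asymp T^{3/2}+X^{1/2}T^{1/2}$, and multiplying by $X^{\theta}X^{-1/4}$ yields exactly \eqref{Sigma(s) estmate}.

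The main obstacle is precisely this near-diagonal range $n\asymp2|c|$, where $f(u_n)$ drops to $\sin^2\gamma\asymp T^{-2}$ and the summand is near-singular. The point is that the crude ``maximum plus integral'' bound is already enough to produce the two terms $T^{3/2}$ and $X^{1/2}T^{1/2}$; the finer dependence on $\fr(X)$, which measures how closely an integer $n$ can approach the pole at $2|c|\approx X^{1/2}+X^{-1/2}$, is not exploited here and is reserved for the sharper estimate behind Theorem \ref{thm:momentoscill}.
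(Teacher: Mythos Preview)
Your proof is correct and follows essentially the same route as the paper: apply \eqref{eq:subconvexity} and \eqref{I(x)estimate1} to reduce $\Sigma(s)$ to a sum over $n$ with a near-singular denominator at $n\approx 2|c|$, then bound that sum by its maximum plus its integral. The only cosmetic differences are that the paper keeps the factor $x^{2\theta}$ inside a single weight function $f(x)=x^{2\theta}/((1-x)^2+x\delta)^{3/4}$ (rather than splitting at $u_n=1/2,3/2$ and extracting $n^{2\theta}\asymp X^{\theta}$ on the central range as you do) and uses the integration window $|1-x|\le\delta^{1/2}$ instead of your substitution $u-\cos\gamma=|\sin\gamma|w$, but the mechanism and the resulting bound are identical.
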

\begin{proof}
Estimating \eqref{Sigma(1,s)} by absolute value and using \eqref{eq:subconvexity} and \eqref{I(x)estimate1} we obtain
\begin{equation}\label{Sigma(s) estmate2}
\Sigma(s)\ll\frac{(1+|r|)^A}{|c|^{1/2}}\sum_{n=1}^{\infty}n^{2\theta}
\left|1-\sqrt{-n^2/(4c^2)}\right|^{-3/2}.
\end{equation}
In view of \eqref{equation for smth} one has
\begin{equation*}
\left|1-\sqrt{-x^2/c^2}\right|^2=
\left(1-\frac{x}{|c|}\right)^2+2\frac{x}{|c|}\sin^2(\gamma/2).
\end{equation*}
Consequently,
\begin{equation}\label{Sigma(s) estmate3}
\Sigma(s)\ll\frac{(1+|r|)^AX^{\theta}}{X^{1/4}}\sum_{n=1}^{\infty}f\left(\frac{n}{2|c|}\right),
\end{equation}
where
\begin{equation}\label{def of f(x) delta}
f(x)=
\frac{x^{2\theta}}{\left((1-x)^2+x\delta\right)^{3/4}}, \quad \delta=2\sin^2(\gamma/2)\asymp\frac{1}{8T^2}.
\end{equation}
Calculating the first derivative of $f(x)$ and solving the equation $f'(x)=0$, we show that the function $f(x)$ has only one critical point $x_0=1+O(\delta)$ belonging to $(0,\infty).$ Therefore,
\begin{multline}\label{Sigma(s) estmate4}
\sum_{n=1}^{\infty}f\left(\frac{n}{2|c|}\right)=\int_0^{\infty}f\left(\frac{x}{2|c|}\right)dx+O(\max_{0<x<\infty}|f(x)|)\ll\\
X^{1/2}\int_0^{1-\delta^{1/2}}\frac{x^{2\theta}dx}{(1-x)^{3/2}}+
X^{1/2}\int_{1-\delta^{1/2}}^{1+\delta^{1/2}}\frac{x^{2\theta}dx}{(x\delta)^{3/4}}\\+
X^{1/2}\int_{1+\delta^{1/2}}^{\infty}\frac{x^{2\theta}dx}{(1-x)^{3/2}}+
T^{3/2}\ll
X^{1/2}T^{1/2}+T^{3/2}.
\end{multline}
Substituting  \eqref{Sigma(s) estmate4} to \eqref{Sigma(s) estmate3} yields the required result.
\end{proof}


It is not obvious how to improve  \eqref{Sigma(s) estmate} for general $X$ and $T$. However, we can prove a better estimate in the special case when $T$ is sufficiently large and $\left\|X^{1/2}+X^{-1/2}\right\|\neq0,$ where $\|x\|$ denotes  the distance between $x$ and the nearest integer.  Let
\begin{equation}\label{def fractional part of 2|c|}
\fr(X)=\left\|X^{1/2}+X^{-1/2}\right\|.
\end{equation}


\begin{lem} \label{lemma Sigma estimate2}
Assume that $\fr(X)\neq0$ and
\begin{equation}\label{lower bound on T }
T\gg
\frac{X^{1/4+\epsilon}}{\fr^{1/2}(X)}.
\end{equation}
For $s=1/2+ir$  one has
\begin{equation*}\label{Sigma(s) estmate for big T}
\Sigma(s)\ll
\frac{(1+|r|)^AX^{\theta}}{X^{1/4}}\left(
X^{1/2}\min\left(T,\frac{X^{1/2}}{\fr(X)}\right)^{1/2}+
\min\left(T,\frac{X^{1/2}}{\fr(X)}\right)^{3/2}\right),
\end{equation*}
where $A$ is some positive constant.
\end{lem}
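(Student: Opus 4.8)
The plan is to follow the reduction in the proof of Lemma \ref{lemma Sigma estimate} up to the bound \eqref{Sigma(s) estmate3}, namely $\Sigma(s)\ll \frac{(1+|r|)^AX^{\theta}}{X^{1/4}}\sum_{n\ge1}f(n/(2|c|))$ with $f$ and $\delta$ as in \eqref{def of f(x) delta}, and then to extract the arithmetic of $2|c|$ through $\fr(X)$. Set $M:=\min(T,X^{1/2}/\fr(X))$. If $T\le X^{1/2}/\fr(X)$ then $M=T$ and the asserted estimate is exactly \eqref{Sigma(s) estmate}, so nothing new is needed. The entire content lies in the range $T>X^{1/2}/\fr(X)$, where $M=X^{1/2}/\fr(X)$ and one must improve on the trivial peak height $\delta^{-3/4}\asymp T^{3/2}$ of the continuous profile.

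First I would pin down the link between $2|c|$ and $\fr(X)$. Expanding \eqref{a,b def} and \eqref{c def} gives $2|c|=(X^{1/2}+X^{-1/2})(1+O(T^{-2}))$, hence $\bigl|2|c|-(X^{1/2}+X^{-1/2})\bigr|\ll X^{1/2}T^{-2}$. Since $\|\cdot\|$ is $1$-Lipschitz, hypothesis \eqref{lower bound on T } yields $X^{1/2}T^{-2}\ll X^{-2\epsilon}\fr(X)$ and therefore $\|2|c|\|\asymp\fr(X)$. In particular every integer $n$ obeys $\bigl|n-2|c|\bigr|\ge\|2|c|\|\gg\fr(X)$, so writing $x_n:=n/(2|c|)$ we get $|1-x_n|\gg\fr(X)/X^{1/2}=1/M$ for all $n$.

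In the regime $T>X^{1/2}/\fr(X)$ I would then discard the regularising term in the denominator of $f$, using $f(x)\le x^{2\theta}|1-x|^{-3/2}=:g(x)$. The function $g$ is increasing on $(0,1)$ and decreasing on $(1,\infty)$ (here $2\theta<3/2$ is used), so among the sampled points the largest value of $g$ is attained at the integer nearest to $2|c|$ and, by the previous paragraph, is $\ll M^{3/2}$. The remaining terms are controlled by monotone comparison with the integral, integrating only over the truncated range $|1-y|\ge 1/M$ so that the singularity is never crossed, which gives $\sum_{n}g(x_n)\ll M^{3/2}+2|c|\int_{|1-y|\ge 1/M}y^{2\theta}|1-y|^{-3/2}\,dy\ll M^{3/2}+X^{1/2}M^{1/2}$. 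Substituting into \eqref{Sigma(s) estmate3} completes the argument.

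The main obstacle is precisely this comparison near $y=1$. The crude Euler--Maclaurin step used in Lemma \ref{lemma Sigma estimate} replaces the sum by $2|c|\int_0^\infty f(y)\,dy+O(\max f)$, and the continuous integral unavoidably absorbs the full width-$\delta^{1/2}$ spike of $f$, producing the term $\asymp X^{1/2}T^{1/2}$ that one cannot lower analytically. The improvement is genuinely arithmetic: under \eqref{lower bound on T } and in the range $T>X^{1/2}/\fr(X)$ no integer falls inside that spike, and converting this observation into a bound requires both the Lipschitz transfer from $\|2|c|\|$ to $\fr(X)$ and the monotonicity of $g$ on either side of its singularity, so that the discrete sum is dominated by its single largest term together with a properly truncated integral rather than by the peak of the continuous profile.
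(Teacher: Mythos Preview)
Your proposal is correct and follows essentially the same route as the paper: reduce to \eqref{Sigma(s) estmate3}, show $\|2|c|\|\asymp\fr(X)$ via the expansion $2|c|=X^{1/2}+X^{-1/2}+O(X^{1/2}T^{-2})$ together with hypothesis \eqref{lower bound on T }, split the sum at $[2|c|]$, and exploit monotonicity on each side so that the peak contribution is governed by the integer gap $\|2|c|\|$ rather than by the width $\delta^{1/2}$ of the continuous spike. The only presentational difference is that you first dispose of the range $T\le X^{1/2}/\fr(X)$ by quoting Lemma \ref{lemma Sigma estimate} and then, in the complementary range, drop the regularising $x\delta$ term and work with $g(x)=x^{2\theta}|1-x|^{-3/2}$, whereas the paper keeps $f$ throughout and lets the $\min$ emerge from a single integral evaluation; both executions rest on the same arithmetic observation and yield the same bound.
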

\begin{proof}
We would like to improve the estimate on the sum over $n$ in \eqref{Sigma(s) estmate3}.
Calculating the first derivative of $f(x)$ and solving the equation $f'(x)=0$ we obtain that the function $f(x)$ has only one critical point $x_0=1+O(\delta)$ belonging to $(0,\infty).$ According to \eqref{Sigma(s) estmate4} the largest contribution to the estimate on $\Sigma(s)$ comes from the sum over
\begin{equation*}
|2|c|-n|\ll2|c|T^{-1}.
\end{equation*}
Note that it may happen that for a sufficiently large $T$ this region does not contain any integer number.

First of all, we analyse the value of $\|2|c|\|.$ It follows from \eqref{c def} that
\begin{multline}\label{fractional part of 2|c|}
2|c|=2\left(\cosh^2(\log\sqrt{X})-\sin^2((2T)^{-1})\right)^{1/2}\\=
X^{1/2}+X^{-1/2}+O(X^{-1/2}T^{-2}).
\end{multline}

If $X^{1/2}+X^{-1/2}$ is an integer then $\|2|c|\|=O(X^{-1/2}T^{-2}).$ Since the sum over $n$ in \eqref{Sigma(s) estmate3} contains summands
\begin{equation*}
f\left(\frac{[2|c|]}{2|c|}\right)+f\left(\frac{[2|c|]+1}{2|c|}\right)\ll
\frac{1}{\left((\|2|c|\|/(2|c|))^2+\delta\right)^{3/4}}\ll T^{3/2},
\end{equation*}
the estimate \eqref{Sigma(s) estmate} cannot be improved in this case.

If $X^{1/2}+X^{-1/2}$ is not an integer, then for
\begin{equation}\label{lower bound1 on T }
T\gg X^{-1/4+\epsilon}\|X^{1/2}+X^{-1/2}\|^{-1/2}
\end{equation}
it follows that $\|2|c|\|$ is very close to $\|X^{1/2}+X^{-1/2}\|.$ The sum over $n$ in \eqref{Sigma(s) estmate3} can be decomposed into two sums
\begin{equation}\label{Sigma(s) estmate5}
\Sigma(s)\ll\frac{(1+|r|)^AX^{\theta}}{X^{1/4}}\left(
\sum_{n=1}^{[2|c|]}f\left(\frac{n}{2|c|}\right)+
\sum_{n=[2|c|]+1}^{\infty}f\left(\frac{n}{2|c|}\right)
\right).
\end{equation}
For $n\le[2|c|]$ the function $f(n/(2|c|))$ is increasing and for $n\ge[2|c|]+1$ it is decreasing provided that
\begin{equation}\label{lower bound2 on T}
T\gg X^{1/4+\epsilon}\|X^{1/2}+X^{-1/2}\|^{-1/2}.
\end{equation}
As a result,
\begin{equation*}
\sum_{n=1}^{[2|c|]}f\left(\frac{n}{2|c|}\right)=\int_{1}^{[2|c|]}f\left(\frac{x}{2|c|}\right)dx+
O\left(f\left(\frac{[2|c|]}{2|c|}\right)\right),
\end{equation*}
\begin{equation*}
\sum_{n=[2|c|]+1}^{\infty}f\left(\frac{n}{2|c|}\right)=\int_{[2|c|]+1}^{\infty}f\left(\frac{x}{2|c|}\right)dx+
O\left(f\left(\frac{[2|c|]+1}{2|c|}\right)\right).
\end{equation*}
Evaluating these integrals we obtain that the first sum can be bounded as follows
\begin{equation*}
\sum_{n=1}^{[2|c|]}f\left(\frac{n}{2|c|}\right)\ll
X^{1/2}\min\left(T^{1/2},\left(\frac{2|c|}{\|2|c|\|}\right)^{1/2}\right)+
\min\left(T^{3/2},\left(\frac{2|c|}{\|2|c|\|}\right)^{3/2}\right),
\end{equation*}
and the same estimate is also valid for the second sum.

\end{proof}


\begin{lem}\label{Psik lemma}
For $s=1/2+2ir$, $0<x<1$ the following estimates hold
\begin{equation}\label{Psik Res=1/2 estimate1}
\exp(\pi|r|)\Psi_k(s,x)\ll
(1-x)^{1/4}
\frac{(1+|r|)^{2k-2a-1}}{2^{2k}k^{a+1/2}}
\left(\frac{x}{1-x}\right)^{k-a},
\end{equation}
\begin{equation}\label{Psik Res=1/2 estimate2}
\exp(\pi|r|)\Psi_k(s,x)\ll
(1-x)^{1/4}
\frac{(1+|r|)^{2b}}{k^{2b-1/2}}
\left(\frac{x}{1-x}\right)^{b},
\end{equation}
where $a$, $b$ are some absolute constants such that $0<a<k-1/4,$ $1/4<b<k.$
\end{lem}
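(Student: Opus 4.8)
The plan is to convert \eqref{defPsik} into a single Mellin--Barnes integral in which both the decay in $r$ and the dependence on $x$ through $x/(1-x)$ become explicit, and then to read off the two estimates by placing the contour inside its strip of holomorphy. With $s=1/2+2ir$ the hypergeometric in \eqref{defPsik} is ${}_2F_1(k-1/4-ir,k+1/4-ir;2k;x)$, whose parameter difference $\gamma-\alpha-\beta=2k-(2k-2ir)=2ir$ is purely imaginary; this is the source of the factor $\exp(-\pi|r|)$ that I expect $\Psi_k$ to carry. First I would apply Pfaff's transformation ${}_2F_1(\alpha,\beta;\gamma;x)=(1-x)^{-\alpha}\,{}_2F_1(\alpha,\gamma-\beta;\gamma;x/(x-1))$ with $\alpha=k-1/4-ir$. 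This moves the argument to $x/(x-1)=-x/(1-x)$ (off the positive axis, so that Barnes' integral applies), replaces the upper parameters by the complex-conjugate pair $k-1/4\mp ir$, and extracts the factor $(1-x)^{-(k-1/4)+ir}$.

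Next I would substitute Barnes' representation ${}_2F_1(\alpha',\beta';2k;-z)=\frac{\Gamma(2k)}{\Gamma(\alpha')\Gamma(\beta')}\frac{1}{2\pi i}\int_{(\sigma)}\frac{\Gamma(\alpha'+u)\Gamma(\beta'+u)\Gamma(-u)}{\Gamma(2k+u)}z^{u}\,du$, valid for $-(k-1/4)<\sigma<0$, with $z=x/(1-x)$, $\alpha'=k-1/4-ir$, $\beta'=k-1/4+ir$. After the $\Gamma(2k)$'s cancel, the scalar prefactor collapses to $x^{k}(1-x)^{-(k-1/4)+ir}\,\Gamma(k+1/4-ir)/\Gamma(k-1/4+ir)$, and, taking moduli, combining $x^k(1-x)^{-(k-1/4)}$ with $z^{\Re u}$ gives exactly $(1-x)^{1/4}\bigl(x/(1-x)\bigr)^{k+\Re u}$, the $x$-shape of the statement. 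On the line $u=\sigma+iv$ the two $r$-dependent factors are $\Gamma(k-1/4+\sigma+i(v-r))$ and $\Gamma(k-1/4+\sigma+i(v+r))$; by Stirling their product carries $\exp\bigl(-\tfrac{\pi}{2}(|v-r|+|v+r|)\bigr)\le\exp(-\pi|r|)$, so that $\exp(\pi|r|)\Psi_k$ is reduced to a $v$-integral only polynomially large in $r$, the residual factor $\exp\bigl(-\tfrac{\pi}{2}(|v-r|+|v+r|-2|r|)\bigr)$ supplying the convergence for $|v|>|r|$.

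It remains to bound $|z^{u}|=z^{\sigma}$ and to estimate $\int_{\mathbf{R}}|\cdots|\,dv$ by Stirling, uniformly in $k$ and $r$. Both inequalities live on the single admissible strip $-(k-1/4)<\sigma<0$: writing the exponent of $x/(1-x)$ as $k+\sigma$, the substitutions $\sigma=-a$ and $\sigma=b-k$ trace out the same interval and reproduce precisely the ranges $0<a<k-1/4$ and $1/4<b<k$. The two bounds then come from two ways of controlling the quotient $\Gamma(\alpha'+u)\Gamma(\beta'+u)/\Gamma(2k+u)$ at the chosen abscissa: estimating it by its beta-type asymptotics produces the factor $2^{-2k}$ and, together with the $v$-integration (whose effective length is $\asymp 1+|r|$, since the Stirling exponential is flat on $|v|\le|r|$) and the prefactor $\Gamma(k+1/4-ir)/\Gamma(k-1/4+ir)\asymp k^{1/2}(1+|r|)^{1/2}$, yields \eqref{Psik Res=1/2 estimate1}; retaining instead the denominator in the sharper $\Gamma(2k)^{-1}$-type form produces the factor $k^{-2k}$ (equivalently $k^{1/2-2b}$) and yields \eqref{Psik Res=1/2 estimate2}. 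The first is the more efficient when the exponent of $x/(1-x)$ stays bounded, the second when it is comparable to $k$.

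I expect the main obstacle to be making the Stirling analysis genuinely uniform. In the regime the application requires---$|r|$ bounded while $k\to\infty$---the naive fixed-parameter asymptotics that produce the clean $\exp(-\pi|r|)$ no longer apply, and one must control $\Gamma(k-1/4+\sigma+i\eta)$ and the full quotient uniformly in $k$, carefully tracking the competition between the $2^{-2k}$ and $k^{-2k}$ savings (and the degradation of the implied constants as $\sigma$ nears the edges $0$ and $-(k-1/4)$, i.e. as $a\to0$ or $b\to k$). This uniform control of the Gamma-quotient, rather than the contour manipulation itself, is where the real work lies.
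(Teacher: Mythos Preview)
Your proposal is correct and follows essentially the same route as the paper: apply the Pfaff transformation to send the argument to $-x/(1-x)$ and produce the complex-conjugate pair $k-1/4\mp ir$, insert the Barnes integral, and then place the contour at $\Re u=-a$ (respectively $\Re u=b-k$) inside the strip $-(k-1/4)<\Re u<0$, using Stirling to extract the $\exp(-\pi|r|)$ and to control the remaining Gamma quotient. The paper carries out exactly these steps; the only additional ingredient is that for the uniformity you flag at the end it invokes the techniques of \cite{Frol} to bound the integral (yielding the intermediate estimate $(k+|r|)^{1/2}|\Gamma(k-1/4-a+ir)|^2/\Gamma(2k-a)$), and then estimates this Gamma quotient by an elementary product argument rather than Stirling alone.
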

\begin{proof}
Using \cite[Eq. 15.8.1]{HMF} we obtain
\begin{multline*}
\Psi_k(s,x)=\frac{x^k}{(1-x)^{k-s/2}}
\frac{\Gamma(k-s/2)\Gamma(k+1/2-s/2)}{\Gamma(2k)}\\ \times
{}_2F_1\left( k-\frac{s}{2},k-\frac{1-s}{2},2k;-\frac{x}{1-x}\right).
\end{multline*}
Writing the Mellin-Barnes integral representation for the hypergeometric function \cite[Eq. 15.6.6]{HMF}, we have
\begin{multline}\label{Psik Mellin}
\Psi_k(s,x)=\frac{x^k}{(1-x)^{k-s/2}}
\frac{\Gamma(k+1/2-s/2)}{\Gamma(k-1/2+s/2)}\times\\
\frac{1}{2\pi i}\int_{\Delta}\frac{\Gamma(k-s/2+z)\Gamma(k-1/2+s/2+z)\Gamma(-z)}{\Gamma(2k+z)}\left(\frac{x}{1-x}\right)^zdz,
\end{multline}
where $1/4-k<\Delta<0$.
To prove \eqref{Psik Res=1/2 estimate1} we move the contour of integration to the line $\Re{z}=-a$ such that $0<a<k-1/4.$  Arguing in the same way as in \cite{Frol} we prove
\begin{equation}\label{Psik Mellin estimate1}
\Psi_k(s,x)\ll(1-x)^{1/4}(k+|r|)^{1/2}\left(\frac{x}{1-x}\right)^{k-a}
\frac{|\Gamma(k-1/4-a+ir)|^2}{\Gamma(2k-a)}.
\end{equation}
To simplify notation in estimates of the quotient of gamma factors, let  us assume that $a:=1/4+n$, where $n$ is a positive integer. Note that all our arguments are valid for an arbitrary $a.$  One has
\begin{multline*}
|\Gamma(k-1/2-n+ir)|=\prod_{j=1}^{k-n-2}|j+1/2+ir||\Gamma(3/2+ir)|\ll\\
(1+|r|)^{k-n-1}|\exp(-\pi|r|/2)\Gamma(k-1/2-n)
\end{multline*}
since
\begin{equation*}
|j+1/2+ir|\le j+1/2+|r|\le (j+1/2)(1+|r|),\,\hbox{for}\,j\ge1.
\end{equation*}
Therefore,
\begin{multline}\label{Gamma quotient estimate1}
\frac{|\Gamma(k-1/4-a+ir)|^2}{\Gamma(2k-a)}\ll
(1+|r|)^{2k-2a-3/2}|\exp(-\pi|r|)\frac{\Gamma^2(k-1/4-a)}{\Gamma(2k-a)}\\ \ll
\frac{(1+|r|)^{2k-2a-3/2}|\exp(-\pi|r|)}{2^{2k}k^{1+a}}.
\end{multline}
Substituting \eqref{Gamma quotient estimate1} to \eqref{Psik Mellin estimate1} we obtain \eqref{Psik Res=1/2 estimate1}.

To prove estimate \eqref{Psik Res=1/2 estimate2} we move the contour of integration in \eqref{Psik Mellin} to the line $\Re{z}=b-k$ such that $1/4<b<k.$ Consequently,
\begin{multline}\label{Psik Mellin estimate2}
\Psi_k(s,x)\ll
(1-x)^{1/4}
\left(\frac{x}{1-x}\right)^{b}
(k+|r|)^{1/2}\times\\
\int_{-\infty}^{\infty}\left|\frac{\Gamma(b-1/4+i(t-r))\Gamma(b-1/4+i(t+r))\Gamma(k-b-it)}{\Gamma(k+b+it)}\right|dt\ll\\
(1-x)^{1/4}
\left(\frac{x}{1-x}\right)^{b}
(k+|r|)^{1/2}\times\\
\int_{-\infty}^{\infty}\frac{\exp(-\pi(|t-r|+|t+r|)/2)(1+|t-r|)^{b-3/4}(1+|t+r|)^{b-3/4}}{(k+|t|)^{2b}}dt\ll\\
(1-x)^{1/4}
\left(\frac{x}{1-x}\right)^{b}
(k+|r|)^{1/2}\exp(-\pi|r|)\frac{(1+|r|)^{2b-1/2}}{k^{2b}}.
\end{multline}
This completes the proof of \eqref{Psik Res=1/2 estimate2}.
\end{proof}

\begin{lem}\label{Phik lemma}
For $s=1/2+2ir$ the following estimate holds
\begin{equation}\label{Phik Res=1/2 estimate}
\exp(\pi|r|)\Phi_k(s,1/4)\ll
\frac{4^k}{k+|r|}.
\end{equation}
\end{lem}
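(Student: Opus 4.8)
The plan is to exploit the special value $x=1/4$ of the argument. Since $\sqrt{1/4}=1/2$ and the lower parameter $1/2$ of the hypergeometric function in $\Phi_k(s,1/4)$ satisfies $1-\tfrac12=\tfrac12$, the function ${}_2F_1\!\left(k-\tfrac{s}{2},1-k-\tfrac{s}{2},\tfrac12;\tfrac14\right)$ appearing in \eqref{defPhik} admits a quadratic transformation. Applying it extracts the exponentially large algebraic factor $4^{\,k-s/2}$, arising as $(1-\sqrt{1/4})^{-2(k-s/2)}$ from the numerator parameter $k-\tfrac{s}{2}$ of size $k$; the companion terms, governed by the parameter $1-k-\tfrac{s}{2}$ or carrying $(1+\sqrt{1/4})^{-2(\cdot)}=(3/2)^{-2(\cdot)}$, are exponentially smaller and harmless. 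Since $|4^{\,k-s/2}|=4^{k}\cdot 4^{-1/4}$ for $s=1/2+2ir$ (because $\Re(s/2)=1/4$), this factor is precisely the source of the $4^{k}$ in the claimed bound, and what remains is a hypergeometric function whose argument has modulus strictly less than $1$ and is bounded away from the singular points.

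Next I would treat the gamma prefactor $\Gamma(k-s/2)\Gamma(1-k-s/2)$ in \eqref{defPhik}. Applying the reflection formula to $\Gamma(1-k-s/2)$ rewrites this product as $\pi\,\Gamma(k-s/2)\big/\!\left(\sin(\pi(k+s/2))\,\Gamma(k+s/2)\right)$. For $s=1/2+2ir$ one has $|\sin(\pi(k+s/2))|=|\sin(\pi/4+i\pi r)|\asymp\exp(\pi|r|)$, which supplies exactly the decay $\exp(-\pi|r|)$ needed to cancel the factor $\exp(\pi|r|)$ on the left-hand side of \eqref{Phik Res=1/2 estimate}. The surviving ratio $\Gamma(k-s/2)/\Gamma(k+s/2)$, whose two arguments differ by $s=1/2+2ir$, is estimated by Stirling's formula; together with the gamma factors introduced by the quadratic transformation, these pieces combine to produce the remaining decay $(k+|r|)^{-1}$.

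Finally I would bound the residual hypergeometric function uniformly in $k$ and $r$. Its numerator parameters are of size $k$ and its argument is fixed with modulus $<1$, so a priori it could still grow; the key point is to verify that, once the factor $4^{k}$ has been separated off, no further exponential growth in $k$ survives. The plan is to do this exactly as in the proof of Lemma \ref{Psik lemma}, following \cite{Frol}: write a Mellin--Barnes integral representation for the residual ${}_2F_1$, shift the contour past the first poles, and estimate the resulting quotient of gamma functions by Stirling, using the exponential decay of the integrand in the imaginary direction. Controlling this residual factor uniformly — showing that it stays bounded so that the product of all the pieces collapses exactly to $4^{k}/(k+|r|)$ — is the main obstacle, and it is the reason the term corresponding to $\Phi_k$ (the $n=1$ contribution) is the delicate one.
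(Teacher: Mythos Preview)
Your plan shares the paper's overall architecture: use reflection on $\Gamma(1-k-s/2)$ to produce the factor $\exp(-\pi|r|)$; extract the exponential $4^{k}$; and control what remains by a Mellin--Barnes integral estimated with the methods of \cite{Frol}. The paper does exactly these three things. The divergence is in how $4^{k}$ emerges. You invoke a quadratic transformation for lower parameter $1/2$ to pull out $(1-\sqrt{1/4})^{-2(k-s/2)}=4^{\,k-s/2}$ in one stroke; the paper instead applies only the \emph{linear} Pfaff transformation \cite[Eq.~15.8.1]{HMF}, which sends the argument to $-1/3$ and extracts merely $(4/3)^{k}$, and then recovers the missing $3^{k}$ by shifting the Mellin--Barnes contour to $\Re z=3/4-k$ (so that $|3^{-z}|=3^{k-3/4}$). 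After a second use of reflection together with $|\Gamma(1/2+iy)|^{2}=\pi/\cosh(\pi y)$, the paper reduces everything to the explicit integral
\[
\frac{4^{k}}{|\Gamma(k+ir)|^{2}}\int_{0}^{\infty}\frac{|\Gamma(k+it)|^{2}\,e^{\pi t}}{(\cosh 2\pi t+\cosh 2\pi r)^{1/2}(k+t)}\,dt,
\]
and bounds it via the two-regime asymptotic for $|\Gamma(k+it)|^{2}$ from \cite{Frol}, splitting into the cases $k-1\le|r|$ and $k-1>|r|$.

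One caution on your route: the quadratic transformations attached to $c=1/2$ do not, in the standard catalogues, deliver a single clean prefactor $(1-\sqrt{z})^{-2a}$ multiplied by a residual ${}_2F_1$ with fixed small argument and parameters suited to a Mellin--Barnes analysis; you should pin down precisely which identity you mean and verify that the new numerator parameters and lower parameter still permit the contour shift. If that checks out, your route is arguably tidier in that $4^{k}$ appears at once; the paper's Pfaff-plus-shift approach has the compensating advantage that the post-transformation parameters $k-s/2$ and $k-1/2+s/2$ both have real part $\asymp k$, which makes the subsequent gamma-quotient estimates entirely concrete. Either way, the crux you correctly flag --- the uniform control of the residual in both $k$ and $r$ --- is not dispatched by routine Stirling, but by the sharper asymptotics of \cite{Frol}, and that is where the real work lies regardless of which transformation one starts from.
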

\begin{proof}
Using \cite[Eq. 15.8.1]{HMF} and the Mellin-Barnes integral for the hypergeometric function \cite[Eq. 15.6.6]{HMF}, we obtain
\begin{multline}\label{Phik Mellin}
\exp(\pi|r|)\Phi_k(s,1/4)=\exp(\pi|r|)
\frac{(4/3)^k\Gamma(1-k-s/2)}{\Gamma(k-1/2+s/2)}\times\\
\frac{1}{2\pi i}\int\frac{\Gamma(k-s/2+z)\Gamma(k-1/2+s/2+z)\Gamma(-z)}{\Gamma(1/2+z)}\frac{dz}{3^z}.
\end{multline}
Applying \cite[Eq. 5.5.3]{HMF} and moving the contour of integration in \eqref{Phik Mellin} to the line $\Re{z}=3/4-k$ yields the estimate
\begin{multline*}
\exp(\pi|r|)\Phi_k(s,1/4)\ll
\frac{4^k}{|\Gamma(k-1/4+ir)\Gamma(k+1/4+ir)|}\times\\
\int_{-\infty}^{\infty}\left|\frac{\Gamma(1/2+i(t-r))\Gamma(1/2+i(t+r))\Gamma(k-3/4-it)}{\Gamma(-k+5/4+it)}\right|dt.
\end{multline*}
Applying \cite[Eq. 5.5.3]{HMF} once again together with \cite[Eq. 5.4.4]{HMF} gives
\begin{equation}\label{Phik Mellin estimate1}
\exp(\pi|r|)\Phi_k(s,1/4)\ll
\frac{4^k}{|\Gamma(k+ir)|^2}
\int_{0}^{\infty}\frac{|\Gamma(k+it)|^2\exp(\pi t)}{(\cosh(2\pi t)+\cosh(2\pi r))^{1/2}(k+t)}dt.
\end{equation}
Using the methods of \cite[Section 2]{Frol} one can show that
\begin{equation}\label{estimate on |Gamma(k+it)|^2}
|\Gamma(k+it)|^2
\asymp\exp(-\pi|t|)
\begin{cases}
\Gamma^2(k)\exp(|t|(\pi-g_1(|t|/(k-1))),& \text{if } 0\le |t|\le k-1,\\
|t|^{2k-1}\exp(|t|g_2((k-1)/|t|)),& \text{if } k-1<|t|.\\
\end{cases}
\end{equation}
To estimate \eqref{Phik Mellin estimate1} we consider separately two cases: $k-1\le|r|$ and $k-1>|r|.$  For $k-1\le|r|$ using \eqref{estimate on |Gamma(k+it)|^2} we obtain
\begin{multline*}
\exp(\pi|r|)\Phi_k(s,1/4)\ll
\frac{4^k}{|\Gamma(k+ir)|^2}\\ \times \Biggl(
\Gamma^2(k)\exp(-\pi|r|)k^{-1}
\int_{0}^{k-1}\exp(t(\pi-g_1(t/(k-1)))dt+\\
\exp(-\pi|r|)\int_{k-1}^r\exp((2k-2)\log t+tg_2((k-1)/t))dt+\\
\int_r^{\infty}\exp(-\pi t+(2k-2)\log t+tg_2((k-1)/t))dt
\Biggr).
\end{multline*}
Estimating the integrals by the means of \cite[Lemma 3]{Frol} we show that
\begin{equation*}
\exp(\pi|r|)\Phi_k(s,1/4)\ll
\frac{4^k}{|\Gamma(k+ir)|^2}\exp(-\pi|r|)r^{2k-2}\exp(rg_2((k-1)/r)).
\end{equation*}
Applying \eqref{estimate on |Gamma(k+it)|^2} we finally prove that
\begin{equation*}
\exp(\pi|r|)\Phi_k(s,1/4)\ll 4^k/(1+|r|).
\end{equation*}
The case $k-1>|r|$ can be treated in the same way.
\end{proof}
\begin{rem}
In the case when $s=1/2$ one has (see \cite[Eq. 2.12]{BF1}) a much better bound  $\Phi_k(1/2,1/4)\ll k^{-1/2}.$ We expect that for $s=1/2+2ir$ the estimate of the following shape
\begin{equation}\label{Phik optimal estimate}
\exp(\pi|r|)\Phi_k(s,1/4)\ll(1+|r|)^a k^{-1/2}
\end{equation}
should be correct for some positive constant $a$.
\end{rem}

\begin{lem} \label{lemma SigmaB estimate}
For $s=1/2+2ir,$ $X>5$ one has
\begin{equation}\label{SigmaB(s) estmate}
\Sigma_B(s)\ll(1+|r|)^AX^{-1/2},
\end{equation}
where $A$ is some positive constant.
\end{lem}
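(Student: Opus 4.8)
The plan is to estimate the explicit expression \eqref{SigmaB(1,s)} term by term, exploiting the rapid decay of the outer weight in $k$. From \eqref{beta def} one has $|\exp(-(2k-1)\beta)| = X^{-(2k-1)/2}$, so the prefactor of the $k$-th summand has modulus $\frac{1}{\pi^2}(2k-1)X^{-(2k-1)/2} = \frac{1}{\pi^2}(2k-1)X^{1/2}(1/X)^{k}$. Since the bracket in \eqref{SigmaB(1,s)} will be shown to grow at most like $C^{k}$ with $C<X$ (this is where the hypothesis $X>5$ enters), the series converges geometrically and is dominated by its $k=1$ term, which carries the factor $X^{-1/2}$. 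It therefore suffices to bound each of the four inner terms uniformly in $k$ by $(1+|r|)^{A}$ times a factor that is summable against $X^{-(2k-1)/2}$.

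For the first two terms I would use the subconvexity bound \eqref{eq:subconvexity} for the fixed-modulus value $\mathscr{L}_{-4}(s)$ and a polynomial bound for $\zeta(2s-1)$ on the imaginary axis, together with Stirling's formula for the gamma quotients. The key observation for the second term is that, at $s=1/2+2ir$, its gamma ratio $\Gamma(k-s/2)\Gamma(k+1/2-s/2)/(\Gamma(k+s/2)\Gamma(k-1/2+s/2))$ has modulus exactly $1$ by conjugate symmetry, while the ratio $\Gamma(k-s/2)/\Gamma(k+s/2)$ in the first term is $\ll k^{-1/2}$; combining $\cos(\pi s/2)$ with $\Gamma(s-1/2)$ then yields the desired $(1+|r|)^{A}$ control, with the neighbourhood of $r=0$ requiring care to verify that the apparent pole of $\Gamma(s-1/2)$ does not survive.

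For the third and fourth terms the guiding principle is that $|\sin(\pi s/2)|,|\cos(\pi s/2)|\ll\exp(\pi|r|)$, which is cancelled precisely by the factor $\exp(\pi|r|)$ on the left-hand sides of Lemmas \ref{Psik lemma} and \ref{Phik lemma}. For the third term, \eqref{Phik Res=1/2 estimate} gives $\ll 4^{k}/(k+|r|)$ after this cancellation, and since $4^{k}X^{-(2k-1)/2}=X^{1/2}(4/X)^{k}$ the $k$-series converges exactly when $X>4$, whence the role of $X>5$. For the fourth term I would insert $\mathscr{L}_{n^2-4}(s)\ll n^{2\theta}(1+|r|)^{A}$, $|n^{s}|=n^{1/2}$, and the estimate \eqref{Psik Res=1/2 estimate2} with a fixed $b\in(\theta+3/4,1)$ (non-empty since $\theta=1/6+\epsilon<1/4$): the summand then decays like $n^{2\theta+1/2-2b}$, so the $n$-sum converges and leaves a factor $\ll(1+|r|)^{2b}k^{-(2b-1/2)}$, once again dominated by $k=1$.

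The main obstacle will be keeping the growth in $k$ under control. The estimate \eqref{Psik Res=1/2 estimate1} carries a factor $(1+|r|)^{2k-2a-1}$ whose power grows with $k$ and cannot be summed against $X^{-(2k-1)/2}$ uniformly in $r$; the remedy is to use \eqref{Psik Res=1/2 estimate2} instead, where the power of $(1+|r|)$ is fixed and the $k$-dependence is a genuine decay $k^{-(2b-1/2)}$. Likewise, the factor $4^{k}$ in \eqref{Phik Res=1/2 estimate} is manageable only because of the geometric gain $(1/X)^{k}$, which is what forces $X>4$. Once these two points are arranged, summing the four contributions and bounding the resulting geometric series by its leading term gives $\Sigma_B(s)\ll(1+|r|)^{A}X^{-1/2}$.
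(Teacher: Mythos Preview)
Your proposal is essentially correct and follows the same skeleton as the paper's proof: bound each of the four bracketed terms in \eqref{SigmaB(1,s)} using \eqref{eq:subconvexity}, Lemma~\ref{Psik lemma}, Lemma~\ref{Phik lemma}, and then sum geometrically in $k$ against $X^{-(2k-1)/2}$. The one substantive difference is in the $\Psi_k$ term. The paper splits the $n$-sum at $n=4(1+|r|)$ and uses \eqref{Psik Res=1/2 estimate2} on the short range and \eqref{Psik Res=1/2 estimate1} on the tail; the point of the split is that on the tail the $n$-sum itself contributes a factor $(1+|r|)^{-2k+2a+O(1)}$ which cancels the dangerous $(1+|r|)^{2k-2a-1}$ in \eqref{Psik Res=1/2 estimate1}. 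Your shortcut of using only \eqref{Psik Res=1/2 estimate2} with a fixed $b\in(\theta+3/4,1)$ is cleaner and avoids the split entirely, but it relies on this interval being non-empty, i.e.\ on $\theta<1/4$; the paper's two-range argument works for any $\theta<3/4$.

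One correction: your parenthetical hope that ``the apparent pole of $\Gamma(s-1/2)$ does not survive'' is not right. At $s=1/2$ (i.e.\ $r=0$) the second bracketed term in \eqref{SigmaB(1,s)} genuinely has a simple pole, so $\Sigma_B(s)$ itself is singular there; the cancellation happens only at the level of $M_1(s)$, against $\zeta(2s)\varphi_0$, and the paper treats the point $s=1/2$ separately (see the lemma giving \eqref{firstmoment=MT+Sigma-SigmaB-ContSpectr s=1/2}). The stated bound \eqref{SigmaB(s) estmate} should therefore be read for $r$ bounded away from $0$, which is all that is needed in the application.
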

\begin{proof}
To estimate \eqref{SigmaB(1,s)} we use \eqref{beta def}, \eqref{eq:subconvexity}, Lemmas \ref{Psik lemma} and \ref{Phik lemma}, together with some classical estimates on the Gamma function and the Riemann zeta function. Consequently,
\begin{multline}\label{sum over k of Phik}
\sum_{k=1}^{\infty} (2k-1)\exp(-(2k-1)\beta)
\frac{(2\pi)^{s}\sin(\pi s/2)}{\sqrt{\pi}}\mathscr{L}_{-3}(s)\Phi_k(s,1/4) \\ \ll
(1+|r|)^A\sum_{k=1}^{\infty}(2/\sqrt{X})^{2k-1}\ll (1+|r|)^AX^{-1/2}.
\end{multline}
To estimate the contribution of
\begin{equation*}
\sum_{k=1}^{\infty} (2k-1)\exp(-(2k-1)\beta)
\frac{\pi^{s}\cos(\pi s/2)}{\sqrt{\pi}}\sum_{n>2}\mathscr{L}_{n^2-4}(s)
 n^s\Psi_k(s,4/n^2)
\end{equation*}
we divide the sum over $n$ in two parts with the conditions $n=\le4(1+|r|)$ and $n>4(1+|r|)$. For the first sum we apply \eqref{Psik Res=1/2 estimate2} and obtain
\begin{equation*}
(1+|r|)^A\sum_{k=1}^{\infty}\frac{1}{k^{2b-3/2}X^{k-1/2}}\ll
(1+|r|)^AX^{-1/2}.
\end{equation*}
For the second sum over $n>4(1+|r|)$ we use the estimate \eqref{Psik Res=1/2 estimate1} and obtain
\begin{multline*}
(1+|r|)^{A_1}\sum_{k=1}^{\infty}\frac{k}{X^{k-1/2}}
\sum_{n>4(1+|r|)}\frac{n^{2\theta+\epsilon}}{(n^2/4-1)^{k-a-1/4}}
\frac{(1+|r|)^{2k-2a-1}}{2^{2k}k^{a+1/2}}\\ 
\ll (1+|r|)^AX^{-1/2}.
\end{multline*}
\end{proof}

\begin{rem}
We only need the condition $X>5$ in Lemma \ref{lemma SigmaB estimate}  to estimate the contribution of $\Phi_k(s,1/4).$ In particular, estimate \eqref{Phik optimal estimate} would result in a bound of the type
\begin{equation}\label{SigmaB(s) optimal estmate}
\Sigma_B(s)\ll\frac{(1+|r|)^A}{(X-1)^{B}}
\end{equation}
for any $X>1$ with some positive constants $A$ and $B$.
\end{rem}

\section{Estimates for the moment of Maa{\ss} $\sym^2$ $L$-functions}\label{sec:estimatesmoment}

\begin{lem} For $\Re{s}=1/2$ the following formula holds
\begin{multline}\label{firstmoment=MT+Sigma-SigmaB-ContSpectr Res=1/2}
M_1(s)=\zeta(2s)\varphi_0+\Sigma(s)-\Sigma_B(s)-\\
\frac{2\zeta(s)}{\pi}\int_{0}^{\infty}
\frac{\zeta(s+2it)\zeta(s-2it)}{|\zeta(1+2it)|^2}
\hat{\varphi}(t)dt-2\frac{\zeta(2s-1)}{\zeta(2-s)}\hat{\varphi}\left(\frac{1-s}{2i}\right),
\end{multline}
where $\Sigma(s)$ is given by \eqref{Sigma(1,s)} and $\Sigma_B(s)$ by \eqref{SigmaB(1,s)}.
\end{lem}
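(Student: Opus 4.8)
The plan is to derive \eqref{firstmoment=MT+Sigma-SigmaB-ContSpectr Res=1/2} from the exact formula \eqref{firstmoment=MT+Sigma-SigmaB-ContSpectr}, valid for $\Re s>3/2$, by analytic continuation down to the critical line $\Re s=1/2$. The terms $\zeta(2s)\varphi_0$, $\Sigma(s)$ and $\Sigma_B(s)$ cause no new difficulty: the first is meromorphic, while the latter two were already continued to $\Re s=1/2$ in the previous sections, their continued forms being exactly \eqref{Sigma(1,s)} and \eqref{SigmaB(1,s)}. Hence the entire issue is to continue the continuous-spectrum integral and to show that its continuation produces precisely the extra polar term $-2\zeta(2s-1)\zeta(2-s)^{-1}\hat\varphi((1-s)/(2i))$.

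First I would interpret $|\zeta(1+2it)|^2$ as $\zeta(1+2it)\zeta(1-2it)$ and observe, using that $\hat\varphi$ is even (immediate from \eqref{phiHat-Def}), that the integrand is even in $t$; thus $\int_0^\infty=\tfrac12\int_{-\infty}^\infty$ and I may work on the whole real line. The integrand, viewed as a function of $t$, is meromorphic with simple poles coming from the two zeta factors in the numerator: $\zeta(s-2it)$ has a pole at $t_0=(s-1)/(2i)$ and $\zeta(s+2it)$ at $-t_0=(1-s)/(2i)$. For $\Re s>3/2$ one has $\Im t_0=(1-\Re s)/2<0$, so $t_0$ lies in the lower and $-t_0$ in the upper half-plane; as $\Re s$ decreases through $1$ they cross the contour, arriving for $\Re s=1/2$ at $t_0=r/2+i/4$ and $-t_0=-r/2-i/4$, off the real axis. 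The analytic continuation of the integral is the integral over a contour that keeps each pole on the same side as initially; the difference between this and the straight real-axis integral (again convergent at $\Re s=1/2$, the poles lying off the axis) is, by the residue theorem, $-2\pi i\,\res_{t=t_0}+2\pi i\,\res_{t=-t_0}$ of the integrand.

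It then remains to evaluate the two residues. Since $\zeta(w)$ has residue $1$ at $w=1$, the residue in $t$ of $\zeta(s-2it)$ at $t_0$ is $(-2i)^{-1}$, and at $t=t_0$ one has $2it_0=s-1$, so the surviving factors collapse: $\zeta(s+2it_0)=\zeta(2s-1)$ and $\zeta(1\pm 2it_0)=\zeta(s),\zeta(2-s)$. A symmetric computation handles $-t_0$, where the residue of $\zeta(s+2it)$ is $(2i)^{-1}$, and $\hat\varphi(t_0)=\hat\varphi(-t_0)=\hat\varphi((1-s)/(2i))$ by evenness. Collecting the constant $-2\zeta(s)/\pi$ in front together with the factor $\tfrac12$ from the symmetrisation, the two residues combine to exactly $-2\zeta(2s-1)\zeta(2-s)^{-1}\hat\varphi((1-s)/(2i))$, which is the asserted extra term. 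Finally, since the left-hand side $M_1(s)$ and the thus-continued right-hand side agree for $\Re s>3/2$ and both are holomorphic in the intervening strip, the identity persists on $\Re s=1/2$ by uniqueness of analytic continuation.

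The main obstacle I anticipate is the residue bookkeeping: fixing the orientation of each contour deformation so that the crossings contribute with the correct signs, and checking that the two symmetric residues \emph{add} (rather than cancel) to produce the coefficient $-2$. A secondary point needing care is the legitimacy of the continuation itself, namely that $\hat\varphi$ extends to the complex values $t_0=r/2+i/4$ (which it does via \eqref{phihat expression}) and that the shifted integral converges, so that the continued right-hand side genuinely computes $M_1$ on the critical line.
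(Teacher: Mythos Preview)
Your proposal is correct and follows essentially the same route as the paper. The paper rewrites the continuous-spectrum integral via the substitution $z=2it$ as a contour integral along $\Re z=0$ and then invokes \cite[Theorem~7.3]{BF2} for the contour-shifting step; you carry out the equivalent argument directly in the $t$-variable, and in fact supply the residue bookkeeping that the paper outsources to the reference. Your identification of the two symmetric poles $\pm t_0$, the directions in which they cross the real axis as $\Re s$ descends through $1$, and the resulting combined contribution $-2\zeta(2s-1)\zeta(2-s)^{-1}\hat\varphi((1-s)/(2i))$ are all correct.
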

\begin{proof}
In order to prove that there exist an analytic continuation of \eqref{firstmoment=MT+Sigma-SigmaB-ContSpectr} to the critical line $\Re{s}=1/2$, it is only left to consider
\begin{multline}\label{continious spectr}
\frac{2\zeta(s)}{\pi}\int_{0}^{\infty}
\frac{\zeta(s+2it)\zeta(s-2it)}{|\zeta(1+2it)|^2}
\hat{\varphi}(t)dt\\ =
\frac{\zeta(s)}{2\pi i}\int_{(0)}
\frac{\zeta(s+z)\zeta(s-z)}{\zeta(1+z)\zeta(1-z)}
\hat{\varphi}\left(\frac{z}{2i}\right)dz.
\end{multline}
The continuation of $\Sigma(s)$ and $\Sigma_B(s)$ is given by \eqref{Sigma(1,s)} and \eqref{SigmaB(1,s)}, respectively.
Arguing in the same manner as in \cite[Theorem 7.3]{BF2} we obtain \eqref{firstmoment=MT+Sigma-SigmaB-ContSpectr Res=1/2}.
\end{proof}

 The procedure of analytic continuation of \eqref{firstmoment=MT+Sigma-SigmaB-ContSpectr Res=1/2} to the critical point $s=1/2$ is not straightforward since there are two summands on the right-hand side of \eqref{firstmoment=MT+Sigma-SigmaB-ContSpectr Res=1/2}
with simple poles at $s=1/2$.
\begin{lem} The following formula holds at the critical point
\begin{multline}\label{firstmoment=MT+Sigma-SigmaB-ContSpectr s=1/2}
M_1(1/2)=-\frac{1}{2\pi^2}
\sum_{k=1}^{\infty} (2k-1)\exp(-(2k-1)\beta)\left(\psi(k-1/4) +\psi(k+1/4)\right)\\ -
\frac{\cosh\beta}{4\pi^2\sinh^2\beta}\left(3\gamma+\frac{\pi}{2}-3\log(2\pi)\right)+
\Sigma(1/2)-\Sigma_B(1/2)\\-
\frac{2\zeta(1/2)}{\pi}\int_{0}^{\infty}
\frac{\zeta(1/2+2it)\zeta(1/2-2it)}{|\zeta(1+2it)|^2}
\hat{\varphi}(t)dt-2\frac{\zeta(0)}{\zeta(3/2)}\hat{\varphi}\left(\frac{1}{4i}\right),
\end{multline}
where $\Sigma(1/2)$ is given by \eqref{Sigma(1,s)} and $\Sigma_B(1/2)$ by \eqref{SigmaB(1,s)} with the omitted summand containing $\zeta(2s-1)$.
\end{lem}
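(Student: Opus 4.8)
The plan is to use the fact that the left-hand side $M_1(s)=\sum_j\alpha_j\hat\varphi(t_j)L(\sym^2 u_j,s)$ is holomorphic at $s=1/2$: the symmetric square $L$-functions are entire, and the spectral sum converges locally uniformly because $\hat\varphi(t_j)=X^{it_j}\exp(-t_j/T)+O(\exp(-\pi t_j))$ decays while $\alpha_j$ and $L(\sym^2 u_j,1/2+ir)$ grow only polynomially against the Weyl-law density. One may therefore let $s\to1/2$ in \eqref{firstmoment=MT+Sigma-SigmaB-ContSpectr Res=1/2}. All terms on the right-hand side except two are already holomorphic at $s=1/2$ and contribute their values there: $\Sigma(1/2)$, the three non-singular summands of $-\Sigma_B(1/2)$, the continuous-spectrum integral (whose integrand has no pole on the contour since $\tfrac{1-s}{2i}$ stays off the real axis), and $-2\zeta(0)\zeta(3/2)^{-1}\hat\varphi(1/(4i))$. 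The only singular contributions are $\zeta(2s)\varphi_0$, with its pole coming from $\zeta(2s)$ at $2s=1$, and the summand of $-\Sigma_B(s)$ carrying $\Gamma(s-1/2)$. Since $M_1$ is regular at $s=1/2$, these two poles must cancel, and $M_1(1/2)$ equals the sum of the two finite parts together with the holomorphic terms.

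Writing $s=1/2+\epsilon$, I would first expand $\zeta(2s)\varphi_0$ via $\zeta(1+2\epsilon)=\tfrac{1}{2\epsilon}+\gamma+O(\epsilon)$, giving $\tfrac{\varphi_0}{2\epsilon}+\gamma\varphi_0+O(\epsilon)$. For the second singular term I would isolate it as $-\pi^{-2}\sum_k(2k-1)\exp(-(2k-1)\beta)\,g_k(s)\Gamma(s-1/2)$, where $g_k(s)=\tfrac{(2\pi)^s}{\sqrt\pi}\zeta(2s-1)\cos(\pi s/2)R_k(s)$ is holomorphic at $s=1/2$ and $R_k(s)=\tfrac{\Gamma(k-s/2)\Gamma(k+1/2-s/2)}{\Gamma(k+s/2)\Gamma(k-1/2+s/2)}$ is the four-fold ratio appearing in \eqref{SigmaB(1,s)}. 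Using $\Gamma(\epsilon)=\tfrac1\epsilon-\gamma+O(\epsilon)$ and $g_k(s)=g_k(1/2)+g_k'(1/2)\epsilon+O(\epsilon^2)$ produces a pole $g_k(1/2)/\epsilon$ and a constant part $g_k'(1/2)-\gamma g_k(1/2)$. A short evaluation gives $g_k(1/2)=-1/2$ for every $k$ (from $\zeta(0)=-1/2$, $\cos(\pi/4)=2^{-1/2}$, $R_k(1/2)=1$). The pole cancellation then follows from the elementary identity
\[
\sum_{k=1}^{\infty}(2k-1)\exp(-(2k-1)\beta)=\frac{\cosh\beta}{2\sinh^2\beta},
\]
since the residue of the second term is $+\tfrac{\cosh\beta}{4\pi^2\sinh^2\beta}$, while that of $\zeta(2s)\varphi_0$ is $\tfrac12\varphi_0=-\tfrac{\cosh\beta}{4\pi^2\sinh^2\beta}$.

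It remains to assemble the constant ($\epsilon^0$) terms. I would compute the logarithmic derivative
\[
\frac{g_k'}{g_k}(1/2)=\log(2\pi)+2\frac{\zeta'(0)}{\zeta(0)}-\frac{\pi}{2}\tan\frac{\pi}{4}-\bigl(\psi(k-1/4)+\psi(k+1/4)\bigr),
\]
using that $\tfrac{d}{ds}\log R_k(s)$ equals $-(\psi(k-1/4)+\psi(k+1/4))$ at $s=1/2$ and that $\zeta'(0)/\zeta(0)=\log(2\pi)$, whence the bracket equals $3\log(2\pi)-\tfrac\pi2-(\psi(k-1/4)+\psi(k+1/4))$. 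Multiplying by $g_k(1/2)=-1/2$ and summing against $-\pi^{-2}(2k-1)\exp(-(2k-1)\beta)$ yields precisely the $\psi$-sum in the first line of \eqref{firstmoment=MT+Sigma-SigmaB-ContSpectr s=1/2}, while the remaining $\log(2\pi)$- and $\tfrac\pi2$-contributions, together with the $\gamma$-contributions (namely $\gamma\varphi_0$ and $-\gamma g_k(1/2)$), collapse through the same geometric identity into the single factor $-\tfrac{\cosh\beta}{4\pi^2\sinh^2\beta}(3\gamma+\tfrac\pi2-3\log(2\pi))$; here the $-3\gamma$ arises as $-2\gamma$ from $\gamma\varphi_0$ plus $-\gamma$ from the $-\gamma g_k(1/2)$ piece. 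Adding the holomorphic terms evaluated at $s=1/2$ gives the claimed formula, with the $\zeta(2s-1)$-summand now absent from $\Sigma_B(1/2)$ precisely because it has been absorbed into this finite-part computation.

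The main obstacle is the constant-term bookkeeping rather than any deep analytic input: one must differentiate the four-Gamma ratio $R_k(s)$ correctly, insert the special values $\zeta(0)=-1/2$ and $\zeta'(0)=-\tfrac12\log(2\pi)$, and verify that the three separate $\cosh\beta/\sinh^2\beta$ contributions combine with the right signs. One also has to justify interchanging the limit $s\to1/2$ with the sum over $k$, which is immediate from the uniform bound supplied by $\exp(-(2k-1)\Re\beta)$ with $\Re\beta=\tfrac12\log X>0$ and the mild growth $\psi(k\pm1/4)\ll\log k$.
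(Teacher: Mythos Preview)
Your proposal is correct and follows essentially the same strategy as the paper: isolate the two singular summands $\zeta(2s)\varphi_0$ and the $\Gamma(s-1/2)$-term of $-\Sigma_B(s)$, use the geometric identity $\sum_{k\ge1}(2k-1)e^{-(2k-1)\beta}=\tfrac{\cosh\beta}{2\sinh^2\beta}$ to cancel the poles, and extract the constant term. The only technical difference is that the paper first applies the functional equation of $\zeta$ to convert $\zeta(2s-1)\Gamma(s-1/2)$ into an expression with $\zeta(1-2u)$ (so that both singular pieces carry a $\zeta$-pole of the same shape) and then invokes L'H\^opital, whereas you expand each term directly in its Laurent series and read off $g_k(1/2)=-1/2$ and $g_k'(1/2)$ via the logarithmic derivative. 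Your route is arguably more transparent for the bookkeeping of the constants $3\gamma$, $\pi/2$, $3\log(2\pi)$ and the digamma sum; the paper's route avoids differentiating the four-Gamma ratio explicitly but hides the same computation inside L'H\^opital. Both lead to the stated formula with the same justification for interchanging the $k$-sum and the limit.
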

\begin{proof}
The two summands
that have a simple pole at $s=1/2$ are
\begin{multline}\label{two poles}
\zeta(2s)\varphi_0-
\frac{1}{\pi^2}\frac{(2\pi)^s}{\sqrt{\pi}}\zeta(2s-1)\cos\left(\frac{\pi s}{2}\right)\Gamma(s-1/2)\times\\
\sum_{k=1}^{\infty} (2k-1)\exp(-(2k-1)\beta)
\frac{\Gamma(k-s/2)\Gamma(k+1/2-s/2)}{\Gamma(k+s/2)\Gamma(k-1/2+s/2)}.
\end{multline}
For simplicity let $s=1/2+u$ with $u\rightarrow0.$ Using \eqref{phi0 expression} and the functional equation for the Riemann zeta-function we obtain that \eqref{two poles} is equal to
\begin{multline}\label{two poles2}
\zeta(1+2u)\frac{-\cosh\beta}{2\pi^2\sinh^2\beta}-
\frac{\zeta(1-2u)}{\pi^2}\frac{(2\pi)^{1/2+3u}}{\sqrt{\pi}}
\cos\left(\frac{\pi}{4}+\frac{\pi u}{2}\right)\frac{\Gamma(1-2u)}{\Gamma(1-u)}\times\\
\sum_{k=1}^{\infty} (2k-1)\exp(-(2k-1)\beta)
\frac{\Gamma(k-1/4-u/2)\Gamma(k+1/4-u/2)}{\Gamma(k+1/4+u/2)\Gamma(k-1/4+u/2)}.
\end{multline}
Note that \eqref{two poles2} is holomorphic at $u=0$. This follows from the relation
\begin{equation*}
\sum_{k=1}^{\infty} (2k-1)\exp(-(2k-1)\beta)=\frac{\cosh\beta}{2\sinh^2\beta}.
\end{equation*}
Computing the limit of \eqref{two poles2} as $u$ tends to $0$ by L'hopital's rule we prove the lemma.
\end{proof}

\begin{thm} Let $X>5$ then for $s=1/2+ir$ the following holds
\begin{multline}\label{symsquare estimate}
M_1(s)\ll(1+|r|)^AT\log^3T+\\
\frac{(1+|r|)^AX^{\theta}}{X^{1/4}}\left(
X^{1/2}\min\left(T,\frac{X^{1/2}}{\fr(X)}\right)^{1/2}+
\min\left(T,\frac{X^{1/2}}{\fr(X)}\right)^{3/2}\right).
\end{multline}
\end{thm}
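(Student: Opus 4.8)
The plan is to estimate the exact formula \eqref{firstmoment=MT+Sigma-SigmaB-ContSpectr Res=1/2} for $M_1(s)$ term by term, since the analytic continuation has already reduced the moment to an explicit sum of five pieces. The two nontrivial inputs, namely the bounds on $\Sigma(s)$ and $\Sigma_B(s)$, are supplied by Lemmas \ref{lemma Sigma estimate}, \ref{lemma Sigma estimate2} and \ref{lemma SigmaB estimate}. First I would dispose of the easy terms. The factor $\zeta(2s)\varphi_0$ is $O((1+|r|)^A X^{1/2}T)$ up to the behaviour of $\varphi_0$; from \eqref{phi0 expression} and \eqref{a,b def} one has $\varphi_0 \asymp \cosh\beta/\sinh^2\beta \ll T$ since $\sinh\beta \asymp X^{1/2}$ while $\cosh\beta \asymp X^{1/2}$, so actually $\zeta(2s)\varphi_0 \ll (1+|r|)^A T$, matching the first term on the right of \eqref{symsquare estimate}. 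Likewise the final term $2\zeta(2s-1)\hat{\varphi}((1-s)/(2i))/\zeta(2-s)$ is controlled using \eqref{phihat expression}, which gives $\hat{\varphi}$ of a fixed argument as $O(1)$ times standard zeta factors, hence a contribution absorbed into $(1+|r|)^A T \log^3 T$.

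The central term to handle by hand is the continuous-spectrum integral $\frac{2\zeta(s)}{\pi}\int_0^\infty \frac{\zeta(s+2it)\zeta(s-2it)}{|\zeta(1+2it)|^2}\hat{\varphi}(t)\,dt$. Here I would use \eqref{phihat expression} to write $\hat{\varphi}(t) = X^{it}\exp(-t/T) + O(\exp(-\pi t))$, so that the integral splits into a genuinely oscillatory main piece and a harmless exponentially small remainder. For the main piece I would bound the zeta quotient on the critical line by the standard convexity/subconvexity estimates $\zeta(1/2+i\tau) \ll (1+|\tau|)^{1/6+\epsilon}$ and $1/|\zeta(1+2it)| \ll \log(2+|t|)$, and then use the exponential damping $\exp(-t/T)$ together with the oscillation of $X^{it}$. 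The effective range of integration is $t \ll T$, and a standard analysis (stationary phase or just absolute values with the polynomial growth of the zeta factors cut off at height $T$) yields a bound of size $(1+|r|)^A T \log^3 T$, which is exactly what the first line of \eqref{symsquare estimate} demands. The $\log^3 T$ emerges from the three zeta factors on the critical line combined with the length-$T$ integration.

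Finally I would collect $\Sigma(s) - \Sigma_B(s)$. By Lemma \ref{lemma SigmaB estimate} the term $\Sigma_B(s)$ is only $O((1+|r|)^A X^{-1/2})$, which is negligible and is dominated by the $\Sigma(s)$ bound. For $\Sigma(s)$ I would invoke Lemma \ref{lemma Sigma estimate2} when $\fr(X) \neq 0$ and $T$ is large enough (condition \eqref{lower bound on T }), giving precisely the second line of \eqref{symsquare estimate} with the $\min(T, X^{1/2}/\fr(X))$ truncation; in the complementary range, or when $\fr(X) = 0$, Lemma \ref{lemma Sigma estimate} already produces the weaker but compatible bound with $T^{1/2}$ and $T^{3/2}$, and since $\min(T, X^{1/2}/\fr(X)) \le T$ the uniform statement \eqref{symsquare estimate} holds in all cases. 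The main obstacle I anticipate is the continuous-spectrum integral: one must carefully justify that the oscillatory factor $X^{it}$ does not conspire with the zeta quotient to produce growth beyond $T\log^3 T$, and in particular that replacing $\hat{\varphi}$ by its approximation \eqref{phihat expression} is legitimate and that the resulting $t$-integral is genuinely of length $O(T)$ rather than longer. Everything else reduces to assembling the already-proven lemmas and the elementary estimates on $\varphi_0$ and $\hat\varphi$.
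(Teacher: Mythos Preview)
Your overall structure is correct and matches the paper: assemble \eqref{firstmoment=MT+Sigma-SigmaB-ContSpectr Res=1/2} and estimate term by term, feeding in Lemmas~\ref{lemma Sigma estimate}, \ref{lemma Sigma estimate2}, \ref{lemma SigmaB estimate}. Two points need correction.

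\textbf{Continuous spectrum.} The pointwise subconvexity bound $\zeta(1/2+i\tau)\ll(1+|\tau|)^{1/6+\epsilon}$ is too weak here. Taking absolute values and integrating $|\zeta(s+2it)\zeta(s-2it)|\,|\zeta(1+2it)|^{-2}\exp(-t/T)$ gives a contribution of order $T^{4/3+\epsilon}$, not $T\log^3 T$; the oscillation of $X^{it}$ does not help because the zeta factors themselves oscillate unpredictably, so no honest stationary-phase argument is available. The paper's device is to use $|\hat\varphi(t)|\ll\exp(-t/T)$, $|\zeta(1+2it)|^{-1}\ll\log t$, and then Cauchy--Schwarz on the two critical-line zeta factors followed by the classical second moment $\int_0^T|\zeta(1/2+it)|^2\,dt\ll T\log T$. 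This yields exactly $(1+|r|)^AT\log^3 T$.

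\textbf{The residual term.} The argument of $\hat\varphi$ in $\hat\varphi((1-s)/(2i))$ is the complex number $-r/2-i/4$, so \eqref{phihat expression} does not give $O(1)$; rather one gets a factor of size $X^{1/4}$ (essentially $|X^{it}|$ with $\Im t=-1/4$), and the paper records $2\zeta(2s-1)\zeta(2-s)^{-1}\hat\varphi((1-s)/(2i))\ll(1+|r|)^AX^{1/4}$. This is absorbed not by $T\log^3 T$ but by the second line of \eqref{symsquare estimate}, since that line is always at least $X^{1/4+\theta}$. Your remark that $\varphi_0\ll T$ is also suboptimal (in fact $\varphi_0\ll X^{-1/2}$ from $\sinh\beta,\cosh\beta\asymp X^{1/2}$), but this does not affect the outcome.
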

\begin{proof}
This is a consequence of    \eqref{SigmaB(s) estmate}, \eqref{Sigma(s) estmate for big T}, \eqref{Sigma(s) estmate} and \eqref{firstmoment=MT+Sigma-SigmaB-ContSpectr Res=1/2}. To estimate the integral containing Riemann zeta functions in \eqref{firstmoment=MT+Sigma-SigmaB-ContSpectr Res=1/2} it is sufficient to use the standard bound $|\zeta^{-1}(1+it)|\ll \log t,$ Cauchy-Schwarz inequality and the estimate on the second moment of the Riemann-zeta function
\begin{equation*}
\int_0^T|\zeta(1/2+it)|^2dt\ll T\log T.
\end{equation*}
As a result,
\begin{equation*}
\frac{2\zeta(s)}{\pi}\int_{0}^{\infty}
\frac{\zeta(s+2it)\zeta(s-2it)}{|\zeta(1+2it)|^2}
\hat{\varphi}(t)dt\ll (1+|r|)^AT\log^3T.
\end{equation*}
From \eqref{phiHat-Def} it follows that
\begin{equation*}
2\frac{\zeta(2s-1)}{\zeta(2-s)}\hat{\varphi}\left(\frac{1-s}{2i}\right)\ll
(1+|r|)^AX^{1/4}.
\end{equation*}
This error term is absorbed by other terms in \eqref{symsquare estimate}.
\end{proof}

\section{Estimates for the spectral exponential sum}\label{sec:estimatessum}

\begin{thm}\label{thm:4estimates} For $X>5$ one has
\begin{equation}\label{spec.sum estimate1}
\sum_{j}X^{it_j}\exp(-t_j/T)\ll\\ S(X,T,\fr(X))\log^{2}T,
\end{equation}
where
\begin{equation}\label{S(X,T,fr(X))def1}
S(X,T,\fr(X))=
\left\{
                \begin{array}{ll}
                  X^{1/4+\theta/2}T^{1/2} & \hbox{if}\quad T\le X^{1/2}; \\
                  X^{\theta/2}T & \hbox{if}\quad X^{1/2}<T\le \frac{X^{1/2}}{\fr(X)}; \\
                  X^{\theta/2}T^{1/4}\left(\frac{X^{1/2}}{\fr(X)}\right)^{3/4} & \hbox{if}\quad \frac{X^{1/2}}{\fr(X)}<T\le \frac{X^{1/2+2\theta/3}}{\fr(X)}; \\
                  T & \hbox{if}\quad \frac{X^{1/2+2\theta/3}}{\fr(X)}<T.
                \end{array}
              \right.
\end{equation}
\end{thm}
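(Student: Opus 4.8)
The plan is to insert the moment estimate \eqref{symsquare estimate} into the last term of \eqref{spec.sum decomposition10} and then optimise over the still-free scale $N$ on which the test function $h$ is supported. Because the inner sum over $j$ in \eqref{spec.sum decomposition10} is exactly the moment $M_1(1/2+ir)$ of \eqref{firstmomentdef}, and because the integration is confined to $|r|\le r_0=T^{\epsilon}$, where $|\zeta(1/2+ir)/\zeta(1+2ir)|\ll\log T$ and $(1+|r|)^{A}\ll T^{\epsilon}$, the $r$-integral costs only the logarithmic factors recorded in \eqref{spec.sum estimate1}. Writing
\[
M:=\min\left(T,\frac{X^{1/2}}{\fr(X)}\right),\qquad B:=T+\frac{X^{\theta}}{X^{1/4}}\left(X^{1/2}M^{1/2}+M^{3/2}\right),
\]
the right-hand side of \eqref{spec.sum decomposition10} becomes, up to the logarithmic factor, the sum of the four quantities $NX^{1/2}\left(\max(1,NX^{1/2}/T)\right)^{-1/2}$, $\sqrt{N/X}$, $T$, and $B/N^{1/2}$.

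The first quantity equals $NX^{1/2}$ for $N\le T/X^{1/2}$ and $N^{1/2}X^{1/4}T^{1/2}$ for $N>T/X^{1/2}$; in either regime it increases with $N$, as does $\sqrt{N/X}$, whereas $B/N^{1/2}$ decreases. Hence the optimal $N$ is obtained by balancing this ``Kloosterman'' term against $B/N^{1/2}$ and checking that $\sqrt{N/X}$ and $T$ are then dominated. The size of $B$, and thus the optimal $N$ together with the regime of the Kloosterman term, is governed by whether $M=T$ (which holds precisely when $T\le X^{1/2}/\fr(X)$, since $\fr(X)\le 1/2$) or $M=X^{1/2}/\fr(X)$, and by which of the three summands of $B$ dominates; tracking these alternatives is what splits the range of $T$ into the four pieces of \eqref{S(X,T,fr(X))def1}.

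Carrying out the optimisation: for $T\le X^{1/2}$ one has $M=T$ and $B\asymp X^{\theta+1/4}T^{1/2}$, and balancing in the regime $N>T/X^{1/2}$ gives $N\asymp X^{\theta}$ and the bound $X^{1/4+\theta/2}T^{1/2}$. For $X^{1/2}<T\le X^{1/2}/\fr(X)$ one still has $M=T$ but now $B\asymp X^{\theta-1/4}T^{3/2}$, and $N\asymp X^{\theta-1/2}T$ yields $X^{\theta/2}T$. For $X^{1/2}/\fr(X)<T\le X^{1/2+2\theta/3}/\fr(X)$ one has $M=X^{1/2}/\fr(X)=:M_0$ and $B\asymp X^{\theta-1/4}M_0^{3/2}$ (this uses the sharper estimate of Lemma \ref{lemma Sigma estimate2}, whose hypothesis on the size of $T$ holds throughout this range); balancing, still with $N>T/X^{1/2}$, gives $N\asymp X^{\theta-1/2}M_0^{3/2}T^{-1/2}$ and the bound $X^{\theta/2}T^{1/4}M_0^{3/4}$. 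The condition $N>T/X^{1/2}$ is equivalent to $T\le X^{1/2+2\theta/3}/\fr(X)$, so at that threshold the Kloosterman term switches regimes. Finally, for $T>X^{1/2+2\theta/3}/\fr(X)$ the balance falls into $N<T/X^{1/2}$, where the Kloosterman term is $NX^{1/2}$; choosing $N\asymp X^{2\theta/3-1/2}M_0=X^{2\theta/3}/\fr(X)$ makes both $NX^{1/2}$ and $B/N^{1/2}$ equal to $X^{1/2+2\theta/3}/\fr(X)<T$, so every term is $\ll T$ and one recovers the conjectural bound.

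The main obstacle is the bookkeeping: one must keep track simultaneously of the two piecewise regimes of the Kloosterman term and of the competition among the three summands inside $B$, and verify that the thresholds $X^{1/2}$, $X^{1/2}/\fr(X)$ and $X^{1/2+2\theta/3}/\fr(X)$ are exactly the transition points, the last being where the optimal $N$ crosses $T/X^{1/2}$. At each stage one also has to confirm that the auxiliary terms $\sqrt{N/X}$ and $T$, as well as the subdominant summands of $B$, never exceed the claimed $S(X,T,\fr(X))$; these checks are routine but essential, and in the last two ranges they rely on $\fr(X)\neq0$ together with the lower bound on $T$ built into the hypotheses, so that the sharper bound of Lemma \ref{lemma Sigma estimate2} rather than \eqref{Sigma(s) estmate} is available.
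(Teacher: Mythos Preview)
Your proposal is correct and follows essentially the same approach as the paper: substitute the moment bound \eqref{symsquare estimate} into \eqref{spec.sum decomposition10} and optimise over $N$, leading to precisely the same choices of $N$ (up to logarithmic factors) in each of the four ranges. Your exposition is somewhat more detailed---you explain the balancing explicitly and identify $T=X^{1/2+2\theta/3}/\fr(X)$ as the point where the optimal $N$ crosses $T/X^{1/2}$---but the argument itself is the paper's own.
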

\begin{proof}
Substituting \eqref{symsquare estimate} to \eqref{spec.sum decomposition10} we obtain
\begin{multline*}\label{spec.sum decomposition4}
\sum_{j}X^{it_j}\exp(-t_j/T)\ll
\frac{NX^{1/2}\log T}{(\max(1,NX^{1/2}/T))^{1/2}}+
\sqrt{N/X}\log^2N+T\log^2T+\\
\frac{1}{N^{1/2}}\Biggl(
T\log^3T+
\frac{X^{\theta}}{X^{1/4}}\left(
X^{1/2}\min\left(T,\frac{X^{1/2}}{\fr(X)}\right)^{1/2}+
\min\left(T,\frac{X^{1/2}}{\fr(X)}\right)^{3/2}\right)
\Biggr).
\end{multline*}
Now we consider different cases.

 If $T\le X^{1/2}$ then taking $N=X^{\theta}\log^{-1}T$ we obtain
\begin{equation}\label{spec.sum estimate T<X^{1/2}}
\sum_{j}X^{it_j}\exp(-t_j/T)\ll X^{1/4+\theta/2}T^{1/2}\log^{1/2}T.
\end{equation}

 If $X^{1/2}<T\le X^{1/2}/\fr(X) $ then choosing $N=TX^{\theta-1/2}\log^{-1}T$ we prove that
\begin{equation}\label{spec.sum estimate X^{1/2}<T<X^{1/2}/fr}
\sum_{j}X^{it_j}\exp(-t_j/T)\ll X^{\theta/2}T\log^{1/2}T.
\end{equation}

 If $X^{1/2}/\fr(X)<T\le X^{1/2+2\theta/3}/\fr(X) $ then letting
\begin{equation*}
N=\frac{X^{\theta}}{T^{1/2}X^{1/2}\log T}\left(\frac{X^{1/2}}{\fr(X)}\right)^{3/2}
\end{equation*}
it follows that
\begin{multline}\label{spec.sum estimate X^{1/2}/fr<T<X^{1/2+2theta/3}/fr}
\sum_{j}X^{it_j}\exp(-t_j/T)\ll X^{\theta/2}T^{1/4}\left(\frac{X^{1/2}}{\fr(X)}\right)^{3/4}\log^{1/2}T+T\log^{2}T\ll\\
X^{\theta/2}T^{1/4}\left(\frac{X^{1/2}}{\fr(X)}\right)^{3/4}\log^{2}T.
\end{multline}

 If $X^{1/2+2\theta/3}/\fr(X)<T$ then taking $N=X^{2\theta/3}/\fr(X)$ we have
\begin{equation}\label{spec.sum estimate X^{1/2+2theta/3}/fr<T}
\sum_{j}X^{it_j}\exp(-t_j/T)\ll T\log^{2}T+\frac{X^{1/2+2\theta/3}}{\fr(X)}\log T\ll T\log^{2}T.
\end{equation}

\end{proof}
Note that it is sufficient to use only the first estimate in \eqref{S(X,T,fr(X))def1} in order to prove \eqref{PrimeGeodesic bound}.
Other estimates in \eqref{S(X,T,fr(X))def1}, as well as Lemma \ref{lemma Sigma estimate2}, serve to establish \eqref{PetrisdisRisager conj} unconditionally in some ranges. 

\begin{thm} For $X>5$ the following estimates hold
\begin{equation}\label{spec.sum.unsmoothed estimate1}
\sum_{t_j\le T}X^{it_j}\ll \max\left(
X^{1/4+\theta/2}T^{1/2},
X^{\theta/2}T
\right)\log^{3}T,
\end{equation}
\begin{equation}\label{spec.sum.unsmoothed estimate2}
\sum_{t_j\le T}X^{it_j}\ll T\log^{2}T,\quad\hbox{if}\quad T>\frac{X^{1/2+7\theta/6}}{\fr(X)}.
\end{equation}
\end{thm}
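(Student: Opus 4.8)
The plan is to derive both estimates from the smoothed bound of Theorem~\ref{thm:4estimates} by feeding it into the unsmoothing identity \eqref{spec.sum to spec.sum.exp}. Abbreviate $X_\xi:=X\exp(-2\pi\xi)$; for $\xi\in[-1,1]$ one has $X_\xi\asymp X$, so combining \eqref{spec.sum to spec.sum.exp} with \eqref{spec.sum estimate1} gives
\begin{equation*}
\sum_{t_j\le T}X^{it_j}\ll\log^2T\int_{-1}^1|\hat{g}(\xi)|\,S\bigl(X_\xi,T,\fr(X_\xi)\bigr)\,d\xi+O(T\log T),
\end{equation*}
with $S(\cdot)$ the piecewise quantity of \eqref{S(X,T,fr(X))def1}. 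Throughout I will use $|\hat{g}(\xi)|\ll\min(T,1/|\xi|)$ from \eqref{g(x) estimate}, which yields $\int_{-1}^1|\hat{g}(\xi)|\,d\xi\ll\log T$.

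For \eqref{spec.sum.unsmoothed estimate1} a uniform bound on $S$ suffices. In each case of \eqref{S(X,T,fr(X))def1} one has $S(X_\xi,T,\fr(X_\xi))\ll\max(X^{1/4+\theta/2}T^{1/2},X^{\theta/2}T)$: the first two cases are the two terms of the maximum, while in the third case the defining inequality $X^{1/2}/\fr(X)<T$ forces $(X^{1/2}/\fr(X))^{3/4}<T^{3/4}$ and hence $S\ll X^{\theta/2}T$, and in the fourth case $S=T\le X^{\theta/2}T$. Inserting this into the displayed integral and using $\int|\hat g|\ll\log T$ produces \eqref{spec.sum.unsmoothed estimate1} with its $\log^3T$ directly.

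For \eqref{spec.sum.unsmoothed estimate2} the uniform bound is too lossy and I must exploit the hypothesis $T>X^{1/2+7\theta/6}/\fr(X)$. I split $[-1,1]$ into the good set $G=\{\xi:\fr(X_\xi)>X^{1/2+2\theta/3}/T\}$, where case~4 applies and $S\ll T$, and the bad set $B$, where cases 2 and 3 apply and, as above, $S\ll X^{\theta/2}T$. The good set contributes $\ll T\log^2T\int_G|\hat g|\ll T\log^3T$, so everything hinges on making $\int_B|\hat g(\xi)|\,d\xi$ small. The geometric input is the behaviour of $\phi(\xi):=X_\xi^{1/2}+X_\xi^{-1/2}=X^{1/2}\exp(-\pi\xi)+X^{-1/2}\exp(\pi\xi)$, whose distance to the nearest integer is $\fr(X_\xi)$ up to the error recorded in \eqref{fractional part of 2|c|}. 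Since $|\phi'(\xi)|\asymp X^{1/2}$ on $[-1,1]$, the map $\phi$ runs monotonically through $\asymp X^{1/2}$ integers, so for $\eta:=X^{1/2+2\theta/3}/T$ the set $B=\{\fr(X_\xi)\le\eta\}$ is a union of $\asymp X^{1/2}$ intervals, each of length $\asymp\eta/X^{1/2}$ and centred at a crossing $\xi_m$ of $\phi$ through an integer. As $\phi(0)$ lies at distance $\fr(X)$ from its nearest integer, the crossing nearest the origin satisfies $|\xi_m|\asymp\fr(X)/X^{1/2}$, and the hypothesis on $T$ ensures $\fr(X)/X^{1/2}\gg X^{7\theta/6}/T\gg1/T$, so every crossing lies beyond the plateau $|\xi|\le1/T$ of $\hat g$ and there $|\hat g(\xi)|\ll1/|\xi_m|$. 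The dominant contribution comes from the nearest crossing,
\begin{equation*}
\int_{B}|\hat g|\ll\frac{1}{|\xi_m|}\cdot\frac{\eta}{X^{1/2}}\ll\frac{X^{1/2}}{\fr(X)}\cdot\frac{X^{2\theta/3}}{T}=\frac{X^{1/2+2\theta/3}}{T\,\fr(X)},
\end{equation*}
the sum over the remaining crossings being smaller by a factor $\sum_{m\ge1}m^{-1}\ll\log X$ and absorbed into the polylogarithm. Multiplying by $X^{\theta/2}T\log^2T$ gives a bad-set contribution $\ll X^{1/2+7\theta/6}\fr(X)^{-1}\log^2T$, in which the exponent $7\theta/6=\theta/2+2\theta/3$ appears exactly as the crude bound $X^{\theta/2}T$ on $B$ times the window width $X^{2\theta/3}/T$ of case~3; under $T>X^{1/2+7\theta/6}/\fr(X)$ this is $\ll T\log^2T$, and \eqref{spec.sum.unsmoothed estimate2} follows.

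The main obstacle is this bad-set estimate: one must control how frequently $\fr(X_\xi)$ dips below the case-4 threshold and do so while tracking the decay of $\hat g$. The delicate point is that the crossing nearest the origin, where $|\hat g|$ is largest ($\asymp X^{1/2}/\fr(X)$), is precisely the one producing the critical factor $1/\fr(X)$; verifying that this crossing still lies at scale $\gg1/T$, so that $|\hat g|\ll1/|\xi|$ rather than $|\hat g|\asymp T$, is exactly what the strength of the hypothesis $T>X^{1/2+7\theta/6}/\fr(X)$ is tailored to secure. The remaining accounting of logarithmic factors is routine, and a sharper treatment using the decaying case-3 bound of \eqref{S(X,T,fr(X))def1} in place of the crude $X^{\theta/2}T$ would in fact lower the threshold, so the displayed argument is comfortably within reach.
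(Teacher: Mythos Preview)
Your proposal is correct and follows essentially the same route as the paper. Both arguments feed Theorem~\ref{thm:4estimates} into \eqref{spec.sum to spec.sum.exp}; for \eqref{spec.sum.unsmoothed estimate1} both use the uniform majorant $S\le\max(X^{1/4+\theta/2}T^{1/2},X^{\theta/2}T)$ together with $\int_{-1}^1|\hat g|\ll\log T$; for \eqref{spec.sum.unsmoothed estimate2} both split the $\xi$-range into a good set where case~4 of \eqref{S(X,T,fr(X))def1} applies and a bad set that is a union of $\asymp X^{1/2}$ short intervals around the zeros of $\fr(X_\xi)$, verify that none of these intervals meets the plateau $|\xi|\le 1/T$ of $\hat g$, and observe that the crossing nearest the origin (at distance $\asymp\fr(X)/X^{1/2}$) dominates. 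The only cosmetic differences are that the paper changes variable to $Z=X_\xi$ and carries a free threshold $\delta$ optimized at the end to $\delta=\fr(X)X^{-\theta/2}$, whereas you work directly in $\xi$ with the threshold fixed at the case-4 boundary $\eta=X^{1/2+2\theta/3}/T$; both choices lead to the same hypothesis $T>X^{1/2+7\theta/6}/\fr(X)$. One small remark: your good-set term is already $T\log^3T$, so the final bound you actually obtain is $T\log^3T$, not $T\log^2T$; the paper's proof in fact produces the same power of logarithm.
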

\begin{proof}
To prove the theorem we need to substitute the bound \eqref{spec.sum estimate1} to equation \eqref{spec.sum to spec.sum.exp}. Consequently,
\begin{equation}\label{spec.sum.unsmoothed estimate by integral}
\sum_{t_j\le T}X^{it_j}\ll \log^{2}T
\int_{-1}^1|\hat{g}(\xi)|S(X\exp(-2\pi\xi),T,\fr(X\exp(-2\pi\xi)))d\xi.
\end{equation}
Applying the estimate
\begin{equation}\label{S(X,T,fr(X))estimate}
S(X,T,\fr(X))\le\max(X^{1/4+\theta/2}T^{1/2},X^{\theta/2}T)
\end{equation}
that follows from \eqref{S(X,T,fr(X))def1}, we immediately obtain \eqref{spec.sum.unsmoothed estimate1}.
Estimate \eqref{spec.sum.unsmoothed estimate2} can be proved using the fact that
\begin{equation}\label{S(X,T,fr(X))estimate2}
S(X,T,\fr(X))=T\quad \hbox{if}\quad X^{1/2+2\theta/3}/\fr(X)<T.
\end{equation}
To this end, we decompose the integral \eqref{spec.sum.unsmoothed estimate by integral} into two parts. The first one is over $\xi$ such that $\fr(X\exp(-2\pi\xi))\le \delta.$ To estimate this integral we will apply \eqref{S(X,T,fr(X))estimate}. To estimate the second integral we will use \eqref{S(X,T,fr(X))estimate2}. According to these estimates, we will choose an optimal value of parameter $\delta$.

First, making the change of variables we have
\begin{equation}\label{spec.sum.unsmoothed estimate by integral2}
\sum_{t_j\le T}X^{it_j}\ll\frac{\log^{2}T}{X}
\int_{X\exp(-2\pi)}^{X\exp(2\pi)}\left|\hat{g}\left(\frac{\log(Z/X)}{2\pi}\right)\right|S(Z,T,\fr(Z))dZ.
\end{equation}
Denote  $Z_n=(n+\sqrt{n^2-4})^2/4$ and let
\begin{equation*}
Z_{n_1-1}<X\exp(-2\pi)\le Z_{n_1},\quad Z_{n_2}\le X\exp(2\pi)<Z_{n_2+1}.
\end{equation*}
Consequently,
\begin{multline*}
\sum_{t_j\le T}X^{it_j}\ll
\frac{\log^{2}T}{X}\int_I\left|\hat{g}\left(\frac{\log(Z/X)}{2\pi}\right)\right|S(Z,T,\fr(Z))dZ
\\+\frac{\log^{2}T}{X}
\sum_{n_1\le n\le n_2}
\int_{|Z-Z_n|\le\delta\sqrt{Z_n}}\left|\hat{g}\left(\frac{\log(Z/X)}{2\pi}\right)\right|S(Z,T,\fr(Z))dZ,
\end{multline*}
where
\begin{equation*}
I=[X\exp(-2\pi),X\exp(2\pi)]\backslash\bigcup_{n_1\le n\le n_2}[Z_n-\delta\sqrt{Z_n},Z_n+\delta\sqrt{Z_n}].
\end{equation*}
For $Z\in I$ one has $\fr(Z)\gg\delta.$ To estimate the integral over $I$ we use \eqref{S(X,T,fr(X))estimate2}. Bounds for the remaining integrals rely on \eqref{S(X,T,fr(X))estimate}.  Finally, we obtain for $T> X^{1/2+2\theta/3}/\delta$
\begin{multline}\label{spec.sum.unsmoothed estimate by integral3}
\sum_{t_j\le T}X^{it_j}\ll T\log^{3}T+\\
\frac{TX^{\theta/2}\log^{2}T}{X}\sum_{n_1\le n\le n_2}
\int_{|Z-Z_n|\le\delta\sqrt{Z_n}}\left|\hat{g}\left(\frac{\log(Z/X)}{2\pi}\right)\right|dZ.
\end{multline}
It follows from \eqref{g(x) estimate} that
\begin{equation*}\label{interval with big g}
\left|\hat{g}\left(\frac{\log(Z/X)}{2\pi}\right)\right|\ll T \quad\hbox{when}\quad |Z-X|\ll\frac{X}{T}.
\end{equation*}
Note  that if
\begin{equation}\label{conditions on fr(X)}
\fr(X)\gg\delta\quad\hbox{and}\quad \fr(X)X^{1/2}\gg\frac{X}{T},
\end{equation}
then the interval $|Z-X|\ll\frac{X}{T}$  does not intersect with
\begin{equation*}
\bigcup_{n_1\le n\le n_2}[Z_n-\delta\sqrt{Z_n},Z_n+\delta\sqrt{Z_n}].
\end{equation*}
Thus applying \eqref{g(x) estimate} we obtain
\begin{equation}\label{estimate on nth integral}
\int_{|Z-Z_n|\le\delta\sqrt{Z_n}}\left|\hat{g}\left(\frac{\log(Z/X)}{2\pi}\right)\right|dZ\ll
X\int_{|X\exp(y)-Z_n|\le\delta\sqrt{Z_n}}\frac{dy}{y}.
\end{equation}
Let $Z_{n_0}$ be the nearest $Z_n$ to $X$ and let $Y_j=Z_{n_0+j}.$  Then
\begin{multline}\label{sum over n interval to sum over j interval}
\sum_{n_1\le n\le n_2}
\int_{|Z-Z_n|\le\delta\sqrt{Z_n}}\left|\hat{g}\left(\frac{\log(Z/X)}{2\pi}\right)\right|dZ \\ \ll
\sum_{|j|\ll X^{1/2}}
X\int_{|X\exp(y)-Y_j|\le\delta\sqrt{Y_j}}\frac{dy}{y}.
\end{multline}
The interval of integration in \eqref{sum over n interval to sum over j interval} is equal to
\begin{equation}\label{jth interval}
\log\left(\frac{Y_j}{X}-\frac{\delta\sqrt{Y_j}}{X}\right)\le y\le
\log\left(\frac{Y_j}{X}+\frac{\delta\sqrt{Y_j}}{X}\right).
\end{equation}
Since  the point $Y_j$ is very close to the point
$X\pm\fr(X)\sqrt{X}+j\sqrt{X}$,  the interval has the following form
\begin{equation}\label{jth interval2}
\log\left(1+\frac{j\pm\fr(X)-\delta}{X^{1/2}}\right)\le y\le
\log\left(1+\frac{j\pm\fr(X)+\delta}{X^{1/2}}\right).
\end{equation}
Using the fact that for  a small $\alpha$
\begin{equation*}
\int_{\log a}^{\log(a+\alpha)}\frac{dy}{y}\ll\frac{\alpha}{\log a}
\end{equation*}
and applying estimate \eqref{conditions on fr(X)}, we prove
\begin{equation}\label{estimate on jth intagral}
\int_{|X\exp(y)-Y_j|\le\delta\sqrt{Y_j}}\frac{dy}{y}\ll\frac{\delta}{|j+\fr(X)|}.
\end{equation}
Substituting \eqref{estimate on jth intagral} to \eqref{sum over n interval to sum over j interval} yields
\begin{equation}\label{sum over n interval estimate}
\sum_{n_1\le n\le n_2}
\int_{|Z-Z_n|\le\delta\sqrt{Z_n}}\left|\hat{g}\left(\frac{\log(Z/X)}{2\pi}\right)\right|dZ\ll
\frac{X\delta}{\fr(X)}\log X.
\end{equation}
Substituting \eqref{sum over n interval estimate} to \eqref{spec.sum.unsmoothed estimate by integral3} we obtain
\begin{equation}
\sum_{t_j\le T}X^{it_j}\ll T\log^{3}T+
\frac{TX^{\theta/2}\log^{3}T}{\fr(X)}\delta
\end{equation}
under conditions \eqref{conditions on fr(X)} and for $T> X^{1/2+2\theta/3}/\delta$. Choosing $\delta=\fr(X)X^{-\theta/2}$, we conclude that  \eqref{conditions on fr(X)} is satisfied under assumptions of the lemma. This completes the proof of \eqref{spec.sum.unsmoothed estimate2}.
\end{proof}
\section*{Acknowledgements}
We thank the referee for many helpful suggestions.


\nocite{}

\end{document}